\tikzstyle{directed}=[postaction={decorate,decoration={markings,
\tikzstyle{reverse directed}=[postaction={decorate,decoration={markings,
\newenvironment{cmv}{\noindent\color{violet}}{}
\renewcommand*\env@matrix[1][*\c@MaxMatrixCols c]{%
  \hskip -\arraycolsep
  \let\@ifnextchar\new@ifnextchar
  \array{#1}}
\newenvironment{cm}{\noindent\color{blue}}{} 
\newcommand{\comment}[1]
   {\ifthenelse{\equal{\showcomments}{yes}}
	     {\footnotemark\marginpar{\sffamily{\tiny
		   \addtocounter{footnote}{-1}\footnotemark#1
}\normalfont}}{}}
\newcommand{\NP}{\ensuremath{\mathsf{NP}}\xspace} %
\newcommand{\LOGSPACE}{\ensuremath{\mathsf{LOGSPACE}}\xspace} %
\newcommand{\NL}{\ensuremath{\mathsf{NL}}\xspace} %
\newcommand{\ACC}{\ensuremath{\mathsf{ACC}^0}\xspace} %
\renewcommand{\P}{\ensuremath{\mathsf{P}}\xspace}
\newcommand{\showcomments}{yes}
\newcommand*{\DecProblem}[1]{{\textsc{#1}}\xspace}
\newcommand{\DPSE}[1]{\DecProblem{SphEq}(#1)}
\newcommand{\bel}[1]{\begin{equation}\label{#1}}
\newcommand{\ee}{\end{equation}}
\newcommand{\Z}{\mathbb{Z}}
\newcommand{\N}{\mathbb{N}}
\newcommand{\LBA}{\left\{\begin{array}}
\newcommand{\EAR}{\end{array}\right.}
\newcommand{\UT}[2]{\operatorname{UT}(#1,#2)}
\newcommand{\SL}[2]{\operatorname{SL}(#1,#2)}
\newcommand{\GL}[2]{\operatorname{GL}(#1,#2)}
\newcommand{\TL}[2]{\operatorname{T}(#1,#2)}
\newcommand{\ET}[2]{\operatorname{ET}(#1,#2)}
\def\CG{{\mathcal G}}
\newcommand{\gen}[1]{\left< \mathinner{#1} \right>}
\newcommand{\Gen}[2]{\left< \mathinner{#1} \mid \mathinner{#2}\right>}
\newcommand{\abssmall}[1]{\lvert\mathinner{#1}\rvert}
\newcommand{\interval}[2]{[ \mathinner{#1}..\mathinner{#2}] }
\newcommand{\compproblem}[3][]{%
	\par\vspace{0.125cm plus 0.1cm minus 0.05cm}\adjustbox{valign=t}{\begin{tabularx}{\linewidth-2\parindent}{@{}lX}%
			\if\relax\detokenize{#1}\relax%
			\else%
			\textnormal{\textsf{Constant:}}&#1\\%
			\fi%
			\textnormal{\textsf{Input:}}&#2\\%
			\textnormal{$\mathrlap{\textsf{Output:}}\hphantom{\textsf{Question:}}$\!\!}&#3\\%
	\end{tabularx}}\vspace{0.125cm plus 0.1cm minus 0.05cm}\par%
}
\newcommand{\ynproblem}[3][]{%
	\par\vspace{0.125cm plus 0.1cm minus 0.05cm}\adjustbox{valign=t}{\begin{tabularx}{\linewidth-2\parindent}{@{}lX}%
			\if\relax\detokenize{#1}\relax%
			\else%
			\textnormal{\textsf{Constant:}}&#1\\%
			\fi%
			\textnormal{\textsf{Input:}}&#2\\%
			\textnormal{\textsf{Question:}}&#3\\%
	\end{tabularx}}\vspace{0.125cm plus 0.1cm minus 0.05cm}\par%
}
\newcommand{\prblminput}[2][]{%
	\par\vspace{0.125cm plus 0.1cm minus 0.05cm}\adjustbox{valign=t}{\begin{tabularx}{\linewidth-\parindent}{@{}lX}%
			\if\relax\detokenize{#1}\relax%
			\else%
			\textnormal{\textsf{Constant:}}&#1\\%
			\fi%
			\textnormal{\textsf{Input:}}&#2%
		\end{tabularx}}
}
\newcommand{\abs}[1]{\left|{}#1\right|}
\newcommand{\Set}[2]{\left\{\, #1 \;\middle|\; #2 \,\right\}}
\newcommand{\oneset}[1]{\left\{\, #1 \,\right\}}
\newcommand{\cH}{\mathcal{H}}
\newcommand{\cC}{\mathcal{C}}
\newcommand{\cG}{\mathcal{G}}
\newcommand{\cP}{\mathcal{P}}
\newcommand{\prm}{\mathbb{P}}
\newcommand{\Oh}{\mathcal{O}}
\def\MN{{\mathbb{N}}}
\def\MZ{{\mathbb{Z}}}
\newcommand{\sgn}[1]{\mathrm{sgn}(#1)}
\DeclareMathOperator{\tr}{{tr}}
\DeclareMathOperator{\Aut}{{Aut}}
\title{Complexity of Spherical Equations in Finite Groups}
\author{Caroline Mattes\inst{1}, Alexander Ushakov\inst{2}, Armin Wei\ss\inst{1}}
\institute{
	Universität Stuttgart, Institut für Formale Methoden der Informatik, Germany\and
 Stevens Institute of Technology, Hoboken, USA
 }
\date{\today}
\begin{document}

\maketitle

\begin{abstract}
In this paper we investigate computational properties
of the Diophantine problem for spherical equations
in some classes of finite groups.
We classify the complexity of different variations of the problem, 
e.g., when $G$ is fixed and when $G$ is a part of the input. 

When the group $G$ is constant or given as multiplication table, we show that the problem always can be solved in polynomial time. On the other hand, for the permutation groups $S_n$ (with $n$ part of the input), the problem is \NP-complete. 
The situation for matrix groups is quite involved: while we exhibit sequences of 2-by-2 matrices where the problem is \NP complete, in the full group $\GL{2}{p}$ ($p$ prime and part of the input) it can be solved in polynomial time.
We also find a similar behaviour with subgroups of matrices of arbitrary dimension over a constant ring.
\medskip
\\
\noindent
\textbf{Keywords.}
Diophantine problem, finite groups, matrix groups,
spherical equations, complexity, NP-completeness.
\\
\noindent
\textbf{2010 Mathematics Subject Classification.} 
20F10, 68W30.
\end{abstract}

\section{Introduction}

The study of equations has a long history in all of mathematics. 
Some of the first explicit decidability results in group theory are due to Makanin
\cite{mak84}, who showed that equations over free groups are decidable.
Let $F(Z)$ denote the free group on countably many generators 
$Z = \{z_i\}_{i=1}^\infty$. For a group $G$, an 
\emph{equation over $G$ with variables in $Z$} is a formal equality
of the form $W = 1$, where 
\[
W = z_{i_1}g_1\cdots z_{i_k}g_k \in F(Z) \ast G, 
\mbox{ with  $z_{i_j}\in Z$ and $g_j\in G$}
\]
and $\ast$ denotes the free product.
We refer to $\{z_{i_1},\ldots,z_{i_k}\}$ as the set of \emph{variables} and to $\{g_1,\ldots,g_k\}$ as the set of 
\emph{constants} (or \emph{coefficients}) of $W$.
We occasionally write $W(z_1,\ldots,z_k)$ to indicate that the variables in $W$ are precisely $z_1,\ldots,z_k$.
A \emph{solution} to an equation $W(z_1,\ldots,z_k)=1$ over $G$ is an
assignment to the variables $z_1,\ldots,z_k$ that makes $W(z_1,\ldots,z_k)=1$ true.
The \emph{Diophantine problem} (also called \emph{equation satisfiability} or \emph{polynomial satisfiability} problem) in a group $G$ for a class of equations $C$ 
is an algorithmic question to decide if a given equation $W=1$
in $C$ has a solution, or not.

One class of equations over groups that has generated much interest 
is the class of \emph{quadratic} equations: equations where each 
variable $z$ appears exactly twice (as either $z$ or $z^{-1}$). 
The relationship between quadratic equations and compact surfaces sparked the initial interest in their study.
It was observed in \cite{Culler:1981} and \cite{Schupp:1980} 
in the early 80s that such equations have an affinity with the theory of compact surfaces (for instance, via their associated van Kampen diagrams). 
This geometric point of view has led to many interesting results,
particularly in the realm of quadratic equations over free groups: 
solution sets were studied  in \cite{Grigorchuk_Kurchanov:1992}, 
$\NP$-completeness was proved in 
\cite{Diekert-Robson:1999,Kharlampovich-Lysenok-Myasnikov-Touikan:2010}.
Systems of quadratic equations
played an important role in the study of the first order theory of free groups 
(Tarski problem, \cite{Kharlampovich_Myasnikov:1998(1)}).
These results stimulated more interest in computational properties
of quadratic equations in various classes of (infinite) groups
such as hyperbolic groups 
(solution sets were described in \cite{Grigorchuk-Lysionok:1992},
$\NP$-completeness was proved in \cite{Kharlampovich-Taam:2017}),
the first Grigorchuk group
(decidability was proved in \cite{Lysenok-Miasnikov-Ushakov:2016},
commutator width was computed in \cite{Bartholdi-Groth-Lysenok:2022}),
free metabelian groups
($\NP$-hardness was proved in \cite{Lysenok-Ushakov:2016},
membership to $\NP$ for orientable equations was 
proved in\cite{Lysenok-Ushakov:2021}),
metabelian Baumslag--Solitar groups
($\NP$-completeness was proved in \cite{Mandel-Ushakov:2023}), etc.

Let us introduce some terminology related to quadratic equations.
We say that equations $W = 1$ and $V=1$ are \emph{equivalent} 
if there is an automorphism $\phi\in \Aut(F(Z)\ast G)$ such that $\phi$ is the identity on $G$ and $\phi(W) = V$. It is a well known consequence of the classification of compact surfaces that any quadratic equation over $G$ is equivalent, via an automorphism $\phi$ computable in time $O(|W|^2)$, to an equation in exactly one of the following three standard forms (see, 
\cite{Comerford_Edmunds:1981,Grigorchuk_Kurchanov:1992}):
\begin{align}
\prod_{j=1}^k z_j^{-1} c_j z_j&=1, &&&(k\ge 1),\label{eq:spherical} \\
\prod_{i=1}^g [x_i,y_i]\prod_{j=1}^k z_j^{-1} c_j z_j&=1, &
\prod_{i=1}^g x_i^2    \prod_{j=1}^k z_j^{-1} c_j z_j&=1, &(g\geq 1, k\geq 0).
\label{eq:orientable}
\end{align}

The number $g$ is the \emph{genus} of the equation, and both $g$ and $k$ 
(the number of constants) are invariants. 
The standard forms are called, respectively, 
\emph{spherical} (\ref{eq:spherical}), 
\emph{orientable of genus $g$} and 
\emph{non-orientable of genus $g$}
(\ref{eq:orientable}) according to the compact surfaces they correspond to. 

In this paper we investigate spherical equations in finite groups. 
Let us remark at this point that spherical equations naturally generalize fundamental (Dehn) problems of group theory such as the word and the conjugacy problem if we allow that the constants are given as words over the generators.
The complexity of solving equations in finite groups has been first studied by Goldmann and Russell showing that the Diophantine problem in a fixed finite nilpotent group can be decided in polynomial time, while it is \NP-complete in every finite non-solvable group. 
For some recent results, see \cite{FoldvariH20,IdziakKKW22}.

\paragraph{Contribution.} Here we study the complexity of solving spherical equations in finite groups both in the case that the group is fixed as well as that it is part of the input. In the latter case, we consider various different input models as well as different classes of groups. In particular we show (here $\DPSE{\cG}$ denotes the Diophantine problem for the class $\cG$ of groups):
\begin{itemize}
    \item In a fixed group the satisfiable spherical equations form a regular language with commutative syntactic monoid~-- in particular, the Diophantine problem can be decided in linear time (\cref{thm: finitegroup}).
    \item If the Cayley table is part of the input, we can solve the Diophantine problem in nondeterministic logarithmic space and, thus, in \P (\cref{prop:CayNL}).
    \item For (symmetric) groups given as permutation groups the problem is \NP-complete (\cref{thm: DPforSn}).
    \item For two-by-two matrices we prove the following: 
     \begin{itemize}
 \item \label{ETNPcompl}  $\DPSE{(\ET{2}{n})_{n \in \N}}$ is $\NP$-complete (here, $\ET{2}{n}$ denotes the upper triangular matrices with diagonal entries in $\{\pm1\}$, \cref{lem: DninTwo}). 
 \item \label{TLNOcompl}  $\DPSE{(\TL{2}{p})_{p \in \prm}}$ is in $\P$   (here, $\TL{2}{p}$ are the upper triangular matrices and $\prm$ the set of primes, \cref{thm:TinP}). 
 \item  \label{GLNOcompl} $\DPSE{(\GL{2}{p})_{p \in \prm}}$ is in $\P$ (\cref{thm: GLinP}). 
 \end{itemize}

    \item $\DPSE{(\UT{4}{p})_{p \in \prm}}$ is in \P where $\UT{4}{p}$ denotes the  uni-triangular 4-by-4 matrices modulo $p$ (\cref{thm:UTfour}).
    \item $\DPSE{(H_n^{(p)})_{(n,p) \in \N \times \prm}}$ is in \P (here $H_n^{(p)}$ denotes the $n$-dimensional discrete Heisenberg group modulo $p$~-- note that the dimension and the prime field are both part of the input, \cref{thm:heisenberg}).
    \item For every constant $m \geq 5$, we exhibit sequences of subgroups $G_n'\leq H_n\leq G_n \leq \GL{n}{m}$ such that $\DPSE{(G_n')_{n \in \N}}$ and $\DPSE{(G_n)_{n \in \N}}$ are in \P but $\DPSE{(H_n)_{n \in \N}}$ is \NP-complete (\cref{prop: NPhardSubclass }). In particular, \NP-hardness does not transfer to super- nor sub-groups!
\end{itemize}

\section{Notations and problem description}\label{sec:NotProblm}
For some finite alphabet $\Sigma$, we denote the set of words over $\Sigma$ by $\Sigma^*$.
By $\prm$ we denote the set of all prime numbers. With $\interval{k}{\ell}$ for $k,\ell \in \Z$ we denote the interval of integers $\{k, \dots, \ell\}$.

\subsection{Notations from group theory}

 We assume the reader to be familiar with the basics of group theory. 
In this paper, we mostly work with  finite permutation or matrix groups, but in some cases we also use other forms of presentations, for example for the dihedral groups. Remember that we can find every finite group $G$ as a subgroup of some permutation or  matrix group (\cite [(7.4) and (9.13)]{SuzukiGTI81}).

A group can also be presented by its \emph{Cayley table} (or \emph{multiplication table}). The Cayley table of a finite group $G$ is a square matrix with rows and columns indexed by the elements of $G$. Row $g$ and column $h$ contains the product $k = gh \in G$ (i.e.\ the Cayley table is the full description of the group multiplication $G \times G \to G$). 

We will further use the following notations: Let $G$ be a group and $g_1, \ldots, g_\ell \in G$. Then, $\gen{g_1, \ldots, g_\ell}$ denotes the subgroup of $G$ generated by $g_1, \ldots, g_\ell$. 
For $g \in G$ we say that $g$ is conjugated to $h$ in $G$ if there exists $z \in G$ such that $z^{-1}gz=h$. In this case we write $g^z=h$. 

By $\Z_n$ we denote the ring $\Z/n\Z$ of residue classes modulo $n$. As a group we consider $\Z_n$ as the additive cyclic group of order $n$. We also denote the cyclic group of order two by $C_2$ when we write it multiplicatively as $\{1,-1\}$.

\newcommand{\supp}[1]{\mathrm{mov}(#1)}
\newcommand{\Ssupp}[1]{\mathrm{smov}(#1)}
\newcommand{\cyc}[1]{\mathrm{Cyc}(#1)}

\paragraph*{Permutation groups.}

The symmetric group on $n$ elements, denoted by $S_n$, is the group of order $n!$ of all bijective maps (aka. \emph{permutations}) on the set $\interval{1}{n}$. 
There are many ways to represent the elements of $S_n$. We will use the presentation  based on \emph{cycles}. A cycle is a tuple $(i_1, \ldots, i_k)$ with $i_j$ being pairwise disjoint numbers in $\interval{1}{n}$. As a function, such a cycle maps $i_j \mapsto i_{j+1}$ and $i_k \mapsto i_1$. We can write each element in $S_n$ as a product of cycles (i.e. the composition of the functions described by these cycles).

 Let $\sigma, \sigma_1, \ldots, \sigma_k  \in S_n$. Then we define 
\begin{enumerate}[(a)]
    \item $\supp{\sigma}= \Set{i \in \interval{1}{n}}{\sigma(i)\neq i} $
    \item $ \Ssupp{\sigma_1, \ldots, \sigma_k}=\sum_{i=1}^k \abs{\supp{\sigma_i}} $
\end{enumerate}
 We say that two cycles $\sigma_1,\sigma_2 \in S_n$ are disjoint, if $\supp{\sigma_1}\cap \supp{\sigma_2}=\emptyset$.

\begin{lemma}[\cite{AschbacherFGT}, (15.2)] \label{lem: DisjCycSn}
\begin{enumerate}[(a)]
    \item Disjoint cycles in $S_n$ commute. 
    \item \label{UDisDec}  Every element in $S_n$ can be written as the product of disjoint cycles in a unique way (up to the order of the cycles). This is called the disjoint cycle representation.
\end{enumerate}
\end{lemma}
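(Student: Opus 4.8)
The plan is to identify cycles with the restrictions of $\sigma$ to the non-trivial orbits of the cyclic group $\gen{\sigma}$ acting on $\interval{1}{n}$, and to read off both parts from this correspondence. At the outset I would fix the harmless convention that the \emph{disjoint cycle representation} uses only non-trivial cycles (length $\geq 2$): trivial cycles all equal the identity, so allowing them would make the word ``unique'' false in a trivial way.

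For part (a) I would just compare the two functions $\sigma_1\sigma_2$ and $\sigma_2\sigma_1$ pointwise. Note first that $\supp{\sigma_i}$ is invariant under $\sigma_i$, since a cycle permutes its support cyclically and fixes everything else. So for $x \in \interval{1}{n}$: if $x \in \supp{\sigma_1}$ then $x$ and $\sigma_1(x)$ both lie outside $\supp{\sigma_2}$, hence $\sigma_1\sigma_2(x) = \sigma_1(x) = \sigma_2\sigma_1(x)$; the case $x \in \supp{\sigma_2}$ is symmetric; and if $x$ lies in neither support both composites fix $x$. A short induction extends commutativity to any finite family of pairwise disjoint cycles, so a product of pairwise disjoint cycles is well defined independently of the order of its factors.

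For existence in part (b) I would let $\gen{\sigma}$ act on $\interval{1}{n}$ and take the induced partition into orbits. For an orbit $O$ with $\abs{O}\geq 2$, choosing any $x\in O$ gives $O=\{x,\sigma(x),\dots,\sigma^{\abs{O}-1}(x)\}$ with these elements pairwise distinct, so $c_O:=(x,\sigma(x),\dots,\sigma^{\abs{O}-1}(x))$ is well defined up to cyclic rotation and has $\supp{c_O}=O$. Distinct non-trivial orbits give cycles with disjoint supports, hence by (a) the product $\prod_O c_O$ is order-independent, and it equals $\sigma$ pointwise: for $x$ in a non-trivial orbit $O$ only $c_O$ moves $x$ and $c_O(x)=\sigma(x)$, while for $x$ in a trivial orbit both sides fix $x$. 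For uniqueness, suppose $\sigma=d_1\cdots d_s$ with the $d_j$ pairwise disjoint non-trivial cycles. The key point is that $\supp{d_j}$ is $\sigma$-invariant with $\sigma|_{\supp{d_j}}=d_j|_{\supp{d_j}}$: if $y\in\supp{d_j}$ then the $d_i$ with $i>j$ fix $y$, then $d_j(y)\in\supp{d_j}$, then the $d_i$ with $i<j$ fix $d_j(y)$, so $\sigma(y)=d_j(y)$. Iterating, the $\sigma$-orbit of any $x\in\supp{d_j}$ is exactly $\supp{d_j}$, and $d_j$ restricted there coincides with the cycle $c_{\supp{d_j}}$ built above. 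Since the supports $\supp{d_j}$ are disjoint and their union is $\supp{\sigma}$, which is the union of the non-trivial $\gen{\sigma}$-orbits, each non-trivial orbit occurs as exactly one $\supp{d_j}$; this gives a bijection $d_j\leftrightarrow c_{\supp{d_j}}$, which is uniqueness up to reordering.

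I do not expect a real obstacle: this is the classical orbit argument, and everything reduces to pointwise checks of equalities of permutations. The only step needing care is the bookkeeping in the uniqueness argument, namely verifying that every cycle in an arbitrary disjoint decomposition is recovered as the restriction of $\sigma$ to one of its orbits.
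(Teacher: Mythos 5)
Your proof is correct and is the standard orbit-based argument: pointwise disjointness checks for part (a), and the identification of cycles with restrictions of $\sigma$ to the non-trivial orbits of $\gen{\sigma}$ for existence and uniqueness in part (b). The paper does not supply its own proof of this lemma but cites Aschbacher (15.2), where the argument is essentially the one you give, so there is nothing to compare beyond noting that your write-up fills in the citation correctly.
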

We say that $\cyc{\sigma}=[j_1, \ldots, j_\ell]$ is the cycle structure of $\sigma$ if it is the product of $\ell$ disjoint cycles of lengths $j_1, \ldots, j_\ell$ and denote by $\abs{\cyc{\sigma}}=\ell$ the length of this cycle decomposition. Observe that the $j_i$ do not have to be pairwise disjoint. 

\begin{lemma}\label{lem: SumSuppSn} 
Let $\sigma_1,\sigma_2 \in S_n$.  Then, $\abs{\supp{\sigma_1\sigma_2}}\leq \Ssupp{\sigma_1, \sigma_2}$. Moreover,  $\supp{\sigma_1}\cap \supp{\sigma_2}\neq \emptyset$ if and only if $\abs{\supp{\sigma_1\sigma_2}} < \Ssupp{\sigma_1, \sigma_2}$. 
\end{lemma}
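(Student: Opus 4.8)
The plan is to localize everything to the supports: I would first establish the set inclusion $\supp{\sigma_1\sigma_2}\subseteq\supp{\sigma_1}\cup\supp{\sigma_2}$, and then determine exactly when the two cardinality estimates it produces are strict.

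First I would prove the containment. If a point $i\in\interval{1}{n}$ is fixed by both $\sigma_1$ and $\sigma_2$, then it is fixed by $\sigma_1\sigma_2$ as well; this holds irrespective of the convention for composing permutations, since $\sigma_1(i)=\sigma_2(i)=i$ forces $(\sigma_1\sigma_2)(i)=i$. Contraposing gives $\supp{\sigma_1\sigma_2}\subseteq\supp{\sigma_1}\cup\supp{\sigma_2}$, whence $\abs{\supp{\sigma_1\sigma_2}}\le\abs{\supp{\sigma_1}\cup\supp{\sigma_2}}\le\abs{\supp{\sigma_1}}+\abs{\supp{\sigma_2}}=\Ssupp{\sigma_1,\sigma_2}$, which is the first assertion.

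For the second assertion, the point is that $\abs{\supp{\sigma_1\sigma_2}}<\Ssupp{\sigma_1,\sigma_2}$ holds precisely when at least one of the two $\le$'s above is strict. If $\supp{\sigma_1}\cap\supp{\sigma_2}\ne\emptyset$, then inclusion--exclusion makes the second inequality strict, so the strict bound holds. For the converse I would show that disjoint supports force equality throughout, i.e. $\supp{\sigma_1\sigma_2}=\supp{\sigma_1}\cup\supp{\sigma_2}$ with the union disjoint. The one ingredient I would use is that $\supp{\sigma_j}$ is $\sigma_j$-invariant: if $\sigma_j$ fixes $i$ then it fixes $\sigma_j(i)$, so $\sigma_j$ maps its complement into itself and hence maps $\supp{\sigma_j}$ onto itself. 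Granting this, for $i\in\supp{\sigma_1}$ (the case $i\in\supp{\sigma_2}$ being symmetric), disjointness gives $\sigma_2(i)=i$ and $\sigma_1(i)\in\supp{\sigma_1}$, hence $\sigma_1(i)\notin\supp{\sigma_2}$ and $\sigma_2(\sigma_1(i))=\sigma_1(i)$; in either composition convention this yields $(\sigma_1\sigma_2)(i)\ne i$, i.e. $i\in\supp{\sigma_1\sigma_2}$. Combined with the inclusion from the first part this gives $\supp{\sigma_1\sigma_2}=\supp{\sigma_1}\cup\supp{\sigma_2}$, and disjointness of the union then makes $\abs{\supp{\sigma_1\sigma_2}}=\Ssupp{\sigma_1,\sigma_2}$. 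Contraposing, a strict inequality forces $\supp{\sigma_1}\cap\supp{\sigma_2}\ne\emptyset$.

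I do not expect any real obstacle here: the statement is a sharpened union bound on supports. The only thing requiring a bit of care is the bookkeeping of the composition convention (``first $\sigma_1$'' versus ``first $\sigma_2$'') together with invoking the $\sigma_j$-invariance of $\supp{\sigma_j}$ at the right moment; once that is pinned down, each case is a one-line check.
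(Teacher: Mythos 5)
Your proof is correct and follows essentially the same approach as the paper's: establish $\supp{\sigma_1\sigma_2}\subseteq\supp{\sigma_1}\cup\supp{\sigma_2}$ for the first bound, then note that overlapping supports make the union strictly smaller, while disjoint supports force $\supp{\sigma_1\sigma_2}=\supp{\sigma_1}\cup\supp{\sigma_2}$ with additive cardinality. Your extra care with the composition convention via $\sigma_j$-invariance of $\supp{\sigma_j}$ is a useful detail the paper leaves implicit, but it is the same argument.
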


\begin{proof}
If $j \notin \supp{\sigma_1}\cup \supp{\sigma_2}$, then $\sigma_1(\sigma_2(j))=j$. Therefore, $ \supp{\sigma_1\sigma_2} \subseteq \supp{\sigma_1}\cup \supp{\sigma_2}$. If $\supp{\sigma_1}\cap \supp{\sigma_2}\neq \emptyset$, then $\abs{\supp{\sigma_1}\cup \supp{\sigma_2}}<\Ssupp{\sigma_1,\sigma_2}$. If $\supp{\sigma_1}\cap \supp{\sigma_2}= \emptyset$, then $j \in \supp{\sigma_1} $ implies $\sigma_2(j)=j$ and vice versa. Thus, $ \supp{\sigma_1\sigma_2}=\supp{\sigma_1}\cup \supp{\sigma_2}$ and the lemma follows.  
 \qed\end{proof}

\begin{lemma}[\cite{AschbacherFGT}, (15.3)]\label{lem: ConjClassSn}
Let $\sigma, \tau \in S_n$. Then $\sigma$ is conjugated to $\tau$ in $S_n$ if and only if $\cyc{\sigma}=\cyc{\tau}$ (up to a permutation of the $j_i$).  
\end{lemma}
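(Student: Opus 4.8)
The plan is to prove both implications using the standard formula describing how conjugation acts on cycle notation, together with the uniqueness of the disjoint cycle decomposition from \cref{lem: DisjCycSn}(b). The key computation to record first is that for any cycle $c = (a_1 \, a_2 \, \cdots \, a_k)$ in $S_n$ and any $z \in S_n$ one has
\[
z^{-1} c\, z = (\, z^{-1}(a_1)\ \ z^{-1}(a_2)\ \ \cdots\ \ z^{-1}(a_k)\,).
\]
This is a one-line check: the left-hand side sends $z^{-1}(a_m)$ to $z^{-1}(c(a_m)) = z^{-1}(a_{m+1})$ (indices read modulo $k$) and fixes every point not of the form $z^{-1}(a_i)$, which is exactly the action of the right-hand side. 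Two consequences: conjugation by $z$ sends a cycle to a cycle of the same length, and, since $z^{-1}$ is a bijection of $\interval{1}{n}$, it sends disjoint cycles to disjoint cycles.

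For the direction ``conjugate $\Rightarrow$ same cycle structure'': assume $\sigma^z = z^{-1}\sigma z = \tau$ for some $z \in S_n$ and write $\sigma = c_1 c_2 \cdots c_\ell$ as a product of disjoint cycles (\cref{lem: DisjCycSn}(b)). Then $\tau = (z^{-1}c_1 z)(z^{-1}c_2 z)\cdots (z^{-1}c_\ell z)$ is again a product of disjoint cycles whose lengths are exactly those of $c_1, \dots, c_\ell$. By uniqueness of the disjoint cycle decomposition, $\cyc{\tau} = \cyc{\sigma}$ up to reordering of the $j_i$.

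For the converse, suppose $\cyc{\sigma} = \cyc{\tau} = [j_1, \dots, j_\ell]$ after permuting the $j_i$ suitably. Write $\sigma$ and $\tau$ as products of disjoint cycles, this time also listing every fixed point as a length-$1$ cycle, so that each element of $\interval{1}{n}$ occurs exactly once; say $\sigma = \prod_r (a_1^{(r)} \, \cdots \, a_{j_r}^{(r)})$ and $\tau = \prod_r (b_1^{(r)} \, \cdots \, b_{j_r}^{(r)})$ with the $r$-th cycles of matching length. Define $z \colon \interval{1}{n} \to \interval{1}{n}$ by $z(b_m^{(r)}) = a_m^{(r)}$; this is a bijection because each point of $\interval{1}{n}$ appears exactly once among the $a$'s and exactly once among the $b$'s. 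Applying the key formula to each cycle of $\sigma$ and using $z^{-1}(a_m^{(r)}) = b_m^{(r)}$ yields $z^{-1}\sigma z = \tau$, i.e.\ $\sigma^z = \tau$.

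I expect the only delicate point to be bookkeeping rather than mathematics: one must fix the conventions for function composition and for conjugation (the paper uses $g^z = z^{-1}gz$ and left function composition) consistently when verifying the key formula, and one must remember to include all fixed points when constructing $z$ in the converse, since otherwise $z$ need not be a permutation of $\interval{1}{n}$.
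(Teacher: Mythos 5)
Your proof is correct, and it is essentially the standard textbook argument. The paper does not actually prove this lemma — it cites Aschbacher (15.3), and the proof there is precisely the one you give: derive the conjugation formula $z^{-1}(a_1,\ldots,a_k)z = (z^{-1}(a_1),\ldots,z^{-1}(a_k))$, use it together with uniqueness of the disjoint cycle decomposition for the forward direction, and construct the conjugator by matching up cycles (with fixed points listed as length-one cycles) for the converse. Your remarks about the composition convention and about listing fixed points are exactly the right points to flag; both are handled correctly.
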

We can write each element in $\sigma \in S_n$ as a product of cycles of length two, so-called \emph{transpositions} (\cite[15.4]{AschbacherFGT}). If $\sigma$ is a product of an even number of transpositions, then $\sigma$ is an \emph{even} permutation and we write $\sgn{\sigma}=1$. Otherwise, $\sigma$ is an \emph{odd} permutation and $\sgn{\sigma}=-1$. 

\begin{lemma}[\cite{AschbacherFGT}, (15.5)] \label{lem: OddaEvenPerm}
    A permutation is even (rsp. odd) if and only if it is the product of an even (rsp. odd) number of cycles of even length. 
\end{lemma}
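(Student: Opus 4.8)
The plan is to reduce the whole statement to writing a single cycle as a product of transpositions and then counting modulo~$2$. The starting point is the elementary identity
\[
(i_1\, i_2\, \cdots\, i_k) \;=\; (i_1\, i_k)(i_1\, i_{k-1})\cdots (i_1\, i_2),
\]
which one checks by evaluating both sides on each point $i_j$ and on points outside $\{i_1,\dots,i_k\}$ (where both sides act trivially). Hence a cycle of length $k$ is a product of exactly $k-1$ transpositions, and $k-1$ is odd exactly when $k$ is even; so, by the definition of the sign, a cycle is an odd permutation if and only if its length is even.

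Next I would take an arbitrary $\sigma\in S_n$ and write it in its disjoint cycle representation $\sigma=\gamma_1\cdots\gamma_\ell$ with $\gamma_i$ a cycle of length $j_i$, which exists by \cref{lem: DisjCycSn} (one may include or omit the fixed points as length-$1$ cycles; this is immaterial since $1$ is odd). Substituting the identity above into each factor writes $\sigma$ as a product of $\sum_{i=1}^\ell (j_i-1)$ transpositions. Since $j_i-1$ is odd precisely when $j_i$ is even, this yields the congruence
\[
\sum_{i=1}^\ell (j_i-1)\;\equiv\;\#\bigl\{\, i\in\interval{1}{\ell} : j_i \text{ is even}\,\bigr\}\pmod 2 .
\]
By the definition of the sign via the parity of the number of transpositions, $\sigma$ is even if and only if the right-hand side is even, i.e.\ if and only if the number of even-length cycles in the disjoint cycle decomposition of $\sigma$ is even, and $\sigma$ is odd in the remaining case. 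This is exactly the assertion.

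There is no real obstacle here; the only point that must be invoked rather than re-proved is that the parity of the number of transpositions in a factorization of $\sigma$ is well defined~-- equivalently, that the sign map $\sigma\mapsto\sgn{\sigma}$ is a homomorphism from $S_n$ onto $\{\pm1\}$~-- which is part of the setup around \cite[15.4]{AschbacherFGT}. Granting that, the same count in fact goes through verbatim for \emph{any} factorization of $\sigma$ into cycles, not merely the disjoint one, so the criterion does not depend on the chosen decomposition.
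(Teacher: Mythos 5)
Your argument is correct and is the standard textbook proof. Note, however, that the paper does not actually prove \cref{lem: OddaEvenPerm}: it is cited verbatim from \cite[15.5]{AschbacherFGT} with no argument given, so there is nothing in the paper to compare against. Your route -- write a $k$-cycle as a product of $k-1$ transpositions via $(i_1\, i_2\,\cdots\, i_k)=(i_1\, i_k)\cdots(i_1\, i_2)$, observe $k-1$ is odd iff $k$ is even, and then sum parities over the (disjoint or arbitrary) cycle factorization using that $\sgn{\cdot}$ is a homomorphism -- is exactly what one would find in Aschbacher or any comparable source, and the closing remark that the count is independent of the chosen cycle factorization is a correct and worthwhile observation. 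One small caveat worth keeping in mind: the displayed identity depends on the composition convention (under the opposite convention one gets the reversed product $(i_1\, i_2)(i_1\, i_3)\cdots(i_1\, i_k)$), but either way the number of transpositions is $k-1$, so the parity count is unaffected.
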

In particular, each $\sigma \in S_n$ is either odd or even. 
The set of all even permutations forms a normal subgroup of order $n!/2$ of $S_n$ usually denoted by $A_n$.

\paragraph*{Matrix groups.}\label{sec: matrices}
 The \emph{general linear group} $\GL{n}{p}$ over 
the prime field $\MZ_p$ is the group of all invertible $n$-by-$n$ matrices. Recall that a matrix $A$ is invertible if and only if its \emph{determinant} $\det(A)\neq 0$.
By $\SL{n}{p}$ we denote the \emph{special linear group} of all matrices $A \in \GL{2}{p}$ with $\det(A)=1$. Further, let $\TL{n}{p}$ be the subgroup of $\GL{n}{p}$ of all invertible upper triangular matrices. By $\ET{n}{p}$ we denote the subgroup of $\TL{n}{p}$ of all matrices having only $\pm 1$ on the diagonal (note that this is not a standard definition). Finally, let $\UT{n}{p}$ be the group of all matrices $A \in \TL{n}{p}$  having  only $1$ on the diagonal.
We denote the identity matrix by $I$ or $I_n$.
\newcommand{\partition}{\DecProblem{Partition}}
\newcommand{\threepart}{\DecProblem{3-Partition}}
\newcommand{\esc}{\DecProblem{3-ExactSetCover}}
\subsection{Complexity and \NP-complete problems}\label{sec:NP}
We assume the reader to be familiar with the complexity classes \NL (nondeterministic logarithmic space), \P and \NP and the concept of \NP-completeness (we use \NP-completeness with respect to \LOGSPACE reductions). 

We say a function or problem is in random polynomial time, if there is a randomized polynomial time algorithm computing the correct answer with a high probability. Since in our case  we always can check in (deterministic) polynomial time whether the answer is correct, we can modify the algorithm to obtain a polynomial time Las Vegas algorithm (meaning an algorithm which is always correct and the expected running time is polynomial).

It is well known that the following problems are \NP-complete (see \cite{GarayJ79}): 

\smallskip
 \noindent\threepart: 
    \ynproblem{A list $S=(a_1, \ldots, a_{3k})$ of unary-encoded positive integers  such that $\frac{L}{4}<a_i<\frac{L}{2}$ with $L=\frac{1}{k}\sum_{i=1}^{3k}a_i$.}{Is there a partition of $\interval{1}{3k}$ into $k$ (disjoint) sets $A_i=\{i_1, i_2, i_3 \}$ with  $a_{i_1}+a_{i_2}+a_{i_3}=L$ for all $i \in \interval{1}{k}$ ? }
    \smallskip
\noindent\partition: 
    \ynproblem{A list $S=(a_1, \ldots, a_{k})$ of binary-encoded positive integers.}{Is there a subset $A \subseteq \interval{1}{k}$  such that $\sum_{i\in A} a_i = \sum_{i \not\in A} a_i$? }
Recall that we need to give the numbers $a_i$ in binary as otherwise \partition can be solved in polynomial time using a dynamic programming approach.
    \smallskip
    
\noindent\esc: 
    \ynproblem{A set $A$ and subsets $A_1, \ldots, A_\ell \subseteq A$ with $\abs{A_i}\leq 3$.}{Is there a set $I \subseteq \interval{1}{\ell}$ such that $\bigcup_{i \in I} A_i = A$ and $A_i \cap A_j =\emptyset$ for all $i \neq j $ in $I$? }
    The problem is still \NP-hard if each element occurs in at most $3$ sets $A_i$.

\subsection{Spherical Equations}
Let $G$ be a group. A \emph{spherical equation} over $G$ is an equation 
\begin{align}\label{DefSE}
\prod_{i=1}^k z_i^{-1}c_iz_i=1
\end{align}
with $c_i \in G$ and $z_i$ being variables for $i \in \interval{1}{k}$. 
As the length of such an equation we define the number of  $c_i \neq 1$ in $G$. Note that these are quadratic equations because each $z_i$ appears exactly twice (as $z_i$ and $z_i^{-1}$).   

Let $G$ be any finite group. Then we write:
\[\DPSE{G} = \{(c_1, \ldots, c_k) \in G^k \mid k\in \N, \exists z_i \in G : \prod_{i=1}^k z_i^{-1}c_iz_i=1\}\]
Note that this is a formal language over the alphabet $G$ (we write $(c_1, \ldots, c_k)$ instead of $c_1 \cdots c_k$ as usual for formal languages to differentiate it from the product $c_1 \cdots c_k$ in $G$).
Using this notation, we write $(c_1, \ldots, c_k) \in \DPSE{G}$ if there exists a solution.
For $c \in G$ we also consider equations of the form
\begin{align}\label{AlternativeSE}
    \prod_{i=1}^{k} z_i^{-1} c_i z_i=c. 
\end{align}
This equation is equivalent to a spherical equation as in (\ref{DefSE}):
Conjugating both sides of (\ref{AlternativeSE}) by (the variable)  $z$, multiplying them by $z^{-1}cz$ and inserting  $zz^{-1}$ we obtain an equation of the form (\ref{DefSE}). These operations preserve the existence of a solution.  

We can identify $\DPSE{G}$ with the computation problem 
\ynproblem{ $c_1, \ldots, c_k \in G$.}{Is $(c_1, \ldots, c_k) \in \DPSE{G}$? }

In the following we want to consider this problem also with the group being part of the input. There are several ways how to represent a finite group as part of the input:
\newcommand{\DPfix}[2]{\textsc{SphEq}_{#1}(#2)}
\newcommand{\DPCT}{\textsc{SphEq}_{\mathrm{CayT}}}
\newcommand{\DPSub}[1]{\textsc{SphEq}(\textsc{Sgr}(#1))}

\vspace{-4mm}
\subsubsection{Input models for finite groups.} 
For a class/sequence of groups $\cG=(G_n)_{n\in \N}$ we define the \emph{satisfiability problem} (or \emph{Diophantine problem}) for spherical equations for $\cG$, denoted by  $\DPSE{\cG}$, as follows:
\ynproblem{ $n \in \N$, a description of $G_n$ and elements $c_1, \ldots, c_k \in G_n$.}{Is $(c_1, \ldots, c_k) \in \DPSE{G_n}$? }
We will use the following representations of the input, which are frequently used in literature and computational algebra software \cite{GAP4}: 
\vspace{-3mm}
\paragraph*{Constant.} 
In this model, $G$ is not part of the input but fixed. Thus, $\DPSE{G}$ is defined as above.
\vspace{-3mm}
\paragraph*{Cayley table.}
The problem $\DPCT$ has the following input: 
\prblminput{The Cayley table of a group $G$ and elements $c_1, \ldots, c_k$ in $G$.}
\vspace{-3mm}
\paragraph*{Permutation groups.} The problem $\DPSE{(S_n)_{n\in \N}}$ (resp. $\DPSE{(A_n)_{n\in \N}}$) has the following input:  
\prblminput{$n \in \N$ in unary and elements $c_1, \ldots, c_k \in S_n$ (resp. $A_n$) given as permutations. }
\vspace{-3mm}
\paragraph*{Matrix groups.} For matrix groups we have two parameters, the dimension $n$ and $q$ for the ring $\Z_q$. Both can be either fixed or variable. Usually, the dimension $n$ is given in unary and $q$ as well as the entries of the matrices are given in binary. We describe the input for $\DPSE{(\GL{n}{q})_{q \in \N}}$, but it is defined similarly for other matrix groups or if $q$ is fixed. 
\prblminput{$q \in \N$ in binary and matrices $c_1, \ldots, c_k$ in $\GL{n}{q}$.}
\vspace{-5mm}
\paragraph*{Subgroups.} Let $\cG$ be a sequence of groups (defined by matrices/permutations etc). By $\DPSub{\cG}$ we denote the following problem: 
\ynproblem{a description of a group $G \in \cG$, elements $g_1, \ldots, g_\ell$ in $G$ and $c_1, \ldots, c_k \in \gen{g_1, \ldots, g_\ell}$. }{Is $(c_1, \ldots, c_k) \in \DPSE{\gen{g_1, \ldots, g_\ell}} $ ?}
Note that this is a promise problem: for the question to be well-defined we need the promise that $c_1, \ldots, c_k \in \gen{g_1, \ldots, g_\ell}$. 
\vspace{-3mm}
\paragraph*{Custom input forms.} For some sequences of groups like dihedral we use other forms of input representations. We give further details in the corresponding sections.

\begin{remark} The following input model  also might be of interest (but is not considered in this paper): 
\prblminput{A finite presentation $\Gen{g_1, \ldots, g_n}{r_1, \ldots, r_m}$ of $G$ and  $c_1, \ldots, c_k \in G$ given as words over the $g_i$. }
\end{remark}
\vspace{-3mm}
\subsubsection{First observations.} 
Observe that if the $c_i$ are given as words over the generators, then $\DPSE{G}$ of length $1$ is the word problem and $\DPSE{G}$ of length $2$ is the conjugacy problem in $G$. So we can reduce the word problem and the conjugacy problem in $G$ to $\DPSE{G}$ in this case. 
\smallskip

\begin{lemma}\label{lem:reorder}
Let $\pi:\oneset{1,\dots, k} \to \oneset{1,\dots, k}$ be a permutation. Then  $\prod_{i=1}^k z_i^{-1}c_iz_i=1$ has a solution if and only if $\prod_{i=1}^k \tilde z_{i}^{-1}c_{\pi(i)}\tilde z_{i}=1$ has a solution. In other words, $(c_1, \ldots, c_k) \in \DPSE{G}$ if and only if $(c_{\pi(1)}, \ldots, c_{\pi(k)}) \in \DPSE{G}$.
\end{lemma}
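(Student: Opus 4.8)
The plan is to show that reordering the constants in a spherical equation preserves solvability by exhibiting an explicit transformation of a solution. First I would observe that it suffices to prove the claim when $\pi$ is a transposition of two \emph{adjacent} indices, say swapping positions $i$ and $i+1$, since every permutation is a product of adjacent transpositions (\cref{lem: DisjCycSn} gives the transposition decomposition, and adjacent transpositions generate $S_k$); the general statement then follows by composing these elementary swaps, carrying the solution along at each step. So the core of the argument is the single identity that lets us interchange two consecutive conjugated factors $z_i^{-1}c_iz_i$ and $z_{i+1}^{-1}c_{i+1}z_{i+1}$.

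The key algebraic step is the following. Suppose $\prod_{j=1}^k z_j^{-1}c_jz_j=1$ holds in $G$. Write $a = z_i^{-1}c_iz_i$ and $b = z_{i+1}^{-1}c_{i+1}z_{i+1}$, so the product contains the block $ab$. Then $ab = (ba^{-1})^{-1}\cdot\, ?$ — more precisely, use $ab = b\,(b^{-1}ab)$, i.e. we can move $b$ to the front at the cost of conjugating $a$ by $b$. Concretely, $b^{-1}ab = (z_i b)^{-1} c_i (z_i b)$ where I plug in the old value of $b$, which is again a conjugate of $c_i$. Hence setting $\tilde z_{i} = z_{i+1}$ (so the new $i$-th factor is $z_{i+1}^{-1}c_{i+1}z_{i+1}=b$) and $\tilde z_{i+1} = z_i z_{i+1}^{-1}c_{i+1}z_{i+1}$ (so the new $(i+1)$-th factor is $(z_i b)^{-1}c_i(z_i b)= b^{-1}ab$), and leaving all other $\tilde z_j = z_j$, we get
\begin{align*}
\prod_{j<i} z_j^{-1}c_jz_j \cdot b \cdot b^{-1}ab \cdot \prod_{j>i+1} z_j^{-1}c_jz_j
= \prod_{j<i} z_j^{-1}c_jz_j \cdot a b \cdot \prod_{j>i+1} z_j^{-1}c_jz_j = 1,
\end{align*}
which is exactly $\prod_{j=1}^k \tilde z_j^{-1} c_{\pi(j)} \tilde z_j = 1$ for $\pi$ the transposition $(i\; i+1)$. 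This shows one direction; the converse follows since $\pi^{-1}$ is also a permutation (indeed $\pi=\pi^{-1}$ for a transposition), so the argument is symmetric.

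Alternatively — and perhaps cleaner to write up — I would note the elementary fact that in any group, if $g_1 \cdots g_k = 1$ then any cyclic rotation $g_\ell g_{\ell+1}\cdots g_k g_1 \cdots g_{\ell-1}$ also equals $1$ (conjugate by $g_1\cdots g_{\ell-1}$), and each cyclically rotated factor is still a conjugate of the corresponding $c_j$; combining cyclic rotations with the adjacent-swap identity above, or just iterating, yields arbitrary reorderings. Either way, the only thing to be careful about is bookkeeping: tracking precisely which conjugating element to assign to each variable after a swap, and confirming that the new assignments are genuinely elements of $G$ (they are, being products of the original $z_j$ and $c_j$). There is no real obstacle here — the statement is essentially the observation that the relator of a spherical equation is defined up to cyclic permutation and reordering of its conjugated pieces — so the main "difficulty" is purely notational: writing the substitution $\tilde z_i \mapsto \dots$ without sign or inverse errors. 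I would therefore keep the proof short, proving the adjacent-transposition case by the displayed identity and invoking generation of $S_k$ by adjacent transpositions for the rest.
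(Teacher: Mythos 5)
Your proof is correct and takes essentially the same approach as the paper's: reduce to adjacent transpositions and interchange two consecutive conjugated factors via a conjugation identity. The only cosmetic difference is that the paper uses $ab=(aba^{-1})a$ (moving the first factor right, re-assigning the second variable), while you use $ab=b(b^{-1}ab)$ (moving the second factor left, re-assigning the first variable) — mirror images of the same computation.
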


\begin{proof}
We have 
\begin{align*}
  x^{-1} c x \; y^{-1} d y =  x^{-1} c x \;  y^{-1} d y \;  x^{-1} c^{-1} x \; x^{-1} c x = \tilde y^{-1} d \tilde y \; x^{-1} c x
\end{align*}
for $\tilde y = y  x^{-1} c^{-1} x $. Thus, we can exchange $c_i$ and $c_{i+1}$. By induction we can apply any permutation to the $c_i$.
\qed\end{proof}

\begin{proposition}\label{prop:inNP}
Let $\cG$ denote any class/sequence of groups given as matrix or permutation groups as above. Then $\DPSE{\CG}$ and $\DPSub{\cG}$ are in \NP. 
\end{proposition}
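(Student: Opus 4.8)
The plan is to exhibit, for a satisfiable spherical equation $\prod_{i=1}^k z_i^{-1}c_iz_i=1$, a polynomial-size witness that can be verified in polynomial time; membership in \NP then follows. The natural witness is simply a tuple $(z_1,\dots,z_k)$ of group elements constituting a solution. The two things to check are (i) that each $z_i$ has a representation of size polynomial in the input, and (ii) that the product $\prod_{i=1}^k z_i^{-1}c_iz_i$ can be evaluated and compared to the identity in polynomial time.

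For (ii), both points are routine once the input model is fixed. If $\cG$ is a sequence of permutation groups inside $S_n$ (with $n$ given in unary), a group element is a permutation of $\interval{1}{n}$, which has size $O(n\log n)$, and a product of $O(k)$ such permutations is computable in time polynomial in $n$ and $k$; comparison with $\mathrm{id}$ is immediate. If $\cG$ is a sequence of matrix groups over $\Z_q$ with $q$ in binary and dimension $n$ in unary, a group element is an $n\times n$ matrix over $\Z_q$, of size $O(n^2\log q)$; a product of $O(k)$ such matrices, with arithmetic performed modulo $q$, is computable in time polynomial in $n$, $k$ and $\log q$, and comparison with $I_n$ is immediate. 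Here one uses that the length of the equation (the number of $c_i\ne 1$) is part of the input, so $k$ is polynomially bounded. For the inverses $z_i^{-1}$ one either includes them in the witness as well, or computes them: inverting a permutation is trivial, and inverting a matrix over $\Z_q$ can be done in polynomial time (when $q$ is prime this is Gaussian elimination; for general $q$ one can work over $\Z_{q'}$ for the relevant prime power factors, or simply compute the adjugate and the inverse of the determinant modulo $q$).

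The only genuine subtlety is (i): a priori a solution $(z_1,\dots,z_k)$ might require elements that do not have small representations in the given model. For matrix and permutation groups this is a non-issue because \emph{every} element of the ambient group $\GL{n}{q}$ or $S_n$ already has a representation of size polynomial in the input (an $n\times n$ matrix over $\Z_q$, respectively a permutation of $\interval{1}{n}$), and a solution necessarily consists of such elements. For the subgroup problem $\DPSub{\cG}$ the situation is the same: the $z_i$ range over $\gen{g_1,\dots,g_\ell}\le G$, and any such element is in particular an element of $G$, hence has a small representation as a matrix or permutation; the verifier need not express it as a word in the $g_j$, it simply checks the matrix/permutation identity directly. (One does not even need to verify that the guessed $z_i$ lie in $\gen{g_1,\dots,g_\ell}$: by \cref{lem: ConjClassSn}-type reasoning, or directly, any solution over $G$ whose conjugating elements happen to lie in the whole group still witnesses satisfiability over the subgroup provided one guesses $z_i\in\gen{g_1,\dots,g_\ell}$ — but in fact it is cleaner to observe that if the equation has a solution over the subgroup at all, then it has one given by elements of the subgroup, each of which is a small matrix/permutation, so guessing such elements and checking the identity suffices.)

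I expect essentially no obstacle here; the proposition is a standard "guess and check" argument. The one point that deserves a sentence of care is making explicit that $k$, the number of constants, is polynomially bounded in the input length (which holds since the constants $c_1,\dots,c_k$ are themselves part of the input), so that the number of guessed group elements and the number of multiplications in the verification are both polynomial. With that remark in place, the nondeterministic algorithm — guess $z_1,\dots,z_k$ (and, if desired, $z_1^{-1},\dots,z_k^{-1}$) as matrices/permutations of the appropriate size, then deterministically compute and test the product — runs in polynomial time, establishing $\DPSE{\cG},\DPSub{\cG}\in\NP$.
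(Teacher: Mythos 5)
Your argument for $\DPSE{\cG}$ matches the paper's: guess $z_1,\dots,z_k$ as explicit matrices or permutations (polynomially bounded in size because they live in the ambient $\GL{n}{q}$ or $S_n$) and check the identity in polynomial time. That part is fine.

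Your argument for $\DPSub{\cG}$ has a genuine gap. For the subgroup problem, the question is whether the equation has a solution \emph{with all $z_i\in\gen{g_1,\dots,g_\ell}$}. Your verifier guesses the $z_i$ as matrices or permutations and only checks that $\prod_i z_i^{-1}c_iz_i=1$; it never checks that $z_i\in\gen{g_1,\dots,g_\ell}$. This is not a harmless omission: it changes the accepted language. Concretely, take $G=S_3$, $g_1=(1\,2\,3)$ so $\gen{g_1}=A_3$, and $c_1=c_2=(1\,2\,3)$. Since $A_3$ is abelian, $(c_1,c_2)\in\DPSE{A_3}$ would require $c_1c_2=1$, which is false, so $(c_1,c_2)\notin\DPSE{\gen{g_1}}$. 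But your algorithm would accept by guessing $z_1=1$, $z_2=(1\,2)$: the identity $z_1^{-1}c_1z_1\cdot z_2^{-1}c_2z_2=(1\,2\,3)(1\,3\,2)=1$ holds, and $(1\,2)$ is a perfectly small permutation. So "guess a small element of $G$ and check the identity" is a verifier for $\DPSE{G}$, not for $\DPSE{\gen{g_1,\dots,g_\ell}}$. Your parenthetical remark acknowledges the issue but does not resolve it: "any solution over $G$ whose conjugating elements happen to lie in the subgroup still witnesses satisfiability over the subgroup provided one guesses $z_i\in\gen{g_1,\dots,g_\ell}$" is circular, since the whole problem is how the verifier can be assured of that "provided."

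The paper handles this with the Babai--Szemer\'edi reachability lemma: each element of $\gen{g_1,\dots,g_\ell}$ has a straight-line program over the $g_j$ of length $O(\log^2 m)$ where $m=\abs{\gen{g_1,\dots,g_\ell}}$, and $\log m$ is polynomial in the input for matrix and permutation groups. Guessing such a straight-line program for each $z_i$ is simultaneously a polynomial-size representation \emph{and} a polynomial-time-verifiable certificate of membership in the subgroup, which is exactly what your witness lacks. (For permutation groups one could alternatively verify membership deterministically via Schreier--Sims, but for matrix groups over $\Z_q$ no unconditional polynomial-time membership test is available, so the straight-line-program certificate is the clean uniform solution.)
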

Note that \cref{prop:inNP} actually also holds for arbitrary equations in finite groups.

\begin{proof} 
For $\DPSE{\CG}$, on input of $n \in \N$, $c_1, \ldots, c_k \in G_n$ we guess a solution $z_1, \ldots, z_k \in G_n$. 
Note that each $z_i$ is a matrix of polynomial dimension and with polynomially many bits per entry or an element of $S_n$ represented by  at most $n$ disjoint cycles in $S_n$. 
So each $z_i$ has polynomial size in the input and we can calculate $\prod_{i=1}^kc_i^{z_i}$ in polynomial time. 

Concerning $\DPSub{\cG}$, according to the reachability lemma (\cite[Lemma 3.1]{Babai84}) each element in $\gen{g_1, \ldots, g_\ell}$ has a representation as a straight-line program (for a definition, see \cite{Babai84}) of length at most $(1+\log(m))^2$ if $m=\abs{\gen{g_1, \ldots, g_\ell}}$. Because $\log(\abs{\GL{n}{p}})$ and $\log(\abs{S_n})$ are polynomial in $n$ or $p$ resp., for each $z_i$ we can guess a representation in the $g_i$ of length polynomial in the input. This shows the proposition. 
\qed\end{proof}

Notice that for each group $G$, $\DPSE{G}$ is non-empty. Indeed,  for  $k\geq 2$ and $c_1, \ldots, c_{k-1} \in G$ set $c_k=(\prod_{j=1}^{k-1}c_j)^{-1}$. On the other hand, $(c) \notin \DPSE{G}$ for all $1\neq c \in G$. For abelian groups the problem is easy: If $A$ is abelian, then $(c_1, \ldots, c_k) \in \DPSE{A}$ if and only if $\prod_{i=1}^k c_i=1$. 

\section{Fixed finite groups and Cayley tables}\label{sec:constantCayley}

\subsection{The world if $G$ is fixed}

In this section we consider  $\DPSE{G}$ if the group $G$ is not part of the input, but fixed. So we are in the \emph{constant} input model. We will show that $\DPSE{G}$ is in \ACC. For a definition we refer to \cite[Def.4.34]{Vollmer99}. Readers not familiar with circuit complexity might read it simply as \LOGSPACE.
Before we consider the complexity let us mention some interesting property of spherical equations in finite simple groups:

\begin{proposition}\label{lem: SimplGroup}
If $G$ is a finite non-abelian simple group, then every spherical equation of length at least $\abs{G}^3-\abs{G}+1$  has a solution (for the length we only count the number of non-trivial $c_i$).
\end{proposition}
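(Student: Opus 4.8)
The plan is to exploit the two facts available about non-abelian simple groups: first, that such groups are perfect (every element is a product of commutators, in fact $G = [G,G]$), and second — more usefully — the Ore-type / Thompson-type result that in a finite non-abelian simple group every element is a product of at most some bounded number of conjugates of any fixed non-trivial element. Concretely, I would first observe the elementary reduction via \cref{lem:reorder}: the order of the constants does not matter, so $(c_1,\dots,c_k)\in\DPSE{G}$ iff some grouping/regrouping works. The key algebraic input I want is: if $c\in G$ is non-trivial, then the conjugacy class of $c$ generates $G$ (since $G$ is simple and the normal closure of $c$ is all of $G$), and hence the set of products of conjugates of $c$ equals $G$ after boundedly many factors. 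A clean quantitative handle: the product of the conjugacy class of $c$ with itself $t$ times stabilizes to a subgroup (here all of $G$), and one can bound $t$ by $|G|$ crudely, or better, note that the chain of partial products $\{1\}\subseteq C\subseteq C\cdot C\subseteq \cdots$ inside $G$ must stabilize within $|G|$ steps, and the stable set is a subgroup containing $c$, hence equals $G$.

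The main step is then the following claim: given constants $c_1,\dots,c_k$ with $k$ large, I want to show $\prod z_i^{-1}c_i z_i$ can be made trivial. Since there are only $|G|-1$ non-trivial elements of $G$, among $k\ge |G|^3-|G|+1$ non-trivial constants, by pigeonhole some element $c\in G$ occurs among the $c_i$ at least $\lceil (|G|^3-|G|+1)/(|G|-1)\rceil = |G|^2+1$ times. Using \cref{lem:reorder} I may assume these equal copies of $c$ are consecutive, say $c_1=\cdots=c_m=c$ with $m\ge |G|^2+1$. Now I would argue that a product of $m$ conjugates of a single non-trivial element $c$, with $m\ge |G|^2$ (or even just $m$ large enough and of the right parity if one worries about $G$ being generated but products landing in a coset — but $G$ simple non-abelian is perfect, so no parity obstruction), can equal any prescribed target $t\in G$; in particular it can equal $\bigl(\prod_{i=m+1}^k c_i\bigr)^{-1}$ after fixing arbitrary conjugating elements for $z_{m+1},\dots,z_k$. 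This reduces everything to: the $m$-fold product of the conjugacy class of $c$ is all of $G$ once $m$ is large enough, with an explicit bound. I would get the bound from the stabilization argument: let $C$ be the conjugacy class of $c$; the sets $C^{(j)} = \{x_1\cdots x_j : x_i\in C\}$ satisfy $C^{(j)}\subseteq C^{(j+2)}$ (multiply by $x x^{-1}$-type elements — care needed, this needs $1$ or a symmetric element; since $c$ and $c^{-1}$ need not be conjugate, instead use that $C^{(j)}\cup C^{(j+1)}$ is eventually a subgroup: its union over all $j$ is closed under multiplication and contains $c$, hence is $G$, and the nested union stabilizes in $\le |G|$ steps of $j$). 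Combining, for $m\ge |G|$ we already get $C^{(m)}\cup C^{(m+1)} = G$, and a small additional argument (adjoining one or two more copies of $c$, or re-deriving with consecutive parity) upgrades this to $C^{(m)}=G$ for $m\ge |G|^2$, comfortably within the budget $|G|^2+1$.

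The main obstacle I anticipate is the parity/coset subtlety: a product of conjugates of a single element $c$ all lie in a fixed coset of $[G,G]$ relative to $\langle\langle c\rangle\rangle$-type considerations, but here $[G,G]=G$ so this evaporates; the subtler version is that $C^{(m)}$ for fixed $m$ might genuinely be a proper subset even when $\bigcup_m C^{(m)}=G$, so I must be careful to bound a single $m$ rather than the union. This is exactly why the bound is $|G|^3$ rather than $|G|$: one factor of $|G|-1$ comes from pigeonhole to get $|G|^2+1$ equal constants, and the $|G|^2$ comes from forcing the fixed-length product to cover $G$. I would handle it by the stabilization-of-chain argument for $C^{(m)}\cup C^{(m+1)}$, then noting that once this equals $G$, appending $c\cdot c^{-1}$ is not available, but appending a carefully chosen conjugate pair $z^{-1}cz \cdot w^{-1}cw$ that multiplies to any desired correction — or simply absorbing the $\pm1$ length ambiguity into the remaining $k-m$ constants, which we are free to conjugate arbitrarily and which number at least $k-m \ge |G|^3-|G|+1-(k)$... — cleanest is: fix $z_i=1$ for $i>m$, let $t=\bigl(\prod_{i>m}c_i\bigr)^{-1}$, and solve $\prod_{i\le m}z_i^{-1}c z_i = t$, which is possible since $m\ge |G|^2+1 > $ the stabilization length and $t\in G = C^{(m)}$ once $m$ is large enough and of a length reachable — and by throwing in the observation that $C^{(m)}=C^{(m+2)}$ is false in general but $C^{(m')}=G$ for all $m'$ in an interval of length $\ge 2$ once $m'\ge|G|^2$, one of $m, m-1$ works, and we absorb the off-by-one by moving one constant between the two groups. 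I will present this carefully in the actual proof; the heart is the stabilization bound, everything else is bookkeeping with \cref{lem:reorder}.
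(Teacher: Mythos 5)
Your overall strategy matches the paper's: use \cref{lem:reorder} to reorder, apply pigeonhole to find some non-trivial $c$ occurring many times among the constants, and then invoke a lemma saying that a product of $m$ conjugates of $c$ can hit any target in $G$ once $m$ is bounded by roughly $|G|^2$. The paper isolates exactly this as an auxiliary lemma, and both you and the paper obtain the $|G|^3-|G|+1$ bound by multiplying a $\lesssim|G|^2$ bound for the conjugate-product step by a factor coming from pigeonhole.

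However, your argument for the key step contains a genuine gap. You write that the chain $\{1\}\subseteq C\subseteq C\cdot C\subseteq\cdots$ is increasing and stabilizes in $\leq|G|$ steps; but $\{1\}\subseteq C$ is false since $c\ne 1$, and $C\subseteq C\cdot C$ is not generally true either. You then acknowledge this and retreat to claiming $C^{(j)}\cup C^{(j+1)}$ is an increasing chain, but that also fails: $C^{(j)}\subseteq C^{(j+1)}\cup C^{(j+2)}$ is not established, so there is no monotonicity to exploit. What you do correctly observe is that $\bigcup_j C^{(j)}$ is a normal subgroup (hence $G$) that stabilizes in $\leq|G|$ steps, but that only shows every element of $G$ lies in \emph{some} $C^{(j)}$ with $j\leq|G|$, not that a \emph{single} $C^{(m)}$ equals $G$ — which is precisely the statement you need and precisely the obstacle you flagged yourself. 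Your attempts to ``absorb the off-by-one'' don't address this: the real problem is not an off-by-one (indeed once $C^{(m)}=G$ one has $C^{(m+1)}\supseteq G\cdot\{c\}=G$ automatically), but that you never produce any bounded $m$ with $C^{(m)}=G$. The paper's fix is to use the order $r$ of $c$: since $c^r=1$, taking all conjugators trivial shows $1\in C^{(r)}$, hence $C^{(j)}\subseteq C^{(j+r)}$, and the subsequence $C^{(r)}\subseteq C^{(2r)}\subseteq\cdots$ is genuinely monotone, stabilizes within $|G|$ steps to a normal subgroup containing a non-trivial element, and $kr\leq|G|^2$. This padding-by-$r$ idea is the ingredient your proof is missing. (Separately, your pigeonhole arithmetic is off — $\lceil(|G|^3-|G|+1)/(|G|-1)\rceil=|G|^2+|G|+1$, not $|G|^2+1$ — but that error is harmless, as it only means you have even more copies of $c$ than you claimed.)
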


For the proof, we need the following lemma:
\begin{lemma}\label{lem:prodofconjugats}
Let $G$ be a finite non-abelian simple group. Then there exists some $K \in \MN$ with $K \leq \abs G^2$ such that for every $h\in G$ and $g\ne 1$
there are $x_1, \dots, x_K \in G$ with $h = g^{x_1} \cdots g^{x_K}$.
\end{lemma}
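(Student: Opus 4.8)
The plan is to prove \cref{lem:prodofconjugats} using two classical facts about finite non-abelian simple groups. First, for any $g \neq 1$ in such a group $G$, the conjugacy class $g^G$ generates $G$ (since the subgroup it generates is normal and nontrivial, hence all of $G$ by simplicity). Consequently, the set of products of conjugates of $g$ eventually stabilizes: writing $P_t = \{ g^{x_1} \cdots g^{x_t} : x_i \in G \}$, one has $P_1 \subseteq P_1 P_1 \cup \dots$ and more usefully $P_s \subseteq P_t$ whenever $s \le t$ once $1 \in P_t$ — but the cleanest route is to observe that $\bigcup_t P_t$ is a subgroup (it is closed under multiplication, and contains inverses because $g^{-1}$ is itself a product of conjugates of $g$, as $g^{|g|-1} = g^{-1}$ where $|g|$ is the order of $g$, so powers of $g$ lie in the $P_t$'s). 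Being a subgroup containing the generating class $g^G$, it equals $G$. Hence for every $h \in G$ there is some $t$ with $h \in P_t$.

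The remaining work is to make $t$ uniform and to bound it by $|G|^2$. Here I would use the standard pigeonhole/growth argument: consider the ascending chain $P_1 \subseteq P_1 \cup P_2 \subseteq P_1 \cup P_2 \cup \dots$; since each $P_1 \cup \dots \cup P_t$ is a subset of $G$, and the chain is eventually all of $G$, and — crucially — once two consecutive unions are equal the chain stabilizes (because $P_{t+1} \subseteq (P_1 \cup \dots \cup P_t) \cdot P_1 \subseteq$ the next union, and if $P_1 \cup \dots \cup P_t = P_1 \cup \dots \cup P_{t+1}$ then multiplying by $g^{x}$ keeps us inside), the chain strictly increases until it stops, so it reaches $G$ within $|G|$ steps. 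This gives $K_0 \le |G|$ such that $P_1 \cup \dots \cup P_{K_0} = G$. To get a single exponent rather than a union, pad: for any $h \in P_j$ with $j \le K_0$, and any target total length $K \ge K_0$, I want to write $h$ as a product of exactly $K$ conjugates of $g$. Insert $(K-j)/m$ copies of a block of $m$ conjugates of $g$ equal to $1$, where $m$ is the order of $g$ in $G$ — wait, a product of $m$ conjugates of $g$ need not be $1$; instead use $g^{x} (g^{x})^{-1}$-type cancellation: since $g^{-1} = g^{m-1}$ is a product of $m-1$ conjugates of $g$ (namely $m-1$ copies of $g$ itself), the element $1 = g \cdot g^{m-1}$ is a product of $m$ conjugates of $g$. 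So any $h \in P_j$ is also in $P_{j + tm}$ for all $t \ge 0$. Choosing $m \le |G|$ and $K_0 \le |G|$, every $h$ lies in $P_K$ for some $K$ with $K \le K_0 + m \le 2|G|$; a slightly more careful accounting using a common value $K$ achievable for all $h$ simultaneously (take $K$ in the arithmetic progression modulo $\operatorname{lcm}$ of the relevant $m$'s, all at most $|G|$, and at least $K_0$) yields $K \le |G|^2$, which is the stated bound.

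The main obstacle will be the bookkeeping to get a \emph{single} exponent $K$ that works for \emph{all} $h \in G$ and \emph{all} $g \neq 1$ at once, with the clean bound $|G|^2$. The per-$h$, per-$g$ existence is easy; the uniformization requires (i) bounding the stabilization length of the union chain by $|G|$, and (ii) aligning the residues so that padding with trivial blocks of length dividing something $\le |G|$ lands everyone on the same $K$ — this is where the product of two bounds, giving $|G|^2$, naturally appears. I would present it as: fix $K = |G|^2$ (or note $|G|^2$ is divisible by every $m = \operatorname{ord}(g) \le |G|$ when... actually $|G|^2$ need not be divisible by $m$, but $m \mid |G| \mid |G|^2$, so $|G|^2 \equiv 0 \pmod m$ only if $m \mid |G|^2$, which holds since $m \mid |G|$), so for each $g$ the trivial block has length $m \mid |G|$, hence $|G|^2 - j$ is a multiple of $m$ for suitable small $j \le K_0 \le |G| < |G|^2$ with $h \in P_j$ — we need $|G|^2 - j \equiv 0 \pmod m$, i.e. $j \equiv |G|^2 \equiv 0 \pmod m$, so we need $h \in P_j$ for some $j$ that is a multiple of $m$ with $j \le |G|^2$; since $h \in P_{j_0}$ for some $j_0 \le |G|$ and we may replace $j_0$ by $j_0 + (m - j_0 \bmod m) \le j_0 + m \le 2|G| \le |G|^2$, this works. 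Then $h = (\text{product of } j \text{ conjugates of } g)\cdot(\text{product of } |G|^2 - j \text{ conjugates of } g \text{ equal to }1)$ exhibits $h$ as a product of exactly $K = |G|^2$ conjugates of $g$, completing the proof.
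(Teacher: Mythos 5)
Your overall strategy is right, and the first two‑thirds of the argument (the conjugacy class generates $G$; $\bigcup_t P_t$ is a normal subgroup, hence all of $G$; the ascending chain $P_1\cup\dots\cup P_t$ stabilizes within $|G|$ steps) is correct. The gap is in the uniformization. Having $h\in P_{j_0}$ and $1\in P_m$ (with $m=\operatorname{ord}(g)$) only gives $h\in P_{j_0+tm}$ for $t\ge 0$, i.e.\ you can increase the exponent by \emph{multiples of $m$}. But you then write ``we may replace $j_0$ by $j_0+(m-j_0\bmod m)$'': that increment is \emph{not} a multiple of $m$ (it is the residue needed to round $j_0$ up to the next multiple of $m$), so $h\in P_{j_0}$ does not give $h\in P_{j_0+(m-j_0\bmod m)}$. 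Concretely, if $h\in P_1$ and $m=3$, your padding gives $h\in P_1,P_4,P_7,\dots$, never a multiple of $3$, so you cannot land on $K=|G|^2$ this way. The claim is still true, but a different idea is needed to prove it.

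The paper sidesteps this by never working with the union $P_1\cup\dots\cup P_t$; instead it looks directly at the single set $S(kr)$ (in your notation $P_{km}$) for a suitable $k\le |G|$ and shows this one set is already a subgroup: the chain $P_m\subseteq P_{2m}\subseteq\cdots$ stabilizes at some $P_{km}$ with $k\le |G|$, then $P_{km}\cdot P_{km}=P_{2km}=P_{km}$ and $1\in P_{km}$, so $P_{km}$ is a normal subgroup, and a short computation ($g^{km-1}g^y=g^{-1}g^y\ne 1$ for $y$ not commuting with $g$) makes it nontrivial, hence equal to $G$, with $km\le |G|^2$. To upgrade your proof you would need this extra structural step (closedness of a \emph{single} $P_{km}$ under multiplication), or alternatively first establish $P_T=G$ for some $T$ and then observe $P_T=G\Rightarrow P_{T+1}=G$ (write $h=(hg^{-1})\cdot g$ with $hg^{-1}\in G=P_T$), which lets you pad by $1$ instead of by $m$. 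Either fix essentially reproduces the paper's argument; the union‑chain bound alone does not suffice.

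Two minor additional points: you should also say why a uniform $K$ over all $g$ exists (once $P_{K_g}^{(g)}=G$, one needs $P_{K}^{(g)}=G$ for the common $K\ge K_g$, which again uses the ``pad by one'' observation); and ``$g^{-1}=g^{m-1}$ is a product of $m-1$ conjugates'' is fine, but it is cleaner to just note $1=g^m\in P_m$ directly.
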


\begin{proof}
First notice that it suffices to find for every $g \in G$ some $K_g \in \MN$ such that every $h \in G$ can be written as a product of conjugates of $g$ of length $K_g$. Then $K$ can be simply chosen as the maximum over the different $K_g$.

Let $m \in \N$ and $S(m) = \Set{g^{x_1} \cdots g^{x_m}}{x_1, \dots, x_m \in G}$. We have to show that $S(K_g) = G$ for some $K_g \leq \abs{G}^2$. Let $r$ denote the order of $g$. Then for all $k$ we have $S(k) \subseteq S(k+r)$. 

Hence, with increasing $k$ the sets $S(kr)$  eventually stabilize~-- i.e. there is some $k\leq \abs{G}$ with $S(kr) = S((k+i)r)$ for all $i \geq 0$. Clearly $1 \in S(kr)$. Moreover, $S(kr) \cdot S(kr) = S(2kr) = S(kr)$. Therefore, $S(kr)$ is a subgroup of $G$. Moreover, it is normal by its very definition.  So, $S(kr)=1$ or $S(kr)=G$. 
Thus, it remains to find some non-trivial element in $S(kr)$.

In order to do so, let $y \in G$ be some element that does not commute with $g$ (which exists because $G$ has trivial center).
Now define $x_1= \cdots =x_{kr-1}=1$ and $x_{kr}=y$. Then $g^{x_1} \cdots g^{x_{kr}}  = g^{kr-1} g^y = g^{-1} g^y \neq 1$. Hence, $S(kr) = G$.
Finally, note that $kr \leq \abs{G}^2$.
\qed\end{proof}

\begin{proof}[of \cref{lem: SimplGroup}]
Let $K$ be as in \cref{lem:prodofconjugats}. 
If we have a  spherical equation with  $(K-1) \cdot |G|+1$  non-trivial $c_i$, then at least one $c_i$  appears at least $K$ times.
Therefore, by \cref{lem:reorder} we can reorder the $c_i$ such that $c_1 = c_{2} = \cdots = c_{K}$. Now the lemma follows from \cref{lem:prodofconjugats}.
\qed\end{proof}

Next, let us turn to the complexity of solving spherical equations in a fixed finite group.
\begin{theorem} \label{thm: finitegroup}
Let $G$ be a fixed finite group. Then the set of satisfiable spherical equations in $G$ forms a regular language with commutative syntactic monoid. 
\end{theorem}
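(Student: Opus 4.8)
The plan is to show that membership of a tuple $(c_1,\ldots,c_k)$ in $\DPSE{G}$ depends only on the multiset $\{c_1,\ldots,c_k\}$ (this is \cref{lem:reorder}) and, moreover, only on how many times each conjugacy class is hit, modulo a bounded threshold. Concretely, I would associate to each group element $c$ its conjugacy class $[c]$, and argue that satisfiability is determined by the function $f \colon \{\text{conjugacy classes of } G\} \to \N$ counting the number of indices $i$ with $c_i \in [c]$, \emph{truncated} at some constant $N$ depending only on $G$. Since $G$ is fixed, there are only finitely many conjugacy classes and finitely many truncated count-vectors, so the language is recognised by a finite automaton reading the $c_i$ and updating a bounded counter per class; this gives regularity. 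Commutativity of the syntactic monoid is then immediate from \cref{lem:reorder}: reordering the input letters never changes membership, so the syntactic congruence identifies $uv$ with $vu$ for all words $u,v$ over $G$, hence the syntactic monoid is commutative.

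The one nontrivial point is the \emph{truncation} step: why does increasing the multiplicity of a conjugacy class beyond some constant $N$ not change satisfiability? I would prove a stabilisation lemma in the spirit of \cref{lem:prodofconjugats}: for a fixed element $g \in G$ of order $r$, the set $S(m) = \{g^{x_1}\cdots g^{x_m} : x_i \in G\}$ satisfies $S(m) \subseteq S(m+r)$, so the chain $S(r) \subseteq S(2r) \subseteq \cdots$ stabilises after at most $|G|$ steps to some set $S^\ast_g$; and then $S(m) = S(m+1) = S^\ast_g$ for all $m$ large enough in \emph{each} residue class mod $r$, in fact $S(m+1) = g \cdot S^\ast_g = S^\ast_g$ once we are past the stable point (using that $g \in S^\ast_g$ when $S^\ast_g$ is a coset of a subgroup, or handling the residues directly). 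Taking $N$ to be the maximum over $g \in G$ of the stabilisation bound (so $N \le |G|^2$ suffices), one gets: for any $c$ and any $m \ge N$, the set of possible values of $\prod_{i=1}^m z_i^{-1} c\, z_i$ equals the set for $m = N$ (for the appropriate residue of $m$ mod the order of $c$) — or, more cleanly, the set of values achievable with \emph{at least} $N$ copies of $c$ stabilises. Combining this across all classes present in the equation, replacing each multiplicity exceeding $N$ by its value in $\{N, N+1, \ldots, N + \operatorname{ord}(c) - 1\}$ congruent mod $\operatorname{ord}(c)$ preserves satisfiability.

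With the stabilisation lemma in hand, the automaton is built as follows. Its state records, for each conjugacy class $[c]$ of $G$, the number of $c_i$ seen so far lying in $[c]$, capped at $N + \operatorname{lcm}\{\operatorname{ord}(c)\} $ (a constant depending only on $G$), so that the residue mod $\operatorname{ord}(c)$ is retained once we exceed $N$. This is a finite state set. The accepting states are exactly those count-vectors $v$ for which the (finitely many) equations with multiplicity profile $v$ are satisfiable — a fixed finite table, decidable since $G$ is finite and for a fixed equation of bounded length one can just enumerate all assignments $z_i \in G$. Reading the letter $c_i$ simply increments the counter for $[c_i]$ (saturating as described). This automaton accepts exactly $\DPSE{G}$, proving regularity; and since by \cref{lem:reorder} membership is invariant under permuting the input letters, the syntactic monoid is commutative. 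I expect the main obstacle to be stating and proving the stabilisation lemma cleanly enough to handle the residue classes mod $\operatorname{ord}(c)$ uniformly — but this is a routine elaboration of the argument already used for \cref{lem:prodofconjugats}.
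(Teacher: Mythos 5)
Your plan is sound in outline but takes a much longer route than the paper's and re-proves by hand a finiteness fact that is available for free. The paper's proof is three lines: the map $\phi \colon G^* \to \mathcal{P}(G)$, where $\mathcal{P}(G)$ is the power set of $G$ regarded as a (finite) monoid under subset product and $\phi(g) = \{x^{-1}gx : x \in G\}$, is a monoid homomorphism into a finite monoid; $(c_1,\ldots,c_k)$ is a satisfiable spherical equation iff $1 \in \phi(c_1\cdots c_k)$, so the language is recognized by $\phi$ and hence regular; and by Lemma~\ref{lem:reorder} the image of $\phi$ is a commutative submonoid of $\mathcal{P}(G)$, so the syntactic monoid, being a quotient of $\operatorname{im}\phi$, is commutative. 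Your truncated-Parikh-vector automaton re-derives from scratch the standard consequence that a language recognized by a finite commutative monoid is determined by an eventually-periodic, truncated count of each letter; the ``stabilisation lemma'' you sketch is just eventual periodicity of the sequence $\phi(c), \phi(c)^2, \ldots$ in the finite monoid $\mathcal{P}(G)$, recovered via the chain argument of Lemma~\ref{lem:prodofconjugats}. There is one genuine slip en route: your parenthetical claim that $S(m+1) = g\cdot S_g^\ast = S_g^\ast$ once past the stable point is false in general --- already for $G = \Z_3$ with $g$ a generator one has $S(3k) = \{0\}$ while $S(3k+1) = \{1\}$ for all $k$ --- so a plain saturating cap on the counters would give a wrong automaton. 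Your final construction does keep the residues modulo $\operatorname{ord}(c)$, which repairs this, but you should drop the false shortcut and state clearly that the counter wraps (rather than saturates) once it exceeds $N$; ``capped at $N + \operatorname{lcm}\{\operatorname{ord}(c)\}$'' as written suggests saturation.
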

Note that here we view $G$ as a finite alphabet and identify the input $(c_1, \ldots, c_k)$ to a spherical equation with the word $c_1\cdots c_k \in G^*$ over the group elements. Be aware that here we do not allow the $c_i$ to be represented as word in $G^*$ but each of them needs to be given as a single group element.
Recall that a language is regular if and only if it has a finite syntactic monoid (or is recognized by a finite monoid). For background information and a definition of syntactic monoids, see e.g.\ \cite[Ch. 7.1]{edam16}.

\begin{proof}
Let  $\phi: G^* \to \cP(G)$ (the power set of $G$, which is a monoid) denote the  monoid homomorphism defined by $g \mapsto \Set{x^{-1}gx}{x \in G}$. 
Then by \cref{lem:reorder} the image of $\phi$ is commutative.
Moreover, $\prod_{i=1}^k c_i^{z_i}=1$ is satisfiable if and only if $1 \in \phi(c_1 \cdots c_k)$ as outlined above. Therefore, the set of satisfiable spherical equations is recognized by the homomorphism $\phi$. 
\qed\end{proof}

\begin{corollary} \label{cor: DPforFixedG}
$\DPSE{G}$  is in uniform $\ACC$ for every fixed finite group $G$. Moreover, it can be solved in linear time for every fixed finite group $G$.
\end{corollary}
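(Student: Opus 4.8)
\textbf{Proof plan for \cref{cor: DPforFixedG}.}
The statement follows almost immediately from \cref{thm: finitegroup}, so the plan is mostly to unwind the consequences of having a regular language recognized by a commutative monoid. First I would recall the homomorphism $\phi : G^* \to \cP(G)$ from the proof of \cref{thm: finitegroup}, noting that its image $M = \phi(G^*)$ is a \emph{finite commutative} monoid and that the set of satisfiable spherical equations equals $\phi^{-1}(\{S \in M : 1 \in S\})$. Since $G$ is fixed, both $M$ and the accepting subset are of constant size.

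For the \ACC\ bound, the key point is that a language recognized by a finite \emph{solvable} monoid lies in \ACC\ (this is Barrington--Thérien; see \cite[Ch.~4]{Vollmer99}), and a finite commutative monoid is in particular solvable — indeed its group of units is abelian, hence nilpotent. Alternatively, and more elementarily: because $M$ is commutative, evaluating $\phi(c_1\cdots c_k)$ amounts to computing, for each of the finitely many elements $m \in M$, the multiplicity modulo the eventual period of $m$ with which $m$ occurs among $\phi(c_1),\dots,\phi(c_k)$; counting occurrences of a fixed symbol modulo a fixed constant and checking thresholds are exactly the operations available to \ACC\ circuits (modular gates plus constant-depth AND/OR), and the uniformity is immediate since everything depends only on the fixed multiplication table of $M$. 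I would phrase it via the former route (cite the characterization of regular languages with solvable syntactic monoid) to keep the argument short, and remark that $\FO[\mathrm{mod}]$-definability or the Barrington--Thérien theorem gives the uniform version.

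For the linear-time bound, the argument is the standard one for regular languages: a DFA for the language has a constant number of states (bounded by $|M|$, or by $2^{|G|}$), so on input $c_1\cdots c_k$ one makes a single left-to-right pass maintaining the current state — reading each $c_i$ and performing one table lookup in the (constant-size) transition function — and accepts iff the final state is accepting. This is $O(k)$ time, and $k$ is linear in the input length since each $c_i$ is a single symbol from the fixed alphabet $G$.

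I do not expect a genuine obstacle here; the only point requiring a little care is making sure the \emph{uniformity} of the \ACC\ bound is actually invoked correctly — i.e.\ that we cite a version of the regular-language/solvable-monoid characterization that yields \emph{uniform} \ACC\ (or equivalently $\FO[\mathrm{mod}]$), rather than only the non-uniform inclusion — but since $G$ is constant this is standard and the reference in \cite{Vollmer99} suffices.
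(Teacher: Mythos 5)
Your proof is correct and follows essentially the same route as the paper: invoke \cref{thm: finitegroup} to get recognition by a finite commutative monoid, cite the fact that languages recognized by commutative (more generally, solvable) monoids are in \ACC (the paper references \cite{McKenziePT91} for this, you cite Barrington--Thérien via \cite{Vollmer99}; same result), and observe that regularity gives linear time via a single DFA pass. Your added elementary explanation of why commutative monoids land in \ACC is a nice aside but not something the paper spells out.
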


\begin{proof}
By \cref{thm: finitegroup} the set of satisfiable spherical equations is recognized by a homomorphism into a commutative monoid. By \cite{McKenziePT91} all languages recognized by commutative (or, more generally, solvable) monoids are in $\ACC$. Moreover, as spherical equations are a regular language (they are recognized by a finite monoid), satisfiability can be decided in linear time.
\qed\end{proof}

\subsection{Multiplication tables and direct products for inputs}

In this section we prove that $\DPCT \in \NL$. Moreover, we will show that there are classes/sequences of groups $(G_n)_{n\in \N}$ such that $\abs{G_n}$ is exponential in $n$ but still, $\DPSE{(G_n)_{n\in \N}}$ is in \ACC.

\begin{proposition}\label{prop:CayNL}
 $\DPCT$ is in \NL. 
\end{proposition}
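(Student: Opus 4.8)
The plan is to exhibit an \NL algorithm deciding $\DPCT$. Recall from \cref{thm: finitegroup} (and the discussion around it) that $(c_1,\dots,c_k)\in\DPSE{G}$ if and only if $1\in\phi(c_1\cdots c_k)$, where $\phi\colon G^*\to\cP(G)$ sends $g$ to its conjugacy class $c(g)=\Set{x^{-1}gx}{x\in G}$ and $\phi(c_1\cdots c_k)=c(c_1)\cdots c(c_k)$ is the product of conjugacy classes as subsets of $G$. So the task reduces to deciding whether $1$ lies in the product set $c(c_1)\cdots c(c_k)$. The key structural point is that this product set only depends on the multiset of conjugacy classes appearing among $c_1,\dots,c_k$, and~-- more importantly for space~-- can be computed incrementally: define $T_0=\{1\}$ and $T_j = T_{j-1}\cdot c(c_j)$; then $(c_1,\dots,c_k)\in\DPSE{G}$ iff $1\in T_k$. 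Each $T_j$ is a union of conjugacy classes (since $c(g)\cdot c(h)$ is conjugation-invariant), so it is determined by a subset of the conjugacy classes of $G$.

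However, storing $T_j$ explicitly needs up to $\log(\text{number of conjugacy classes})$ bits, which is fine, but the subtlety is that the number of conjugacy classes can be linear in $|G|=N$, so a bitmask is $\Theta(N)$ bits~-- too much for \NL. The fix is the standard \NL technique of \emph{guessing and verifying membership} rather than storing the whole set. Concretely, to decide $1\in c(c_1)\cdots c(c_k)$, I would nondeterministically guess, for each $j=1,\dots,k$, an element $g_j\in G$ (a single index into the Cayley table, so $O(\log N)$ bits), check that $g_1\in c(c_1)$, that $g_j \in g_{j-1}\cdot c(c_j)$ for $j\ge 2$~-- equivalently $g_{j-1}^{-1}g_j\in c(c_j)$~-- and finally that $g_k=1$. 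Checking $h\in c(c)$ in logspace amounts to checking $\exists x\in G: x^{-1}cx=h$: cycle through all $x\in G$ (a logspace counter over table indices), compute $x^{-1}$ by scanning the row/column of $x$ for the entry $1$, then look up $x^{-1}c$ and then $(x^{-1}c)x$ in the table, and test equality with $h$. All of this uses only a constant number of $O(\log N)$-bit registers: the current position $j$ (implicit in a streaming pass), $g_{j-1}$, $g_j$, and the loop variable $x$. Since we never need $g_{j-1}$ again once $g_j$ is produced and checked, we overwrite it, so the space stays logarithmic; the nondeterministic guesses are made on the fly as we scan the input equation left to right.

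The one point requiring a little care is that the $c_i$ in the input are constants from $G$ but the ``guessed path'' $g_1,\dots,g_k$ consists of arbitrary group elements, and we must make sure each verification step ($g_{j-1}^{-1}g_j \in c(c_j)$) is itself an \NL (indeed \LOGSPACE) subroutine with the Cayley table as oracle~-- this is exactly the inner loop over $x\in G$ described above, and inverses are read off the table, so there is no hidden cost. Correctness follows because $c(c_1)\cdots c(c_k)=\Set{g_k}{\exists g_1,\dots,g_{k-1}:\ g_1\in c(c_1),\ g_j\in g_{j-1}c(c_j)\ \forall j\ge 2}$, a routine induction on $k$, so the nondeterministic machine accepts iff $1$ is reachable iff the equation is satisfiable. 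The main (very mild) obstacle is thus simply to phrase the set-product computation as a reachability-style guess-and-check so that no intermediate object of superlogarithmic size is ever stored; once that reformulation is in place, membership in \NL is immediate, and \P-membership follows since $\NL\subseteq\P$.

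\medskip

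\noindent\emph{Remark.} Alternatively, and perhaps cleaner to write up: build (implicitly) a layered reachability graph with vertex set $\interval{0}{k}\times G$, an edge from $(j-1,g)$ to $(j,h)$ whenever $g^{-1}h\in c(c_j)$, and ask whether $(k,1)$ is reachable from $(0,1)$. Edges are testable in \LOGSPACE from the Cayley table as above, the graph has polynomially many vertices, and $s$-$t$ reachability is the canonical \NL-complete problem; hence $\DPCT\in\NL$.
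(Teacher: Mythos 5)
Your proof is correct and uses essentially the same idea as the paper: nondeterministically build the running partial product $\prod_{j\le i} c_j^{z_j}$ from left to right, storing only a constant number of group elements and using the Cayley table for multiplications and inverse lookups. The only cosmetic difference is that the paper guesses the conjugators $z_i$ and computes $c_i^{z_i}$ directly, while you guess the running products $g_j$ and verify $g_{j-1}^{-1}g_j \in c(c_j)$ by a deterministic logspace scan over $x\in G$ (your layered-reachability reformulation is the same algorithm phrased as an $s$-$t$ reachability instance).
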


\begin{proof}
Let $G$ be a finite group and assume that the elements in $G$ are binary numbers $1, \ldots, \abs{G}$. We obtain an \NL-Algorithm for $\DPCT$ as follows: 

As input, we have the Cayley table of $G$ and elements $c_1, \ldots, c_k$.  Starting with $i=1$, subsequently guess $z_i$ and look for $c_i^{z_i}$ and $\prod_{j=1}^i c_j^{z_j}$ in the Cayley table. We only remember the last product (in each step, we only remember three group elements). Set the counter to $i$. After $k$ steps,  check if $\prod_{j=1}^k c_j^{z_j}=1$. 
\qed\end{proof}

\begin{proposition} \label{prop: finiteGroups}
    The problem $\DPCT$ can be solved in time $k\cdot \abs{G}^2$ on a random access machine.  
\end{proposition}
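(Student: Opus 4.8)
The plan is to turn the \NL-algorithm of \cref{prop:CayNL} into an explicit dynamic-programming procedure over the homomorphism $\phi$ from the proof of \cref{thm: finitegroup}. Recall that a spherical equation $(c_1,\dots,c_k)$ is satisfiable if and only if $1 \in \phi(c_1)\cdots\phi(c_k)$, where $\phi(c) = \Set{z^{-1}cz}{z\in G} \subseteq G$ is the conjugacy class of $c$, and the product is taken in the power-set monoid $\cP(G)$. So we want to compute the set $R_i = \phi(c_1)\cdots\phi(c_i) \subseteq G$ iteratively for $i = 1,\dots,k$ and then check whether $1 \in R_k$. Each $R_i$ is a subset of $G$, representable in $\abs G$ bits; the recurrence is $R_i = \Set{g\cdot h}{g \in R_{i-1},\ h \in \phi(c_i)}$, with $R_0 = \{1\}$.

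First I would note that from the Cayley table one can precompute, for each $c \in G$, the conjugacy class $\phi(c)$: iterate over all $z \in G$, look up $z^{-1}$ (a single pass finds all inverses) and then $z^{-1}cz$ via two table look-ups, marking the result. This takes $O(\abs G^2)$ per element, but we only need it for the (at most $k$) distinct values actually occurring among $c_1,\dots,c_k$, and in any case $O(\abs G^2)$ suffices to build a table of all conjugacy classes once and for all — well within the claimed budget. Then, for $i = 1,\dots,k$, compute $R_i$ from $R_{i-1}$ and $\phi(c_i)$: for each $g \in R_{i-1}$ and each $h \in \phi(c_i)$, look up $gh$ in the Cayley table in $O(1)$ and add it to $R_i$. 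This costs $O(\abs{R_{i-1}}\cdot\abs{\phi(c_i)}) = O(\abs G^2)$ per step, hence $O(k\cdot\abs G^2)$ over all $k$ steps. Finally, test $1 \in R_k$. Correctness is exactly \cref{thm: finitegroup}: by induction $R_i = \phi(c_1\cdots c_i)$, and $1 \in R_k$ iff the equation has a solution.

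A minor refinement lets one shave the preprocessing into the same bound: rather than computing all conjugacy classes, maintain a running "reachable set" and expand lazily, but this is not needed since $O(\abs G^2) \leq O(k\cdot\abs G^2)$ already (and if $k$ is tiny one can instead bound the per-step cost by $\abs G\cdot\abs G$ regardless). One should also remark that a random access machine is assumed so that table look-ups and the membership/marking operations on subsets of $G$ (indexed by the integers $1,\dots,\abs G$) are $O(1)$; this is where the RAM hypothesis in the statement is used.

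The only real subtlety — and the step I would be most careful about — is the accounting: one must make sure the set $R_i$ does not need to be recomputed from scratch at a cost of $\abs G^2$ together with a further factor, and that the update genuinely costs $O(\abs G^2)$ and not $O(\abs G^3)$. Since $\abs{R_{i-1}} \le \abs G$ and $\abs{\phi(c_i)} \le \abs G$, the double loop is $O(\abs G^2)$, and summing over the $k$ letters gives exactly $k\cdot\abs G^2$; there is no hidden third factor because each product $gh$ is obtained by a single Cayley-table look-up. This is genuinely the whole argument, so I expect no essential obstacle beyond presenting the bookkeeping cleanly.
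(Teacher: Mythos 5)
Your proposal is correct and follows essentially the same route as the paper: the paper defines $V_j=\Set{\prod_{i=1}^j z_i^{-1}c_iz_i}{z_i\in G}$, represents each as a Boolean array of length $\abs G$, and updates $V_{j}\to V_{j+1}$ by multiplying each element of $V_j$ with each conjugate of $c_{j+1}$, at $O(\abs G^2)$ per step — precisely your sets $R_i$ and double-loop update. Your extra preprocessing remark and the $\phi$-bookkeeping are cosmetic variations, not a different argument.
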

\begin{proof}
As above, we can assume that the elements in $G$ are the numbers $1, \ldots, \abs{G}$. Proceeding by induction on $k$,
we construct sets
\begin{align*}
V_j=\Set{\prod_{i=1}^j z_i^{-1}c_iz_i}{z_i\in G}.
\end{align*}
Such a set $V_j$ can be represented as a Boolean array of length $\abs{G}$. Each entry indicates whether the corresponding group element is contained in $V_j$.
 To construct $V_{j+1}$ from $V_j$ we need to multiply each element in $V_j$ with each element in $\{z^{-1}c_{j+1}z~|~z \in G\}$ and set the corresponding bit in $V_{j+1}$. Since $\abs{V_j}\leq \abs{G}$, we need at most $\Oh(\abs{G}^2)$ look-ups in the Cayley table.

Hence, we can construct the sets $V_1, \ldots, V_k$ in time $\Oh(k \cdot \abs{G}^2)$ and then check if $1 \in V_k$. \qed\end{proof}

    Note that, if $G,H$ are finite groups and $g_i \in G$, $h_i \in H$ for $i \in \interval{1}{k}$, then $((g_1, h_1), \ldots, (g_k, h_k)) \in \DPSE{G \times H}$ if and only if $(g_1, \ldots, g_k) \in \DPSE{G}$  and $(h_1, \ldots, h_k) \in \DPSE{H}$. Hence, we get the following corollary of \cref{cor: DPforFixedG} (when representing elements of $G^{n}$ as tuples of elements of $G$):

\begin{corollary}\label{lem: DirectProd}
  Let $G$ be a fixed finite group, and let $\CG=(G^{n})_{n \in \N}$, $n$ given in unary with $G^{n}=\underbrace{G \times \cdots \times G}_{n \text{ times }}$. Then,  $\DPSE{\CG} \in \ACC$. 
\end{corollary}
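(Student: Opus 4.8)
The plan is to reduce the claim to \cref{cor: DPforFixedG} via the coordinatewise characterisation of $\DPSE{G\times H}$ stated just before the corollary. First I would iterate that observation: by an easy induction on $n$, a tuple $(\ovc_1,\dots,\ovc_k)$ with each $\ovc_i\in G^n$ lies in $\DPSE{G^n}$ if and only if for every coordinate $j\in\interval{1}{n}$ the projected tuple $(c_1^{(j)},\dots,c_k^{(j)})\in G^k$ lies in $\DPSE{G}$. With elements of $G^n$ encoded as length-$n$ blocks over the fixed finite alphabet $G$ (as stated in the corollary), extracting the $j$-th projection is just selecting the symbols in positions $j,\,n+j,\,\dots,\,(k-1)n+j$ of the input.

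Next I would invoke \cref{cor: DPforFixedG} to fix a uniform $\ACC$ circuit family $(C_m)_{m\in\N}$ where $C_m$ decides, on input $c_1\cdots c_m\in G^m$, whether $(c_1,\dots,c_m)\in\DPSE{G}$. The circuit for $\DPSE{G^n}$, on an input consisting of $n$ (given in unary) together with the $kn$ alphabet symbols, then does the following in parallel for each $j\in\interval{1}{n}$: it wires the $j$-th projection into a copy of $C_k$; finally it takes the conjunction of the $n$ output bits with a single unbounded-fan-in $\mathrm{AND}$ gate. Since each $C_k$ has constant depth and size polynomial in $k$, and we use $n$ such copies plus one extra gate, the resulting family has constant depth and size polynomial in $nk$, i.e.\ polynomial in the input length; hence it is an $\ACC$ circuit family, and it is correct by the characterisation of the previous paragraph.

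The only point that needs a little care is uniformity: the shape of the circuit now depends on the input parameter $n$, so one must verify that the connection language of the family remains uniform in the sense of \cite{Vollmer99}. This is straightforward because $n$ is supplied in unary and the circuit is nothing more than $n$ disjoint copies of the fixed uniform family $(C_m)_{m\in\N}$ feeding one $\mathrm{AND}$ gate; the description of the $j$-th copy is obtained from that of $C_k$ by an index shift that is computable within the required resource bound. I expect this bookkeeping to be the main (and only mild) obstacle; everything else is immediate from the coordinatewise characterisation and \cref{cor: DPforFixedG}.

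As an alternative phrasing, the whole argument can be made monoid-theoretic: with $\phi$ as in the proof of \cref{thm: finitegroup}, membership in $\DPSE{G^n}$ amounts to checking $1\in\prod_{i=1}^{k}\phi(c_i^{(j)})$ inside the fixed commutative monoid $\cP(G)$ for every $j\in\interval{1}{n}$, i.e.\ to $n$ independent instances of the word problem of a fixed commutative (hence solvable) monoid, which lies in $\ACC$ by \cite{McKenziePT91}; combining the $n$ answers with an $\mathrm{AND}$ gate keeps us in $\ACC$ precisely because $n$ is given in unary.
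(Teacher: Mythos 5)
Your proposal is correct and follows the same route as the paper: iterate the coordinatewise characterisation of $\DPSE{G\times H}$ to reduce $\DPSE{G^n}$ to $n$ independent instances of $\DPSE{G}$, then invoke \cref{cor: DPforFixedG} and combine the answers (the paper leaves the $\ACC$-uniformity bookkeeping implicit, which you spell out, and your monoid-theoretic rephrasing is likewise just the proof of \cref{thm: finitegroup} applied coordinatewise).
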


\begin{remark}\label{rem:}
    By \cref{prop: finiteGroups}, in any input model $\DPSE{(G_n)_{n\in \N}}$ is in \P if $\abs{G_n}$ is polynomial in the size of the description of $G_n$ given that the input model allows for a polynomial-time multiplication of group elements and the group elements are reasonably encoded (which applies to any reasonable input model). The reason is that first,  on input of the group $G_n$ and a spherical equation, we can compute the multiplication table of $G_n$ and then apply \cref{prop:CayNL}.
 On the other hand, by \cref{lem: DirectProd} there are classes/sequences of groups such that $\abs{G_n}$ is exponential in $n$ but $\DPSE{(G_n)_{n \in \N}}$ is in \ACC. 
\end{remark}

\section{The groups $S_n$ and $A_n$}

In this section we consider the \emph{permutation group} input model and we will show that $\DPSE{(S_n)_{n \in \N}}$ and $\DPSE{(A_n)_{n \in \N}}$ are $\NP$-complete. Observe that $\abs{S_n}$ and $\abs{A_n}$ are not polynomial in $n$. By \cref{prop:inNP} we already know that
    $\DPSE{(S_n)_{n \in \N}}$ and $\DPSE{(A_n)_{n \in \N}}$ are in \NP.

\begin{theorem}\label{thm: DPforSn}
 $\DPSE{(S_n)_{n \in \N}}$ is $\NP$-complete.
\end{theorem}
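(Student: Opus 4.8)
The plan is to prove $\NP$-hardness by reducing from \threepart, which is $\NP$-complete even with the numbers encoded in unary (this matters because $n$ in $S_n$ is given in unary, so we need the gadget sizes to be polynomial). Given a \threepart instance $S = (a_1, \ldots, a_{3k})$ with target sum $L = \frac1k \sum a_i$, I would build $n = \Oh(\sum a_i)$ and construct constants $c_1, \ldots, c_{3k}$ together with $k$ additional ``frame'' constants so that the spherical equation $\prod z_i^{-1} c_i z_i = 1$ is solvable if and only if the indices $\interval{1}{3k}$ can be partitioned into $k$ triples each summing to $L$.

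The key idea for the gadget: take $k$ pairwise disjoint blocks of $L$ points each, and let the ``slot'' constants be $k$ disjoint $L$-cycles $\tau_1, \ldots, \tau_k$ (one per block). For each $a_i$ let $c_i$ be an $a_i$-cycle (on some fixed set of $a_i$ points). Since conjugation in $S_n$ preserves cycle structure (\cref{lem: ConjClassSn}), a conjugate of $c_i$ is again an $a_i$-cycle, and a conjugate of $\tau_j$ is again an $L$-cycle. Using \cref{lem: SumSuppSn} and the constraint $L/4 < a_i < L/2$ (so no two of the $a_i$ fit in a block alone, and exactly three are needed), I would argue that the only way a product of conjugates of the $c_i$ together with conjugates of the $\tau_j$ can multiply to the identity is if, within each block, the three $a_i$-cycles placed there "fill up" the block and compose with the $L$-cycle of that block to give the identity — which forces $a_{i_1} + a_{i_2} + a_{i_3} = L$ for the triple assigned to block $j$. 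Conversely, given a valid partition, one can explicitly conjugate each $a_i$-cycle onto a consecutive arc of the appropriate block and choose the arcs so that the product within each block telescopes to the inverse of $\tau_j$; by \cref{lem:reorder} the order of the factors is irrelevant. I need to be slightly careful that an $L$-cycle can indeed be written as a product of three cycles of lengths summing to $L$ supported on (overlapping, consecutive) arcs — this is a direct computation with cycles and can be checked by hand.

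The main obstacle I anticipate is the "only if" direction: ruling out solutions that "cheat" by spreading a single $c_i$ across the boundary between two blocks or by having the conjugates of the $\tau_j$ move points between blocks in a way that still cancels. The clean way to handle this is a counting/support argument: the total support available is $\sum_j \abs{\supp{\tau_j}} + \sum_i \abs{\supp{c_i}} = kL + \sum a_i = 2kL$, and for the product to be trivial \cref{lem: SumSuppSn} forces a lot of cancellation of supports; tracking how much cancellation is possible, combined with the fact that each $\tau_j$ is a single $L$-cycle that must be entirely cancelled, should pin each $c_i$ to exactly one block. An alternative, possibly cleaner, route is to use an abelianization-type homomorphism or a colouring invariant: assign block $j$ a ``colour'' and observe that conjugation by an element fixing the block decomposition preserves which colours a permutation moves; but since solutions need not respect the block decomposition, I would instead lean on the rigidity coming from $\abs{\cyc{\cdot}}$ and $\sgn{\cdot}$ being conjugacy invariants. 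I expect the write-up to require a careful lemma stating that if a product of the given conjugates equals $1$ then the multiset of cycle lengths within each orbit-block is forced.

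Finally, for the $A_n$ statement (mentioned as also $\NP$-complete), the same construction works after padding: either choose the $a_i$-cycles and $L$-cycles to have even length throughout (adjusting by a constant, replacing each $a_i$ by $2a_i$ and $L$ by $2L$, which keeps the $L/4 < a_i < L/2$ ratio), or append a fixed even permutation to correct parity, so that all constants lie in $A_n$ and one may take the variables $z_i$ in $A_n$ as well — here one checks that a conjugator in $S_n$ can be replaced by one in $A_n$ since the stabilizer of a cycle contains odd permutations. Membership in $\NP$ is already given by \cref{prop:inNP}, so combining the two directions yields $\NP$-completeness.
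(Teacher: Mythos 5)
Your high-level plan — reduce from \threepart using cycle lengths — is the same as the paper's, and \cref{prop:inNP} correctly handles membership. But the specific gadget you propose fails, and the failure is exactly at the point you flag as ``a direct computation with cycles and can be checked by hand.'' An $L$-cycle \emph{cannot} be written as a product of three cycles whose lengths sum to $L$. Concretely, multiplying cycles of lengths $p$ and $q$ that overlap in exactly one point gives a cycle of length $p+q-1$ (e.g.\ $(1,\ldots,p)(p,\ldots,p+q-1)=(1,\ldots,p+q-1)$), so three overlapping cycles of lengths $a_{i_1},a_{i_2},a_{i_3}$ with $a_{i_1}+a_{i_2}+a_{i_3}=L$ produce, at best, an $(L-2)$-cycle. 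More generally, for the product of three cycles to be a single $L$-cycle, the three supports cannot be pairwise disjoint (disjoint cycles stay disjoint), so $|\supp{c_{i_1}^{z}c_{i_2}^{z'}c_{i_3}^{z''}}|\leq a_{i_1}+a_{i_2}+a_{i_3}-1<L$ by \cref{lem: SumSuppSn}. Thus the ``if'' direction of your reduction has no valid witness. The paper's construction avoids exactly this off-by-one trap by taking $c_\ell$ to be an $(a_\ell+1)$-cycle and targeting disjoint $(L+1)$-cycles on blocks of size $L+1$ (so $n=k(L+1)$); then the three conjugates overlap in exactly two points total and telescope to $(1,\ldots,L+1)$.

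A second, less fatal, point: you introduce $k$ independent frame constants $\tau_1,\ldots,\tau_k$ and target the identity, whereas the paper keeps the right-hand side as a single element $c^\ast$ (a product of $k$ disjoint long cycles), i.e.\ a spherical equation with $3k+1$ coefficients in the normalized form \eqref{AlternativeSE}. This is not merely cosmetic: since each $\tau_j$ gets its own conjugator, the conjugates $\tau_j^{z_j}$ need not remain disjoint, which destroys the clean cycle-counting argument. The paper's ``only if'' direction hinges precisely on the fact that the target $c^\ast$ has a fixed cycle type $[L+1,\ldots,L+1]$ with $|\cyc{c^\ast}|=k$: starting from $3k$ cycles with $\Ssupp{\cdot}=kL+3k$, one needs at least $2k$ merges of non-disjoint cycles to reach $k$ cycles, each merge loses at least one unit of $\Ssupp{\cdot}$, and the arithmetic $kL+3k-2k=k(L+1)=|\supp{c^\ast}|$ leaves no slack, so every merge loses exactly one and no $c_i$ can ``cheat'' across a block boundary. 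With your $k$ free $\tau_j$'s and target $1$, the corresponding count has slack and this rigidity argument does not close. So even after fixing the off-by-one, you would need a genuinely new argument (or should switch to the paper's single-$c^\ast$ formulation).

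Your treatment of $A_n$ (double the $a_i$ to make the cycles even, pad $n$ by $2$ so the conjugacy classes do not split) matches the paper once the underlying $S_n$ reduction is repaired.
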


\newcommand{\sumA}[2]{\gamma^{#2}_{#1}}

\begin{proof}
To show $\NP$-hardness, we describe a reduction from \threepart (see \cref{sec:NP}):
Let $A=(a_1,\ldots,a_{3k})$ be an instance of \threepart with $L=\tfrac{1}{k}\sum_{i=1}^{3k} a_i$ and  $\frac{L}{4} < a_i < \frac{L}{2}$. We define cycles
$c_{1},\ldots,c_{3k} \in S_n$ for $n = k(L+1)$ and a product of disjoint $L+1$-cycles $c^\ast$ such that 
\begin{align}\label{defCandCstar}
    c_\ell &= (1,\ldots, a_{\ell}+1),\\
  \nonumber  c^\ast &= 
\prod_{i=1}^{k}
\bigl((i-1)(L+1)+1,\ldots,i(L+1)\bigr).
\end{align}
We claim that the spherical equation (see also (\ref{AlternativeSE}))
\begin{align} \label{EqRedSn}
    \prod_{i=1}^{3k} z_i^{-1} c_i z_i = c^\ast
\end{align}
has a solution in $S_n$ if and only if $A$ is a positive instance of \threepart. 

\newcommand{\ztv}{v}
Assume that we have a partition of  $\interval{1}{3k}$ into (disjoint) sets $A_i=\{i_1, i_2, i_3 \}$ with  $a_{i_1}+a_{i_2}+a_{i_3}=L$ (by a slight abuse of notation we use indices $i_1,i_2,i_3$ for elements of the set $A_i$). 
By \cref{lem: ConjClassSn} there exist $x_i, y_i \in S_n$ such that 
\begin{align*}
    x^{-1}_ic_{i_2}x_i&=(a_{i_1}+1,\ldots, a_{i_1}+a_{i_2}+1 ),\\
    y^{-1}_ic_{i_3}y_i&=(a_{i_1}+a_{i_2}+1,\ldots, a_{i_1}+a_{i_2}+a_{i_3}+1 ).
\end{align*}
Then,  $(1,  \ldots, L+1) = c_{i_1}\, x^{-1}_ic_{i_2}x_i\, y^{-1}_ic_{i_3}y_i  $. Thus there exists $\ztv_i$ such that
\begin{align*}
((i-1)\cdot (L+1)+1, \ldots,  i \cdot (L+1))
&=\ztv^{-1}_ic_{i_1}\ztv_i(\ztv_ix_i)^{-1}c_{i_2}(\ztv_ix_i)(\ztv_iy_i)^{-1}c_{i_3}(\ztv_iy_i).
\end{align*}
We obtain that
\begin{align*}
    \prod_{i=1}^k\ztv^{-1}_ic_{i_1}\ztv_i\; (\ztv_ix_i)^{-1}c_{i_2}(\ztv_ix_i) \; (\ztv_iy_i)^{-1}c_{i_3}(\ztv_iy_i)=c^\ast
\end{align*}
and hence, by \cref{lem:reorder} there exists a solution for (\ref{EqRedSn}). 
 \\

Now assume that (\ref{EqRedSn}) has a solution. Because $c^\ast$ is a product of $k$ disjoint cycles and there are $3k$ cycles $c_j^{z_j}$, there exist $i \neq j$ with  $\abssmall{\supp{c_i^{z_i}}}\cap\abssmall{\supp{c_j^{z_j}}}\neq \emptyset$.
Starting with $  \prod_{i=1}^{3k} c_i^{z_i}$,   we subsequently multiply neighbouring non-disjoint cycles and, if necessary, permute disjoint cycles. By \cref{lem: DisjCycSn} (\ref{UDisDec}), we eventually obtain the disjoint cycle decomposition of $c^\ast$.
Because $\abs{\cyc{c^\ast}}=k$ we need to multiply at least $2k$ non-disjoint cycles $\sigma_1, \sigma_2$ with $\abs{\cyc{\sigma_1\sigma_2}}=1$.
So, by \cref{lem: SumSuppSn}, $\Ssupp{\,\cdot\, }$ decreases by at least $2k$. Hence,
\begin{align*}
    k(L+1)=kL+3k-2k=\Ssupp{c^{z_1}_{1}, \ldots, c^{z_{3k}}_{3k}}-2k \geq \abs{\supp{c^\ast}}=k(L+1).
\end{align*}
Thus, no further multiplications of non-disjoint cycles are possible.  
This implies that for each $i \in \interval{1}{k}$ there exists $A_i=\{i_1, \ldots,i_{h_i}\}$ such that 
\begin{align*}
\prod_{\mu=1}^{h_i}z^{-1}_{i_\mu}c_{i_\mu}z_{i_\mu}=((i-1) \cdot (L+1) +1, \ldots, i(L+1)).
\end{align*} 

Thus, $\sum_{\mu=1}^{h_i}a_{i_\mu}\geq L$ and  $a_i<\frac{L}{2}$ imply $h_i \geq 3$ for $ i \in \interval{1}{k}$.
The index set has size $3k$, so $h_i=3$ for each $i$.
Because $k \cdot L = \sum_{i=1}^{3k} a_i $, we obtain $a_{i_1}+a_{i_2}+a_{i_3}=L$ for all $i$.
Thus, $A$ is a positive instance of \threepart. 
 \qed\end{proof}

\paragraph*{The alternating group $A_n$}

\begin{lemma}\label{lem: spltAn}
Let $c$  be a cycle of length $k$ in $A_n$ with $k \leq n-2$. Then, the conjugacy class of $c$ does not split in $A_n$ (i.e.\ $\{c^x \mid x\in A_n\} = \{c^x\mid x \in S_n\}$). 
\end{lemma}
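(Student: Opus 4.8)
The statement is the classical fact that the conjugacy class of an element of $S_n$ splits in $A_n$ if and only if all cycle lengths are odd and distinct; here we only need the (easier) direction that a single $k$-cycle with $k \le n-2$ does \emph{not} split. The plan is to exhibit an \emph{odd} permutation that centralizes $c$; then conjugation by it shows $c^x$ for $x \in S_n \setminus A_n$ already lies in the $A_n$-orbit, so the two orbits coincide. Concretely, write $c = (i_1, i_2, \ldots, i_k)$ with the $i_j$ pairwise distinct in $\interval{1}{n}$. Since $k \le n-2$, there exist two distinct indices $a, b \in \interval{1}{n}$ not appearing in $c$, i.e.\ $\{a,b\} \cap \supp{c} = \emptyset$. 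Set $\tau = (a,b)$, a transposition; then $\sgn{\tau} = -1$, so $\tau \notin A_n$, and $\tau$ commutes with $c$ because their supports are disjoint (\cref{lem: DisjCycSn}(a), or directly \cref{lem: SumSuppSn}).

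\textbf{Main step.} Let $x \in S_n$ be arbitrary. If $x \in A_n$ there is nothing to show, so assume $\sgn{x} = -1$. Then $\tau x \in A_n$ since $\sgn{\tau x} = \sgn{\tau}\sgn{x} = 1$, and because $\tau$ centralizes $c$ we compute
\begin{align*}
c^{\tau x} = x^{-1}\tau^{-1} c\, \tau x = x^{-1} c\, x = c^x.
\end{align*}
Hence $c^x = c^{y}$ with $y = \tau x \in A_n$, so $\{c^x \mid x \in S_n\} \subseteq \{c^y \mid y \in A_n\}$; the reverse inclusion is trivial since $A_n \subseteq S_n$. This gives the claimed equality of conjugacy classes.

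\textbf{Remarks on obstacles.} There is essentially no obstacle here: the only thing to be careful about is the hypothesis $k \le n-2$, which is exactly what guarantees that $\supp{c}$ omits at least two points so that a transposition disjoint from $c$ exists. (If $k = n$ or $k = n-1$, the support of $c$ is all of $\interval{1}{n}$ up to one point, no such transposition exists, and for odd cycles the class genuinely does split — but that case is excluded.) One should also note that $k$ being a cycle length with the cycle lying in $A_n$ forces $k$ to be odd by \cref{lem: OddaEvenPerm}, though this parity is not actually needed for the argument above; disjointness of supports alone does the work. I would phrase the write-up in the two short paragraphs above, invoking \cref{lem: DisjCycSn} for the commuting of disjoint cycles.
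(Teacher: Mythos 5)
Your proof takes essentially the same approach as the paper: both construct an odd permutation $\tau$ whose support is disjoint from $\supp{c}$ (using $k \le n-2$), hence centralizing $c$, and then replace any odd conjugator $x$ by the even conjugator $\tau x$. Your write-up is if anything slightly more careful, since you explicitly pick $\tau$ to be a transposition so that $\sgn\tau = -1$, a point the paper's proof leaves implicit.
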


We even have the following (see \cite[Exercises for Ch. 5]{AschbacherFGT}): For $c \in A_n$,  $\{c^x \mid x\in A_n\} \neq \{c^x\mid x \in S_n\}$  if and only if $\cyc{c}$ only consists of distinct odd numbers (be aware that here we have to also count the cycles of length one for points that are not moved). 
\begin{proof}
Let $\alpha, \beta \in S_n$ such that $\sgn{\alpha}=\sgn{\beta}=-1$. Because $k \leq n-2$ there exists $\tau \in S_n$ with $\supp{c}\cap \supp{\tau}=\emptyset$. This implies that $\tau c= c\tau$ and 
\begin{align*}
d=\beta^{-1} c\beta=\beta^{-1} c \tau^{-1} \tau\beta=
(\tau\beta )^{-1}c( \tau\beta)
\end{align*}
with $\sgn{\tau\beta}=1$.
So, if $c$ is conjugated to $d$ by some $\beta \in S_n \setminus A_n$ they are already conjugated in $A_n$. Thus the conjugacy class of $c$ does not split in $A_n$. \qed\end{proof}

\begin{corollary}\label{cor: SEAn}
     $\DPSE{(A_n)_{n \in \mathbb{N}}}$ is \NP-complete.
\end{corollary}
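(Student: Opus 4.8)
The plan is to reduce \cref{cor: SEAn} to \cref{thm: DPforSn}: containment in \NP\ already follows from \cref{prop:inNP}, so only \NP-hardness needs argument, and I would like to reuse the \threepart reduction from the proof of \cref{thm: DPforSn} almost verbatim. The obstacle is that the cycles $c_\ell = (1,\dots,a_\ell+1)$ used there are cycles of length $a_\ell+1$, which may be even (hence odd permutations), and likewise $c^\ast$ is a product of $(L+1)$-cycles which need not lie in $A_n$. So the first thing to check is whether the right-hand side and all constants can be made to live in $A_n$ while preserving the whole argument.

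First I would observe that the equation $\prod_{i=1}^{3k} z_i^{-1} c_i z_i = c^\ast$ can only be solvable when $\prod_i c_i$ and $c^\ast$ have the same sign, since conjugation preserves sign and $\sgn$ is a homomorphism. In the positive-instance direction of the original proof this is automatic; in general I would force all constants into $A_n$ by padding. Concretely, enlarge $n$ slightly and replace each cycle $c_\ell=(1,\dots,a_\ell+1)$ by the product of $c_\ell$ with a fixed transposition on two brand-new points $p_\ell, q_\ell$ (disjoint from everything, including from each other across different $\ell$) exactly when $a_\ell+1$ is even — this makes every modified constant $\tilde c_\ell$ an even permutation without changing its cycle type on the "working" points, so \cref{lem: ConjClassSn} still applies. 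Similarly replace $c^\ast$ by $\tilde c^\ast = c^\ast \cdot \prod_{\ell : a_\ell+1 \text{ even}} (p_\ell,q_\ell)$ — since each factor of $c^\ast$ is an $(L+1)$-cycle, we add a transposition on the new points for each even factor as needed so that $\tilde c^\ast \in A_n$. The new points are permuted only by the $(p_\ell,q_\ell)$ transpositions, so a counting/support argument on the new coordinates shows that in any solution the transposition on $p_\ell,q_\ell$ coming from $\tilde c_\ell$ must be "used up" against the matching one in $\tilde c^\ast$, which pins down the combinatorics exactly as before; meanwhile on the original points the argument of \cref{thm: DPforSn} goes through unchanged.

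An alternative, cleaner route — which I would present if the padding bookkeeping gets heavy — is to invoke \cref{lem: spltAn} directly. If we instead pad so that each $\tilde c_\ell$ and $\tilde c^\ast$ is a single cycle (or a product of disjoint cycles) of length $\le n-2$ that lies in $A_n$, then \cref{lem: spltAn} guarantees that the $A_n$-conjugacy class of each constant coincides with its $S_n$-conjugacy class. Consequently a spherical equation over $A_n$ with these constants is solvable in $A_n$ if and only if it is solvable in $S_n$: one direction is trivial; for the other, given an $S_n$-solution $z_i$, each individual conjugate $z_i^{-1}\tilde c_i z_i$ can be realized by some $w_i \in A_n$ by non-splitting, and $\prod_i w_i^{-1}\tilde c_i w_i = \prod_i z_i^{-1}\tilde c_i z_i = \tilde c^\ast$ still holds. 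So it suffices to ensure the original reduction instance has all constants of cycle-length $\le n-2$ in $A_n$, which the padding above (enlarging $n$ by a constant number of fixed points, plus the transpositions) achieves.

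The main obstacle is making sure the padding does not break the "no further multiplications possible" support-counting step in the hard direction of \cref{thm: DPforSn}'s proof: the added transpositions contribute to $\Ssupp{\cdot}$ and to $\abs{\supp{\tilde c^\ast}}$, and I need the arithmetic $k(L+1) + (\text{new support}) - 2k - (\text{forced cancellations among new points}) \ge \abs{\supp{\tilde c^\ast}}$ to again be an equality, forcing the same partition structure. I expect this to work because the new points form an entirely separate, rigid block: each $(p_\ell,q_\ell)$ appears exactly once among the constants and exactly once in $\tilde c^\ast$, so it must cancel against itself and contributes $0$ net slack. Once that is verified, $\DPSE{(A_n)_{n\in\N}}$ is \NP-hard, and with \cref{prop:inNP} we get \NP-completeness. \qed
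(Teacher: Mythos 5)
Your instinct to invoke \cref{lem: spltAn} (the non-splitting of conjugacy classes) is exactly the right key tool, and your ``alternative, cleaner route'' is morally the paper's argument: if the constants lie in $A_n$ and their conjugacy classes do not split, then an $S_n$-solution can be replaced conjugate-by-conjugate with an $A_n$-solution, and the converse is trivial. However, the mechanism you choose for getting the constants into $A_n$ — padding each odd constant with a transposition on fresh points — introduces complications you do not resolve, and there is a much simpler device that avoids them entirely.

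The paper's trick: replace each $a_i$ by $2a_i$. The resulting instance of \threepart is still \NP-hard, $L$ becomes even, and so every cycle length $a_\ell+1$ and $L+1$ is \emph{odd}; hence, by \cref{lem: OddaEvenPerm}, the unmodified $c_\ell$ and $c^\ast$ from (\ref{defCandCstar}) already lie in $A_n$. Choosing $n = k(L+1)+2$ gives two extra fixed points, so every $c_\ell$ is a single cycle of length $\le n-2$, \cref{lem: spltAn} applies directly, and the entire $S_n$ reduction (both directions, including the support-counting argument) carries over verbatim.

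Your padding route has two concrete gaps. First, a padded constant $\tilde c_\ell = c_\ell \cdot (p_\ell,q_\ell)$ is no longer a single cycle, so \cref{lem: spltAn} as stated does not apply to it; you would need the more general characterization (the class splits iff the cycle type consists of distinct odd lengths), which is mentioned only in passing after \cref{lem: spltAn} and not proved. Second — and you flag this yourself as ``the main obstacle'' — the padding changes the cycle types of the constants and the target, which means the delicate support-counting argument in the hard direction of \cref{thm: DPforSn} must be re-established for the modified instance. Your heuristic that the new points form a ``rigid block contributing zero net slack'' is plausible but is exactly the claim that needs proof, and you do not supply it. Since a simpler modification (doubling) makes both issues vanish, the padding machinery is not only unverified but also unnecessary.
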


\begin{proof}
Notice that \threepart is still \NP-hard  if  we only consider instances $(a_1, \ldots, a_{3k})$ with all $a_i$ being even (just replace each $a_i$ by $2a_i$). Then, by \cref{lem: OddaEvenPerm}, $c_{i}$ and $c^*$   as  defined in  (\ref{defCandCstar}) are in $A_n$. Choosing $n = k(L+1) + 2$, the conjugacy class of each $c_{i}$ does not split in $A_n$ (\cref{lem: spltAn}). So we can use the same reduction as for $S_n$. 
\qed\end{proof}

Thus, there is a sequence of simple groups $\cG$ such that $\DPSE{\cG}$ is \NP-hard though \cref{lem: SimplGroup} suggests that it might be easy for simple groups. Notice that $|A_n|^3-|A_n|+1$ is not polynomial in $n$.

\begin{remark}
Observe that, if we consider the \emph{subgroup} input model,  \cref{thm: DPforSn} and \cref{cor: SEAn} imply that  $\DPSub{(S_n)_{n\in \N}}$ and $\DPSub{(A_n)_{n\in \N}}$ are \NP-complete.
\end{remark}

\section{Spherical equations in dihedral groups} \label{sec: Dihrgrps}
\newcommand{\Del}[1]{\Delta^{#1}}
The dihedral group $D_n$ of order $2n$ is given by the group presentation
\[D_n=\Gen{r,s}{r^n=s^2=1, srs=r^{-1}}.\]
We can embed $D_n$ into the group  $\GL{2}{n}$: 
Let $R=\small\begin{pmatrix}
1 & 1 \\
0 &1
\end{pmatrix}\normalsize$ and $ S=\small\begin{pmatrix}
1 &0 \\
0 &-1
\end{pmatrix}\normalsize$. Then, $D_n \cong \gen{R,S}$. Because $\gen{R}\cong \Z_n$ and $\gen{S}\cong C_2$ we get a presentation of  $D_n$ as semi-direct product: $D_n \cong \Z_n \rtimes C_2$ with $C_2$ acting on $\Z_n$ by $(k,\delta) \mapsto \delta k$.
So,  $D_n=\Set{(k,\delta)}{k\in\MZ_n,\ \delta=\pm1}$
 with multiplication defined by
\begin{align}\label{multDn}
    (k_1,\delta_1)(k_2,\delta_2)=
(k_1+\delta_1 k_2,\delta_1\cdot\delta_2).
\end{align}

\begin{lemma} 
Let $(h_\ell,\gamma_\ell), (k_\ell,\delta_\ell)\in D_n$ for $i \in \interval{1}{m}$. Then,
\begin{align}\label{FormSE}
\prod_{\ell=1}^{m}(h_\ell,\gamma_\ell)(k_\ell, \delta_\ell)(h_\ell,\gamma_\ell)^{-1}=\left(\sum_{\ell=1}^{m} \Delta^{\ell-1}\gamma_\ell k_\ell+\sum_{\ell=1}^{m}\Delta^{\ell-1} (1-\delta_\ell)h_\ell, \Delta^{m}\right)
\end{align}
\end{lemma}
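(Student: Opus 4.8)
The statement is a closed formula for a product of $m$ conjugates in the dihedral group $D_n \cong \Z_n \rtimes C_2$, where I write $\Delta = \prod_{\ell=1}^m \delta_\ell$ (so $\Delta^{m}$ in the statement should be read as this total product of the $\delta_\ell$; I should double-check the indexing, but morally the second coordinate of the product is simply the product of all the $\delta_\ell$, since $\gamma_\ell$ and $\gamma_\ell^{-1}$ cancel in the second coordinate). The natural approach is a straightforward induction on $m$ using the multiplication rule \eqref{multDn}. First I would record the two basic facts I need: the inverse $(h,\gamma)^{-1} = (-\gamma h, \gamma)$ (using $\gamma = \gamma^{-1}$ in $C_2$), and the conjugation formula $(h,\gamma)(k,\delta)(h,\gamma)^{-1}$, which a direct computation with \eqref{multDn} shows equals $(\gamma k + (1-\delta)h,\ \delta)$. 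This single-factor identity is the heart of the matter; everything else is bookkeeping.

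Next I would set up the induction. Let me denote by $P_m$ the left-hand side product. The base case $m=1$ is exactly the conjugation formula just derived. For the inductive step, write $P_{m} = P_{m-1} \cdot \big((h_m,\gamma_m)(k_m,\delta_m)(h_m,\gamma_m)^{-1}\big)$; by the induction hypothesis the first factor is $\big(\sum_{\ell=1}^{m-1}\Delta_{\ell-1}\gamma_\ell k_\ell + \sum_{\ell=1}^{m-1}\Delta_{\ell-1}(1-\delta_\ell)h_\ell,\ \Delta_{m-1}\big)$ where $\Delta_j = \prod_{i=1}^{j}\delta_i$ (with $\Delta_0 = 1$), and by the base case the second factor is $(\gamma_m k_m + (1-\delta_m)h_m,\ \delta_m)$. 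Multiplying these two via \eqref{multDn}: the second coordinates multiply to $\Delta_{m-1}\delta_m = \Delta_m$, and the first coordinate becomes (first coordinate of $P_{m-1}$) $+\ \Delta_{m-1}\cdot$(first coordinate of the $m$-th factor), which is exactly $\sum_{\ell=1}^{m}\Delta_{\ell-1}\gamma_\ell k_\ell + \sum_{\ell=1}^{m}\Delta_{\ell-1}(1-\delta_\ell)h_\ell$. This closes the induction.

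The only genuinely delicate point — really the main thing to get right rather than a true obstacle — is matching the notation $\Delta^{\ell-1}$ in the claimed formula with the partial products $\Delta_{\ell-1} = \prod_{i=1}^{\ell-1}\delta_i$: the superscript is not a power but an abbreviation introduced by the macro $\Del{}$, standing for the running product of the $\delta_i$, so I should make sure the exponent conventions in \eqref{FormSE} are stated unambiguously (in particular $\Delta^0 = 1$ and $\Delta^m$ is the product over all $m$ factors). Once that is pinned down, there is nothing subtle: the proof is a two-line base case plus a one-line inductive multiplication. I would present it compactly, doing the single-factor conjugation computation explicitly and then invoking induction.

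\begin{proof}
Write $\Delta^{j} = \prod_{i=1}^{j}\delta_i$ for $0 \le j \le m$ (so $\Delta^0 = 1$). Using \eqref{multDn} one checks that $(h,\gamma)^{-1} = (-\gamma h,\gamma)$, and hence, for any $(k,\delta) \in D_n$,
\begin{align*}
(h,\gamma)(k,\delta)(h,\gamma)^{-1} = (h + \gamma k,\ \gamma\delta)(-\gamma h,\gamma) = \bigl(h + \gamma k - \gamma\delta\gamma h,\ \gamma\delta\gamma\bigr) = \bigl(\gamma k + (1-\delta)h,\ \delta\bigr),
\end{align*}
since $\gamma\delta\gamma = \delta$ and $\gamma\delta\gamma h = \delta h$ in $\Z_n$. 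This is \eqref{FormSE} for $m = 1$.

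Now proceed by induction on $m$. Suppose the formula holds for $m-1$, so that the product of the first $m-1$ factors equals $\bigl(\sum_{\ell=1}^{m-1}\Delta^{\ell-1}\gamma_\ell k_\ell + \sum_{\ell=1}^{m-1}\Delta^{\ell-1}(1-\delta_\ell)h_\ell,\ \Delta^{m-1}\bigr)$. Multiplying this on the right by the $m$-th factor $\bigl(\gamma_m k_m + (1-\delta_m)h_m,\ \delta_m\bigr)$ via \eqref{multDn}, the second coordinate becomes $\Delta^{m-1}\delta_m = \Delta^{m}$, and the first coordinate becomes
\begin{align*}
\sum_{\ell=1}^{m-1}\Delta^{\ell-1}\gamma_\ell k_\ell + \sum_{\ell=1}^{m-1}\Delta^{\ell-1}(1-\delta_\ell)h_\ell + \Delta^{m-1}\bigl(\gamma_m k_m + (1-\delta_m)h_m\bigr) = \sum_{\ell=1}^{m}\Delta^{\ell-1}\gamma_\ell k_\ell + \sum_{\ell=1}^{m}\Delta^{\ell-1}(1-\delta_\ell)h_\ell,
\end{align*}
which is exactly the right-hand side of \eqref{FormSE} for $m$. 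This completes the induction.
\qed\end{proof}
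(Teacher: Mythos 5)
Your proof is correct and takes essentially the same route as the paper: derive the single-factor conjugation formula $(h,\gamma)(k,\delta)(h,\gamma)^{-1}=\bigl((1-\delta)h+\gamma k,\ \delta\bigr)$ from the semidirect-product multiplication rule, then induct on $m$, multiplying the inductive result by the $m$-th conjugate. Your explicit pinning down of the convention $\Delta^{j}=\prod_{i=1}^{j}\delta_i$ (left implicit in the paper's statement) is a welcome clarification but does not change the argument.
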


\begin{proof}
Note that $(k,\delta)^{-1}=(-\delta k ,\delta)$. So we obtain 
\begin{align}\label{FormConj}
(h_1,\gamma_1)(k_1,\delta_1)(-\gamma_1 h_1 ,\gamma_1)
&=(h_1+\gamma_1 k_1-\gamma_1^2\delta_1 h_1,\gamma_1^2 \delta_1)\\\nonumber
&=((1- \delta_1)h_1+\gamma_1 k_1, \delta_1)
\end{align}

We show (\ref{FormSE}) by induction. For $m=1$ this is shown by  (\ref{FormConj}). By the induction hypothesis and (\ref{FormConj}) we obtain that
  \begin{align*}
 &\prod_{\ell=1}^{m}(h_\ell,\gamma_\ell)(k_\ell, \delta_\ell)(h_\ell,\gamma_\ell)^{-1}\\
 &\quad= \left(\sum_{\ell=1}^{m-1} \Delta^{\ell-1}\gamma_\ell k_\ell+\sum_{\ell=1}^{m-1}\Delta^{\ell-1} (1-\delta_\ell)h_\ell,\;\Delta^{m-1}\right) \cdot  ((1- \delta_{m})h_{m}+\gamma_{m} k_{m},\; \delta_{m})
 \\&\quad= \left(\sum_{\ell=1}^{m-1} \Delta^{\ell-1}\gamma_\ell k_\ell+\sum_{\ell=1}^{m-1}\Delta^{\ell-1} (1-\delta_\ell)h_\ell +\Delta^{m-1}\!\cdot ((1- \delta_{m})h_{m} +\gamma_{m} k_{m}),\; \Delta^{m}\right)
 \\&\quad= \left(\sum_{\ell=1}^{m} \Delta^{\ell-1}\gamma_\ell k_\ell+\sum_{\ell=1}^{m}\Delta^{\ell-1} (1-\delta_\ell)h_\ell,\; \Delta^{m})\right).
 \end{align*}
This shows the lemma. 
\qed\end{proof}

The following Lemma is an easy consequence of \cite[Theorem 6.8]{SuzukiGTI81}. 
\begin{lemma}\label{lem: ConjClDn}
The group $D_n$ has the following conjugacy classes: 
   \begin{itemize}
       \item if $n$ is odd: we have $\frac{n+1}{2}$ classes $\{1\}, \{ r^{\pm 1}\}, \ldots, \{r^{\pm\frac{n-1}{2}}\}$ of rotations and the class containing all the reflections $\{r^is\mid i \in \interval{0}{n-1} \}$, 
        \item if$n$ is even:  we have $\frac{n}{2}+1$ classes $\{1\}, \{r^{\frac{n}{2}}\}, \{ r^{\pm 1}\}, \ldots, \{r^{\pm \frac{n}{2}-1}\}$  of rotations. The reflections split into two classes $\{r^{2i}s \mid  i  \in \interval{0}{\frac{n}{2}-1} \}$ and $\{r^{2i+1}s \mid  i \in \interval{0}{\frac{n}{2}-1}\}$. 
   \end{itemize}
\end{lemma}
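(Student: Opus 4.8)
The plan is to read everything off the conjugation formula (\ref{FormConj}), which is the case $m=1$ of (\ref{FormSE}) and gives, for every $(h,\gamma)\in D_n$,
\begin{align*}
(h,\gamma)(k,1)(h,\gamma)^{-1}=(\gamma k,1),\qquad
(h,\gamma)(k,-1)(h,\gamma)^{-1}=(2h+\gamma k,-1).
\end{align*}
Here I write $r^k=(k,1)$ for the rotations and $r^ks=(k,-1)$ for the reflections. Since conjugation never changes the second coordinate $\delta$, a rotation is never conjugate to a reflection, so it suffices to analyse the two coordinates separately and then check that the classes obtained are disjoint and exhaust $D_n$.

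First I would handle the rotations. The first identity shows that the conjugacy class of $r^k$ is exactly $\{r^k,r^{-k}\}$, and $r^k=r^{-k}$ precisely when $2k\equiv 0\pmod n$. For $n$ odd this forces $k=0$, so besides $\{1\}$ we obtain the $\tfrac{n-1}{2}$ two-element classes $\{r^{\pm1}\},\dots,\{r^{\pm(n-1)/2}\}$, i.e.\ $\tfrac{n+1}{2}$ classes of rotations. For $n$ even it forces $k\in\{0,\tfrac n2\}$, so besides the two singletons $\{1\}$ and $\{r^{n/2}\}$ we obtain the $\tfrac n2-1$ two-element classes $\{r^{\pm1}\},\dots,\{r^{\pm(n/2-1)}\}$, i.e.\ $\tfrac n2+1$ classes of rotations.

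Next I would handle the reflections. By the second identity, the class of $r^ks$ is $\Set{(2h+\gamma k,-1)}{h\in\MZ_n,\ \gamma=\pm1}$; since $-k=k+2(-k)\in k+2\MZ_n$, the choice of $\gamma$ is in fact irrelevant and this set equals $\Set{(j,-1)}{j\in k+2\MZ_n}$. If $n$ is odd then $2\MZ_n=\MZ_n$, so all $n$ reflections form the single class $\{r^is\mid i\in\interval{0}{n-1}\}$. If $n$ is even then $2\MZ_n$ is the index-two subgroup of even residues, so $r^ks$ and $r^{k'}s$ are conjugate iff $k\equiv k'\pmod 2$, and the reflections split into the two classes $\{r^{2i}s\mid i\in\interval{0}{\tfrac n2-1}\}$ and $\{r^{2i+1}s\mid i\in\interval{0}{\tfrac n2-1}\}$. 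Assembling the rotation and reflection classes yields the statement, and as a consistency check the class sizes add up to $2n$ in both cases.

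I do not expect a genuine obstacle: once (\ref{FormConj}) is in hand the argument is essentially a one-line orbit computation in each of the two coordinates. The only spot that needs a little care is the even case, where one must correctly identify $2\MZ_n$ as the subgroup of even residues (of index $2$) and track the parity of the exponent of a reflection under conjugation; everything else is bookkeeping, and one may alternatively just invoke \cite[Theorem 6.8]{SuzukiGTI81} as indicated.
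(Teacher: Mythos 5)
Your proof is correct, and it is essentially a self-contained alternative to what the paper does: the paper offers no argument at all for this lemma, simply deferring to \cite[Theorem 6.8]{SuzukiGTI81}, whereas you derive the conjugacy classes directly from the conjugation formula (\ref{FormConj}) in the semi-direct product model $D_n \cong \Z_n \rtimes C_2$. Your orbit computation is clean: for rotations the class of $r^k$ is $\{r^{\pm k}\}$, which is a singleton exactly when $2k \equiv 0 \pmod n$; for reflections the class of $r^k s$ is $\{(j,-1) \mid j \in k + 2\Z_n\}$, and the behaviour then hinges on whether $2\Z_n$ is all of $\Z_n$ ($n$ odd) or the index-two subgroup of even residues ($n$ even). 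The one step that deserves a word of justification, and which you handle correctly, is absorbing the $\gamma = -1$ case into the coset $k + 2\Z_n$ via $-k = k + 2(-k)$. The class-size check $1 + 2\cdot\frac{n-1}{2} + n = 2n$ (odd $n$) and $2 + 2\cdot(\frac{n}{2}-1) + 2\cdot\frac{n}{2} = 2n$ (even $n$) is a nice sanity confirmation. What your approach buys is that the lemma becomes self-contained given formula (\ref{FormConj}), which the paper has already established; the paper's citation buys brevity at the cost of sending the reader to an external reference for a fact that, as you show, falls out of machinery already on the page.
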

\begin{lemma} \label{lem: SolDn}
Let $c_1, \ldots, c_k \in D_n$ with $c_i=(a_i,\delta_i)$ as in (\ref{multDn}). 
Then, $(c_1, \ldots, c_k) \in \DPSE{D_n}$ if and only if  $\prod_{i=1}^k \delta_i = 1$ and one of the following conditions hold: 
\begin{enumerate}[(a)]
\item \label{allRot}  $c_i \in \gen{r}$ for all $i$ and there are $\varepsilon_1, \dots, \varepsilon_k \in \{\pm 1\}$ such that $\sum_{i=1}^k \varepsilon_i a_i  = 0$ (modulo $n$). 
\item \label{CaseNOdd} For at least two $i$, $\delta_i = -1$ and, if  $n$ is even,
an even number of the $a_i$ is odd.  
\end{enumerate}
\end{lemma}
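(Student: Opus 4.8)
The strategy is to use formula~(\ref{FormSE}) to reduce solvability of the spherical equation to an arithmetic condition on the exponents of $r$, treating the $C_2$-component and the $\Z_n$-component separately. The $C_2$-component of $\prod_{i=1}^k z_i^{-1}c_iz_i$ equals $\prod_{i=1}^k\delta_i$ regardless of the choice of the $z_i$, since conjugation in $D_n$ does not change the $\delta$-part; hence $\prod_{i=1}^k\delta_i=1$ is necessary in all cases. It remains to analyze when the $\Z_n$-component can be made $0$. Writing $z_i=(h_i,\gamma_i)$ and setting $\Delta=\prod\delta_i=1$, formula~(\ref{FormSE}) shows that the $\Z_n$-component we can realize is $\sum_{i=1}^k\gamma_i a_i+\sum_{i=1}^k(1-\delta_i)h_i$ after reordering, where $\gamma_i\in\{\pm1\}$ are free and $h_i\in\Z_n$ are free. (Here I use \cref{lem:reorder} to bring the factors into a convenient order, noting that each cyclic rearrangement only conjugates the target, which is $1$ here.) Because conjugation is only by elements of $D_n$, I should be a little careful: instead of literally applying~(\ref{FormSE}) I will argue directly that $c_i^{z_i}$ ranges exactly over the conjugacy class of $c_i$ (\cref{lem: ConjClDn}), and that a spherical equation is solvable iff $1$ lies in the product of these conjugacy classes (as in the proof of \cref{thm: finitegroup}); then I compute that product set explicitly using~(\ref{multDn}).

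\textbf{Case (a): all $c_i\in\gen r$}, i.e. $\delta_i=1$ for all $i$. Then $\prod\delta_i=1$ automatically. The conjugacy class of $(a_i,1)$ is $\{(\pm a_i,1)\}$ by \cref{lem: ConjClDn}, and products of such elements stay in $\gen r$ with $\Z_n$-component $\sum_{i}\varepsilon_i a_i$ for arbitrary signs $\varepsilon_i\in\{\pm1\}$; this can be made $0$ modulo $n$ exactly under the stated condition. Conversely if some $\delta_j=-1$ we are not in this case, so (a) is the precise characterization when all $c_i$ are rotations. (If exactly one $\delta_i=-1$, then $\prod\delta_i=-1$, so the equation is unsolvable and neither (a) nor (b) holds — consistent with the statement.)

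\textbf{Case (b): at least two indices $i$ have $\delta_i=-1$.} Here the key observation is that a conjugate of a reflection $(a,-1)$ ranges over an entire reflection conjugacy class, which is $\{(b,-1)\mid b\equiv a\pmod 2\}$ when $n$ is even and $\{(b,-1)\mid b\in\Z_n\}$ when $n$ is odd (\cref{lem: ConjClDn}). When $n$ is odd, two reflection factors can already produce any element of $\gen r$ (their product has $\Z_n$-component $b_1-b_2$ with $b_1,b_2$ arbitrary), and the remaining rotation factors only add elements of $\gen r$; so the equation is always solvable once $\prod\delta_i=1$, matching the statement (no parity condition for odd $n$). When $n$ is even, each reflection class is one of the two cosets of $2\Z_n$, so the product of two reflections $(a_i,-1),(a_j,-1)$ from the solution contributes $b_i-b_j$ with $b_i\equiv a_i$, $b_j\equiv a_j\pmod2$; pairing up the reflections (there is an even number of them, since $\prod\delta_i=1$ forces evenly many $-1$'s) and adding the contributions $\pm a_\ell$ of the rotation factors, the achievable $\Z_n$-component is $0$ iff the total $\sum_{i}a_i$ over the odd-$\delta_i$ indices, plus the rotation contributions, can be made $\equiv 0\pmod 2$; the rotation contributions $\pm a_\ell$ have fixed parity $a_\ell\bmod2$ independent of sign, so the whole sum is adjustable to $0$ modulo $n$ precisely when an even number of the $a_i$ (over all $i$) is odd. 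One has to check that once the parity obstruction is met, one can actually hit $0$ in $\Z_n$ and not merely $0$ modulo $2$ — this follows because, with at least two reflections present, the differences $b_i-b_j$ range over all of $2\Z_n$, which together with the free signs on the rotations spans enough of $\Z_n$; this sub-argument is the one place requiring a small explicit computation.

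\textbf{Main obstacle.} The delicate point is case (b) with $n$ even: showing that the parity condition "an even number of the $a_i$ is odd" is not just necessary (which is immediate, since the $\Z_n\to\Z_2$ reduction of the achievable component is forced) but also \emph{sufficient} to reach $0\in\Z_n$, and simultaneously checking the edge cases (exactly one reflection, or the interaction with the rotation factors when $n$ is even). The cleanest way to handle sufficiency is to first use two of the reflection factors to realize an arbitrary element of $2\Z_n$, absorb all rotation contributions and the parities of the other reflections into a single target $t\in\Z_n$ whose residue mod $2$ is controlled by the hypothesis, and then note $t\in 2\Z_n$, so it is cancelled; everything else contributes to $\gen r$ and can be zeroed with the available freedom. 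I would write this out as a short explicit construction of the solution $z_i$, mirroring the "forward" direction in the proof of \cref{thm: DPforSn}.
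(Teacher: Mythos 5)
Your proposal follows essentially the same route as the paper: decouple the $C_2$-component (which forces $\prod_i\delta_i=1$), reorder via \cref{lem:reorder}, and then analyse the achievable $\Z_n$-component via conjugacy classes (equivalently via \eqref{FormSE}), splitting on the parity of $n$; in particular your treatment of $n$ odd (pair reflections to produce an arbitrary rotation) and the parity obstruction for $n$ even are the same argument the paper gives.

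One small inaccuracy to fix when you write it out: in case (b) with $n$ even, the product of two reflection conjugates $(b_i,-1)(b_j,-1)=(b_i-b_j,1)$ does \emph{not} range over all of $2\Z_n$; since $b_i\equiv a_i$ and $b_j\equiv a_j\pmod 2$, the attainable differences are exactly the coset $(a_i-a_j)+2\Z_n$, which is $1+2\Z_n$ when the two reflections have opposite parities. Your overall plan (absorb the remaining contributions into a target $t$ whose residue mod~$2$ is controlled by the hypothesis, then cancel) is still correct and is exactly what the paper's explicit computation with \eqref{FormSE} does, but the ``arbitrary element of $2\Z_n$'' step needs to be replaced by ``arbitrary element of the coset $(a_i-a_j)+2\Z_n$'' and the bookkeeping adjusted accordingly.
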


\begin{proof}
It is clear that  $\prod_{i=1}^k c_i=(0,1)$ is only possible if  $\prod_{i=1}^k \delta_i = 1$. From now on we assume that this is the case. 

If $c_i \in \gen{r}$, then $\delta_i=1$. Thus, (\ref{allRot}) follows from (\ref{FormSE}). So from now on, we can assume that $\delta_i=-1$ for at least two $i$ (otherwise we are in case (a) or do not have a solution). 

\medskip
 (b)  First consider the case that $n$ is odd. By \cref{lem:reorder} we can assume that $\delta_1=\cdots = \delta_{\ell-1}=1$ and $\delta_{\ell}=\cdots = \delta_k=-1$. Then,  $\prod_{j=1}^{\ell-1}c_j\in \gen{r}$. Moreover, each rotation is the product of two reflections. By \cref{lem: ConjClDn} there exist  $z_{\ell}, z_{\ell+1}\in D_n$ such that $z_{\ell}c_{\ell}z^{-1}_{\ell}z_{\ell+1}c_{\ell+1}z^{-1}_{\ell+1}=(\prod_{j=1}^{\ell-1} c_j)^{-1}$. Note that $k-(\ell+2)$ is even. We can choose $z_j\in D_n$ such that  $z_jc_jz^{-1}_j=c_{j+1}^{-1}$ for $j$ even and $j \in \interval{\ell+2}{k-1} $. If we set $z_j=1$ for the remaining $j$, we get $\prod_{j=1}^k z_jc_jz^{-1}_j=1$.

\medskip
Assume that $n$ is even.  
 By assumption, $\prod_{j=1}^k \delta_j=1$. Taking $(k_\ell,\delta_\ell)=c_\ell$ and $(h_\ell, \gamma_\ell)=z_\ell$, by (\ref{FormSE}) the first component in $\prod_{\ell=1}^k z_\ell c_\ell z^{-1}_\ell$ becomes 
\begin{align}\label{firstComp}
\sum_{\ell=1}^{k} \Delta^{\ell-1}\gamma_\ell a_\ell+\sum_{\delta_\ell=-1}\Delta^{\ell-1}\cdot 2h_\ell.
\end{align}
This sum is even if and only if an even number of the $a_\ell$ is odd. In that case we can choose the $h_i$ such that (\ref{firstComp}) is a multiple of $n$. This shows the lemma. 
\qed\end{proof}

\begin{theorem} \label{thm: DnNPhard}
$\DPSE{(D_n)_{n \in \N}}$, $n$ given in binary, is \NP-complete. 
\end{theorem}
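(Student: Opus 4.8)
The plan is to establish \NP-membership first and then \NP-hardness by a reduction from \partition. For membership in \NP, observe that an element $(k,\delta)$ of $D_n$ has bit-length $O(\log n)$ and the group multiplication, given by \eqref{multDn}, is computable in polynomial time; hence on input $n$ and $c_1,\dots,c_k$ one guesses a candidate solution $z_1,\dots,z_k\in D_n$ and verifies $\prod_{i=1}^k z_i^{-1}c_iz_i=(0,1)$ directly. (Alternatively, $D_n$ embeds into $\GL{2}{n}$ and \cref{prop:inNP} applies.)

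For \NP-hardness I would reduce from \partition. Given an instance $S=(a_1,\dots,a_k)$ of \partition with the $a_i$ in binary, put $n:=1+\sum_{i=1}^{k}a_i$, a binary number of size linear in the input. It is precisely here that the binary encoding of $n$ is needed, since $\sum_i a_i$ may be exponential in the bit-length of $S$ (for $n$ given in unary the group $D_n$ has polynomial order, and $\DPSE{(D_n)_{n\in\N}}$ would then be solvable in polynomial time by \cref{prop: finiteGroups}). The reduction outputs the spherical equation over $D_n$ with constants $c_i:=(a_i,1)=r^{a_i}$ for $i\in\interval{1}{k}$.

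Since all $\delta_i=1$ we have $\prod_i\delta_i=1$ and every $c_i$ lies in $\gen{r}$, so by \cref{lem: SolDn} (\ref{allRot}) this equation has a solution in $D_n$ if and only if there exist signs $\varepsilon_1,\dots,\varepsilon_k\in\{\pm1\}$ with $\sum_{i=1}^{k}\varepsilon_i a_i\equiv 0\pmod n$. The final observation is that, because $\bigl|\sum_{i=1}^{k}\varepsilon_i a_i\bigr|\le\sum_{i=1}^{k}a_i<n$, this congruence is equivalent to the integer equation $\sum_{i=1}^{k}\varepsilon_i a_i=0$, i.e.\ to $\sum_{i\in A}a_i=\sum_{i\notin A}a_i$ where $A:=\{\,i:\varepsilon_i=1\,\}$; hence the equation is solvable if and only if $S$ is a positive instance of \partition. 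The reduction is computable in logarithmic space: computing $n$ is an iterated addition of binary numbers (which lies even in \TC), and the remaining output is essentially a copy of the input.

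I do not expect a genuinely hard step, since the structural analysis is already carried out in \cref{lem: SolDn}. The two points needing care are choosing $n$ strictly larger than $\sum_i a_i$ so that the modular condition of \cref{lem: SolDn} (\ref{allRot}) becomes an honest equation over $\Z$ equivalent to \partition, and checking that $n$ remains only polynomially large so that the map is indeed a \LOGSPACE (in particular polynomial-time) reduction~-- which is exactly what the binary input convention provides.
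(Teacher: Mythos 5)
Your proposal is correct and follows essentially the same approach as the paper: the same reduction from \partition with $n=1+\sum_i a_i$ and constants $c_i=(a_i,1)$, and the same key observation that $n>\sum_i a_i$ upgrades the modular condition to an integer equation. The only cosmetic difference is that you invoke \cref{lem: SolDn}(\ref{allRot}) to handle both directions of the equivalence, whereas the paper's proof re-derives the relevant condition directly from formula \eqref{FormSE}; your version is slightly more modular but mathematically identical.
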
 
Note that if we give $n$ in unary notation, then $\DPSE{(D_n)_{n\in\N}}$ can be solved in \P. This follows from \cref{prop:CayNL} because $\abs{D_n}$ is $2n$ and, hence, polynomial.

\begin{proof}
As already mentioned above, we can embed the group $D_n$ into the group $\GL{2}{n}$.  Thus, by \cref{prop:inNP},  $\DPSE{(D_n)_{n\in \N}}$ is in \NP. 
\medskip

To show \NP-hardness, we show that $(a_1,\ldots,a_{k})\in \N^k$ for some $k\in \N$ is a positive instance
of \partition (see \cref{sec:NP}) if and only if $((a_1,1), \ldots, (a_k,1))\in \DPSE{D_n}$ with $n=1+\sum_{i=1}^k a_i$.

First assume that there exists $A \subseteq \interval{1}{k}$ such that $\sum_{i \in A}a_i=\sum_{i  \notin A}a_i$. We choose $\varepsilon_i=1$ for $i \in A$ and $\varepsilon_i=-1$ for $i \notin A$ and set $z_i=(0, \varepsilon_i)$. Then
\begin{align*}
 \prod_{i  = 1}^k z_i(a_i,1)z^{-1}_i=\bigg(\sum_{i  = 1}^k \varepsilon_i a_i,1\bigg)=(0,1)
\end{align*}
and $(0,1)$ is the neutral element in $D_n$. So these $z_i$ are a solution. 
\smallskip

Now assume that $((a_1,1), \ldots, (a_k,1))\in \DPSE{D_n}$. By (\ref{FormSE}) there exist $\gamma_i \in \Z_n$ such that 
$\sum_{i=1}^{k} \gamma_i a_i=r\cdot n$ for some $r \in \Z$.
Let $A=\{i~|~ \gamma_i=1\}$.  Then 
\begin{align*}
\sum_{i \in A}a_i-\sum_{i \notin A}a_i= r\cdot n=r \cdot \Big(1+ \sum_{1\leq i \leq k} a_i\Big).
\end{align*} 
Thus, $r=0$ and $\sum_{i \in A}a_i=\sum_{i \in  \notin A}a_i$. Hence, $(a_1, \ldots, a_k)$ is a positive instance of \partition. 
\qed\end{proof}
We even proved that the following restricted problem is \NP-complete: 
\ynproblem{$n, a_1, \ldots, a_k \in \mathbb{N}$ given in binary.}{Does the equation $\prod_{i=1}^k (a_i,1) $ have a solution in the group $D_n$? }

The groups $D_{2^n}$ are $p$-groups and thus nilpotent.  We can prove the following:

\begin{corollary}
    $\DPSE{(D_{2^n})_{n\in \N}}$  is \NP-complete. In particular, there exists a sequence of nilpotent groups $\cG$ such that $\DPSE{\cG}$ is \NP-hard.
\end{corollary}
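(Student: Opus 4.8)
The plan is to note that the ``in particular'' part is immediate from the hardness claim, since $D_{2^n}$ has order $2^{n+1}$ and is therefore a $2$-group, hence nilpotent (as already observed in the excerpt). So it suffices to show that $\DPSE{(D_{2^n})_{n\in\N}}$ is \NP-complete. Membership in \NP\ is free: $D_{2^n}$ embeds into $\GL 2 {2^n}$ by matrices of polynomial size, so \cref{prop:inNP} applies exactly as in \cref{thm: DnNPhard}. For \NP-hardness I would rerun the reduction from \partition used in \cref{thm: DnNPhard}, changing only one thing: instead of taking the modulus of the dihedral group to be $1+\sum_i a_i$, I take it to be a power of two that is strictly larger than $\sum_i a_i$.

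In detail: given a \partition instance $(a_1,\dots,a_k)$ put $T=\sum_{i=1}^k a_i$ and choose $n\in\N$ with $2^n>T$~-- one may simply let $n$ be the total bit-length of the input, which is \LOGSPACE-computable and keeps $n$ polynomial (in fact logarithmic) in the input size. Set $m=2^n$ and $c_i=(a_i,1)\in D_m$ for $i\in\interval 1 k$. Since every $c_i$ lies in $\gen r$, all sign components equal $1$, so $\prod_i\delta_i=1$ and the only applicable case of \cref{lem: SolDn} is (\ref{allRot}) (case (\ref{CaseNOdd}) requires $\delta_i=-1$ for at least two $i$). Hence $(c_1,\dots,c_k)\in\DPSE{D_m}$ if and only if there are $\varepsilon_1,\dots,\varepsilon_k\in\{\pm1\}$ with $\sum_{i=1}^k\varepsilon_i a_i\equiv 0\pmod m$.

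The point of making $m$ a large power of two is to kill the modular wraparound: because $\abs{\sum_i\varepsilon_i a_i}\le T<m$, the congruence $\sum_i\varepsilon_i a_i\equiv 0\pmod m$ is equivalent to the plain equality $\sum_i\varepsilon_i a_i=0$, and this holds for some sign pattern if and only if, setting $A=\Set{i}{\varepsilon_i=1}$, we have $\sum_{i\in A}a_i=\sum_{i\notin A}a_i$, i.e.\ if and only if $(a_1,\dots,a_k)$ is a positive \partition instance. Chaining these equivalences gives a \LOGSPACE\ reduction, proving \NP-hardness; together with membership in \NP\ and the nilpotence of each $D_{2^n}$ this establishes the corollary. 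I do not expect a genuine obstacle here~-- the only thing to be a little careful about is that powers of two are multiplicatively dense enough that a suitable modulus $2^n>T$ of polynomial size always exists, so the reduction stays logspace-computable; everything else is a verbatim reuse of \cref{thm: DnNPhard} and the classification in \cref{lem: SolDn}.
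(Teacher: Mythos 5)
Your argument is correct and matches the paper's proof: both pick a power-of-two modulus strictly larger than $\sum_i a_i$ so that the modular condition $\sum_i\varepsilon_ia_i\equiv 0$ becomes an exact equality, and both conclude NP-hardness by the \partition reduction of \cref{thm: DnNPhard} (you rephrase the second half via \cref{lem: SolDn}(a), which is the same content). The only cosmetic difference is the choice of exponent (the paper uses the bit-length of $\sum a_i$, you use the total input length), both of which are LOGSPACE-computable and polynomial in size, so the reduction is valid either way.
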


\begin{proof} 
Let $\ell$ be the number of bits we need to represent  $\sum_{i=1}^k a_i$. 
We then choose $n=2^\ell$ in the proof of \cref{thm: DnNPhard}.
\qed\end{proof}

\begin{remark}\label{rem: DpNPhard}
Let $p_n$ be the $n$-th prime number. By Cramers conjecture \cite{Cramer36}, $p_{n+1}-p_n \in \Oh((\log p_n)^2)$. Assume that this conjecture is true and let $m=\sum a_i +1$. Then we can find $p \in \prm$,  $p \in \interval{m}{m+(\log(m))^2}$ in polynomial time \cite{Agrawal04}. Choosing $n=p$ in the proof of \cref{thm: DnNPhard} would yield that $\DPSE{(D_p)_{p \in \prm}}$ is \NP-hard.
\end{remark}

\begin{corollary}
The generalized Quaternion group of order $4m$ is defined as
\begin{align*}
Q_{m}=\Gen{x,y}{x^{2m}=1, y^4=1, x^m=y^2, yxy^{-1}=x^{-1}}.
\end{align*}
Then, $\DPSE{(Q_m)_{m \in \N}}$ is \NP-complete. 
\end{corollary}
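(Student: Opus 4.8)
The plan is to reduce the Diophantine problem for spherical equations in $(Q_m)_{m \in \N}$ to the one for $(D_n)_{n\in\N}$ (or directly redo the \partition-reduction) by exploiting the structural similarity between $Q_m$ and $D_{2m}$. Recall that $Q_m = \gpr{x,y}{x^{2m}=1,\ y^4=1,\ x^m=y^2,\ yxy^{-1}=x^{-1}}$ has the cyclic subgroup $\gen x$ of order $2m$ as an index-$2$ normal subgroup, and every element is of the form $x^i$ or $x^i y$ with $y(x^i y)y^{-1}$-type relations analogous to the dihedral case. The key point is that, just as in \cref{lem: SolDn} and \cref{thm: DnNPhard}, conjugation of a ``rotation'' $x^i$ by a ``reflection'' $x^j y$ sends $x^i$ to $x^{-i}$, while conjugation by a rotation leaves the exponent unchanged; so for an equation whose constants all lie in $\gen x$ the solvability is governed exactly by whether some choice of signs $\varepsilon_i \in \{\pm1\}$ makes $\sum_i \varepsilon_i a_i \equiv 0 \pmod{2m}$.

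\textbf{Step 1.} Establish membership in \NP: $Q_m$ embeds into a matrix group of polynomial dimension over $\Z_{q}$ for suitable $q$ (e.g. into $\GL{2}{\Z[i]/q}$ or a $4$-dimensional real-matrix model reduced mod a prime), or one may simply observe that elements of $Q_m$ have a normal form $x^i y^\epsilon$ with $i < 2m$, $\epsilon \in \{0,1\}$, of size polynomial in $\log m$, with polynomial-time multiplication; then apply (the proof idea of) \cref{prop:inNP}.

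\textbf{Step 2.} For \NP-hardness, given a \partition instance $(a_1,\dots,a_k)$, set $N = 1 + \sum_{i=1}^k a_i$ and choose $m$ so that $2m = N$ if $N$ is even, or $2m = 2N$ (rescaling all $a_i$ by $2$, which preserves being a \partition instance) to make the modulus work out; then consider the spherical equation with constants $c_i = x^{a_i} \in \gen x \leq Q_m$. Using the conjugation behaviour above together with the fact that a product $\prod_i z_i^{-1} x^{a_i} z_i$ lands in $\gen x$ (the $y$-parts cancel since each constant is a rotation), one shows exactly as in \cref{thm: DnNPhard} that the equation is solvable iff there exist signs with $\sum \varepsilon_i a_i \equiv 0 \pmod{2m}$, and the choice of modulus forces this to be the genuine \partition condition $\sum_{i \in A} a_i = \sum_{i \notin A} a_i$.

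\textbf{The main obstacle} I anticipate is bookkeeping the analogue of \cref{lem: SolDn}/\cref{lem: ConjClDn} for $Q_m$ — i.e.\ writing down precisely how conjugation acts and what the conjugacy classes of rotations are — since $Q_m$ is \emph{not} a semidirect product ($y^2 = x^m \neq 1$), so the clean $\Z_n \rtimes C_2$ formula (\ref{FormSE}) is not directly available and one must instead argue with the coset structure $\gen x \cup \gen x y$ and the relation $y x^i y^{-1} = x^{-i}$ by hand. However, because the reduction only uses constants in the abelian subgroup $\gen x$, one never needs the full multiplication table of the ``reflection'' coset: it suffices to know that each conjugator $z_i$ acts on $x^{a_i}$ either trivially or by inversion, which follows immediately from $z_i \in \gen x$ or $z_i \in \gen x y$ respectively. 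So the argument goes through with essentially the same calculation as for $D_n$, and one can either cite \cref{thm: DnNPhard}'s proof with the obvious substitutions or give the one-paragraph direct proof. \qed
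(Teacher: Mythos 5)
Your proposal is correct and takes essentially the same route as the paper, which simply says ``Reduce from \partition{} as for $D_n$ and use \cref{prop:inNP}.'' You correctly identify the two facts that make the $D_n$ reduction port over: $\gen{x}$ is a normal cyclic subgroup of order $2m$, and conjugation of $x^i$ by an element in the coset $\gen{x}y$ yields $x^{-i}$ (while conjugation by elements of $\gen{x}$ fixes $x^i$), so for constants in $\gen{x}$ solvability reduces to the signed-sum condition exactly as in case~(\ref{allRot}) of \cref{lem: SolDn}; and you correctly observe that although $Q_m$ is not a semidirect product, none of that is needed because the constants live in $\gen{x}$. Two minor remarks: (1) the parity case-split in Step~2 is unnecessary~-- simply taking $m = 1 + \sum_i a_i$ (so $|\gen{x}| = 2m > \sum_i a_i$) with constants $c_i = x^{a_i}$ already forces $\sum_i \varepsilon_i a_i \equiv 0 \pmod{2m}$ to mean $\sum_i \varepsilon_i a_i = 0$; (2) for \NP-membership your normal-form encoding $(i,\epsilon)$ with $i < 2m$, $\epsilon \in \{0,1\}$ and the explicit polynomial-time multiplication is the cleaner of your two options, since a constant-dimension matrix embedding of $Q_m$ over a polynomial-size modulus requires a $2m$-th root of unity in the base ring, which is not always readily available (the paper's blanket appeal to \cref{prop:inNP} glosses over this).
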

\begin{proof}
Reduce from \partition as for $D_n$  and use \cref{prop:inNP}.
\qed\end{proof}

\section{Spherical equations for two-by-two matrices}
\label{se:GL2p}

In this section we will consider matrix groups. We first will state some  \NP-hardness results in the \emph{matrix group} input model. Then we will show that $\DPSE{(\TL{2}{p})_{p \in \prm}} \in \P$. After that, as the main result of this section, we  show $\DPSE{(\GL{2}{p})_{p \in \prm}} \in \P$. 
According to \cref{sec: grpZkC} this does not necessarily imply the result for $\TL{2}{p}$.

\subsection{Triangular matrices}

\begin{corollary}\label{lem: DninTwo}
$\DPSE{(\ET{2}{n})_{n \in \N}}$ is \NP-complete.
\end{corollary}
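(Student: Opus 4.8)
The idea is to embed the dihedral groups $D_n$ into the groups $\ET{2}{n}$ (for suitable $n$) so that \cref{thm: DnNPhard} transfers directly. Recall from \cref{sec: Dihrgrps} that $D_n\cong\gen{R,S}$ inside $\GL{2}{n}$ with $R=\begin{pmatrix}1&1\\0&1\end{pmatrix}$ and $S=\begin{pmatrix}1&0\\0&-1\end{pmatrix}$. Both $R$ and $S$ are upper triangular and both have diagonal entries in $\{\pm1\}$, so in fact $D_n\cong\gen{R,S}\leq\ET{2}{n}$. Thus the reduction from \partition built in the proof of \cref{thm: DnNPhard} already produces matrices lying in $\ET{2}{n}$: the constants $(a_i,1)$ correspond to the matrices $R^{a_i}=\begin{pmatrix}1&a_i\\0&1\end{pmatrix}$, which are in $\ET{2}{n}$, and a spherical equation with these constants has a solution in $D_n$ if and only if it has one in $\GL{2}{n}$ restricted to the subgroup, i.e. the question ``$(R^{a_1},\dots,R^{a_k})\in\DPSE{D_n}$?'' is literally an instance of the Diophantine problem we are reducing to.

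More precisely, first I would note that $\ET{2}{n}$ is a finite group for every $n$ (its elements are $\begin{pmatrix}\varepsilon&b\\0&\varepsilon'\end{pmatrix}$ with $\varepsilon,\varepsilon'\in\{\pm1\}$ invertible mod $n$ and $b\in\Z_n$, so $|\ET{2}{n}|=4n$ when $2$ is invertible, and in any case polynomial in $n$). Hence $\DPSE{(\ET{2}{n})_{n\in\N}}$ lies in \NP by \cref{prop:inNP} (or even by \cref{prop: finiteGroups} combined with \cref{rem:} once one is careful about the binary encoding of $n$; membership in \NP is all that is needed here). For \NP-hardness, I would take the reduction from \partition in the proof of \cref{thm: DnNPhard}: given $(a_1,\dots,a_k)$, set $n=1+\sum_{i=1}^k a_i$ and output the spherical equation over $\ET{2}{n}$ with constants $c_i=\begin{pmatrix}1&a_i\\0&1\end{pmatrix}$ for $i\in\interval{1}{k}$. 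Since $\gen{c_1,\dots,c_k}\subseteq\gen{R}\cong\Z_n$ and more importantly all $c_i$ and all potential conjugators live in the subgroup $\gen{R,S}\cong D_n$ of $\ET{2}{n}$, a solution in $\ET{2}{n}$ restricts (via \cref{lem: SolDn}, which characterizes solvability purely in terms of $D_n$-data and is insensitive to the ambient group) to exactly the condition $\sum_i\varepsilon_i a_i\equiv 0\pmod n$ for some signs $\varepsilon_i$ — and as computed in \cref{thm: DnNPhard} this forces $r=0$, i.e. a balanced partition. Conversely a balanced partition yields the solution $z_i=S^{(1-\varepsilon_i)/2}$ exactly as before. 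The reduction is clearly computable in \LOGSPACE.

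The main subtlety — really the only thing to check — is that passing from $D_n$ to the larger group $\ET{2}{n}$ does not create new solutions: a priori a conjugator $z_i\in\ET{2}{n}$ need not lie in the $D_n$-subgroup. I would handle this by arguing directly with the matrix form: conjugating $\begin{pmatrix}1&a_i\\0&1\end{pmatrix}$ by an arbitrary $\begin{pmatrix}\varepsilon&b\\0&\varepsilon'\end{pmatrix}\in\ET{2}{n}$ gives $\begin{pmatrix}1&\varepsilon\varepsilon' a_i\\0&1\end{pmatrix}$, so the product $\prod_i z_i^{-1}c_i z_i$ equals $\begin{pmatrix}1&\sum_i\eta_i a_i\\0&1\end{pmatrix}$ with each $\eta_i=\varepsilon_i\varepsilon_i'\in\{\pm1\}$; this is $I$ iff $\sum_i\eta_i a_i\equiv0\pmod n$, which is precisely the $D_n$-condition from \cref{lem: SolDn}(\ref{allRot}). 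Hence the set of solvable instances is unchanged, and \NP-completeness of $\DPSE{(\ET{2}{n})_{n\in\N}}$ follows. (If one prefers a cleaner statement, one can instead invoke the \emph{subgroup} input model and the remark following \cref{thm: DnNPhard}, but the self-contained matrix computation above avoids any promise-problem technicalities.)
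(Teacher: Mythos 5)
Your proposal is correct and takes essentially the same approach as the paper: both proofs embed $D_n\cong\gen{R,S}$ into $\ET{2}{n}$, observe that conjugation by an arbitrary element of $\ET{2}{n}$ produces no new conjugates of elements of $\gen{R,S}$ beyond those already available in $D_n$, and then invoke \cref{thm: DnNPhard} for hardness and \cref{prop:inNP} for membership. Your explicit matrix computation simply spells out the paper's terse remark that ``matrix multiplication shows that if $A\in\gen{R,S}$ and $B\in\ET{2}{n}$, then $A$ and $B^{-1}AB$ are already conjugate in $\gen{R,S}$,'' and reducing directly from \partition rather than from $\DPSE{(D_n)_{n\in\N}}$ is only a cosmetic difference.
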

\begin{proof}
We reduce from $\DPSE{(D_n)_{n \in \N}}$. Recall that $D_n\cong \gen{R,S}$ with $R,S \in \ET{2}{n}$ as in \cref{sec: Dihrgrps}.
Matrix multiplication shows that if $A \in \gen{R,S}$ and $B \in  \ET{2}{n}$, $A$ and $B^{-1}AB$ are already conjugated  in $\gen{R,S}$. 
The lemma follows by \cref{thm: DnNPhard} and \cref{prop:inNP}. 
\qed\end{proof}

On the first thought one might think that making the groups of interest bigger, increases the complexity. However, the converse is rather the case as the following result shows. Be aware that in \cref{thm:TinP} we require that $p\in \prm$. We conjecture that this is not necessary and also $\DPSE{(\TL{2}{n})_{n \in \N }} \in \P$.

\begin{theorem}\label{thm:TinP}
$\DPSE{(\TL{2}{p})_{p  \in \prm}}$ is in \P.
\end{theorem}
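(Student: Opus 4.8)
The plan is to analyze the structure of $\TL{2}{p}$ as a semidirect product and reduce the solvability of a spherical equation to a tractable number-theoretic/linear-algebraic condition. Recall that every matrix in $\TL{2}{p}$ has the form $\begin{pmatrix} a & b \\ 0 & d \end{pmatrix}$ with $a,d \in \MZ_p^\times$ and $b \in \MZ_p$. The map sending such a matrix to $(a,d) \in \MZ_p^\times \times \MZ_p^\times$ is a surjective homomorphism with abelian kernel $\UT{2}{p} \cong \MZ_p$. So $\TL{2}{p} \cong \MZ_p \rtimes (\MZ_p^\times \times \MZ_p^\times)$, where $(a,d)$ acts on the additive group $\MZ_p$ by multiplication by $a d^{-1}$ (this is exactly what conjugation does to the upper-right entry). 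Writing elements as pairs $(b, (a,d))$, first I would record the necessary condition coming from the abelianization: if $(c_1, \dots, c_k) \in \DPSE{\TL{2}{p}}$ with $c_i = (b_i, (a_i, d_i))$, then conjugation does not change the diagonal part, so we need $\prod_i (a_i, d_i) = (1,1)$, i.e. $\prod_i a_i \equiv 1$ and $\prod_i d_i \equiv 1 \pmod p$.

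**Reducing to a linear condition.** Assuming $\prod_i a_i = \prod_i d_i = 1$, the remaining task is to decide whether the upper-right entries can be made to cancel. By \cref{lem:reorder} we may reorder the $c_i$ freely. Conjugating $c_i$ by a variable $z_i = (t_i, (u_i, v_i))$ changes $c_i$ to a matrix with the same diagonal $(a_i,d_i)$ but with upper-right entry of the form $\lambda_i b_i + (1 - a_i d_i^{-1}) \mu_i t_i$ for suitable units $\lambda_i, \mu_i \in \MZ_p^\times$ depending on $u_i, v_i$ — an affine function of the new conjugator data. I would compute, via \cref{FormSE}-style bookkeeping or direct matrix multiplication, the upper-right entry of $\prod_{i=1}^k z_i^{-1} c_i z_i$ as a function of the $z_i$; because the diagonal parts multiply to the identity, this entry is an $\MZ_p$-linear combination $\sum_i \gamma_i$ where each $\gamma_i$ ranges over a set determined by $c_i$. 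Specifically: if $a_i d_i^{-1} \neq 1$ (the "non-central-diagonal" case) then $\gamma_i$ can be made to be \emph{any} element of $\MZ_p$ by choosing the conjugator (since $1 - a_i d_i^{-1}$ is a unit and $t_i$ is free); if $a_i d_i^{-1} = 1$ then $\gamma_i$ is forced to equal $\lambda_i b_i$ for some unit $\lambda_i$, but varying $u_i, v_i$ lets $\lambda_i$ range over all of $\MZ_p^\times$, so $\gamma_i$ ranges over $b_i \MZ_p^\times$, which is either $\{0\}$ (if $b_i = 0$) or all of $\MZ_p \setminus \{0\}$ (if $b_i \neq 0$). The equation is solvable iff $0$ lies in the sumset of these ranges. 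Since each range is either all of $\MZ_p$, or $\MZ_p \setminus \{0\}$, or $\{0\}$, this sumset condition is decidable by inspection: it holds unless essentially all $c_i$ have both $a_i d_i^{-1} = 1$ and $b_i \neq 0$ and there are too few of them (one nonzero singleton-ish summand cannot reach $0$; two or more copies of $\MZ_p\setminus\{0\}$ already cover everything for $p \geq 3$, with small cases checked directly).

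**Assembling the algorithm.** The algorithm then is: given $c_1, \dots, c_k \in \TL{2}{p}$ in binary, compute the diagonal products $\prod a_i$, $\prod d_i$ mod $p$; reject if either is $\neq 1$; otherwise classify each $c_i$ into one of the three types above (which needs only a unit-check on $a_i d_i^{-1} - 1$ and on $b_i$) and decide the sumset-covers-$0$ condition by the explicit case analysis. All of this is clearly polynomial time (indeed low complexity), giving $\DPSE{(\TL{2}{p})_{p \in \prm}} \in \P$.

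**Main obstacle.** The hard part will be pinning down exactly which affine forms $\gamma_i$ are realizable as the conjugator $z_i$ ranges over all of $\TL{2}{p}$ — i.e. correctly computing the upper-right entry of $\prod z_i^{-1} c_i z_i$ and verifying the claim that the "forced" summands are precisely $b_i \MZ_p^\times$ and the "free" summands are all of $\MZ_p$, together with the edge cases when $p$ is small or when all $c_i$ are of the forced type. Primality of $p$ is used to guarantee that $\MZ_p$ is a field, so that $1 - a_i d_i^{-1}$ being nonzero makes it invertible and the orbit computations behave cleanly; this is why the theorem is stated for $p \in \prm$, and it is the reason the general modulus $n$ is only conjectured.
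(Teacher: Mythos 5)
Your proposal is correct and takes essentially the same approach as the paper: compute the upper-right entry of $\prod_i z_i^{-1}c_iz_i$ as a sum of independently controllable terms, classify each term's range ($\MZ_p$, $\MZ_p\setminus\{0\}$, or $\{0\}$) according to whether the diagonal of $c_i$ is scalar and whether $b_i=0$, and decide when $0$ lies in the sumset; this reproduces exactly the three conditions of \cref{lem: DiagTwo}. Your semidirect-product bookkeeping is a repackaging of the paper's direct matrix calculation (the cross-terms from the other constants' diagonals only rescale each summand's range by a unit, which does not change the three set types), and the case analysis, including the small-$p$ edge cases you flag, is the same.
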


\newcommand{\prA}[1]{\mathcal{A}_{#1}}
\newcommand{\prC}[2]{\mathcal{C}_{#1,#2}}

\cref{thm:TinP} follows from the observation that the conditions in the following lemma can be checked in polynomial time:

\begin{lemma}\label{lem: DiagTwo}
Let $p \in \prm$ and $C_i=\small\begin{pmatrix}
a_i & b_i\\
0 & c_i
\end{pmatrix}\normalsize \in \TL{2}{p}$ for $i \in \interval{1}{k}$. Then $(C_1, \ldots, C_k)\in \DPSE{\TL{2}{p}}$ if and only if $\prod_{i=1}^{k} a_i=\prod_{i=1}^{k} c_i=1$ and one of the following conditions holds:
\begin{itemize}
\item $a_i\neq c_i$ for some $i$,
\item $ b_i  = 0$ for all $i$, or
\item there are $i\neq \ell$ with $b_i \neq 0  \neq b_\ell $.
\end{itemize}

\end{lemma}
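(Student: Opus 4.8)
The plan is to analyze the structure of products of conjugates in $\TL{2}{p}$ by splitting a matrix $C = \small\begin{pmatrix} a & b \\ 0 & c\end{pmatrix}\normalsize$ according to the diagonal part $(a,c) \in (\Z_p^\times)^2$ and the off-diagonal behavior. First I would record the necessary condition: since the diagonal map $\TL{2}{p} \to \Z_p^\times \times \Z_p^\times$ is a homomorphism onto an abelian group, any product of conjugates $\prod z_i^{-1} C_i z_i$ has the same diagonal as $\prod C_i$, so $\prod a_i = \prod c_i = 1$ is forced; assume this from now on. Conjugating $C_i$ by $z_i = \small\begin{pmatrix} s_i & t_i \\ 0 & u_i\end{pmatrix}\normalsize$ gives again an upper triangular matrix with the same diagonal $(a_i,c_i)$ and a new top-right entry; a direct computation shows the top-right entry of $z_i^{-1} C_i z_i$ is $\tfrac{s_i}{u_i} b_i + \tfrac{t_i}{u_i}(a_i - c_i)$ (up to the exact bookkeeping of the conjugation convention, which I would fix once at the start). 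The key point is: if $a_i \neq c_i$ then by choosing $t_i$ freely the top-right entry of the $i$-th conjugate can be made to equal \emph{any} element of $\Z_p$, while if $a_i = c_i$ the top-right entry is exactly $\tfrac{s_i}{u_i} b_i$, i.e. an arbitrary element of $b_i \cdot \Z_p^\times \cup \{0\}$, which is all of $\Z_p$ when $b_i \neq 0$ and is $\{0\}$ when $b_i = 0$ (this uses $p$ prime, so $\Z_p$ is a field).

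Next I would show sufficiency of each of the three listed conditions. Write $D_i$ for the conjugate of $C_i$ we are going to use and track the product $\prod_{i=1}^k D_i$; its diagonal is $(1,1)$ by the assumed condition, so it suffices to control its single top-right entry $e$. Using the multiplication formula (an analog of the dihedral computation, or just the closed form for a product of upper-triangular matrices with these diagonals), $e$ is an affine-linear expression in the chosen top-right entries $e_1,\dots,e_k$ of the $D_i$ with \emph{nonzero} coefficients (each coefficient being a product of some $a_j$'s and $c_j$'s, hence a unit). In case $a_i \neq c_i$ for some $i$: fix all other conjugates arbitrarily and use the free parameter $t_i$ to set $e_i$ so that $e = 0$; done. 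In case all $b_i = 0$: take all $z_i = I$, so every $D_i = C_i$ has top-right entry $0$, and then $\prod C_i$ has diagonal $(1,1)$ and — since a product of upper unitriangular-up-to-diagonal matrices all with zero off-diagonal part and equal diagonal entries within each matrix is diagonal — top-right entry $0$; hence $\prod C_i = I$. (Here I must be slightly careful: $b_i = 0$ and $a_i \neq c_i$ is allowed, but that falls under the first case anyway, so I may assume additionally $a_i = c_i$ for all $i$, making each $C_i$ scalar, and then the product is scalar $= I$.) In case there are $i \neq \ell$ with $b_i, b_\ell$ both nonzero but we are \emph{not} in the first case (so $a_j = c_j$ for all $j$): by the paragraph above, $e_i$ ranges over all of $\Z_p$ and $e_\ell$ over all of $\Z_p$; fixing all other $z_j = I$ and using the affine-linear formula with nonzero coefficients, choose $e_i$ (say) to make $e = 0$. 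So in every listed case a solution exists.

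Finally I would prove necessity, i.e. that if none of the three conditions holds then $(C_1,\dots,C_k) \notin \DPSE{\TL{2}{p}}$. "None holds" means: $a_i = c_i$ for all $i$ (negation of the first), not all $b_i$ are zero (negation of the second), and there do not exist two indices with nonzero $b$ (negation of the third) — so exactly one index, say $i_0$, has $b_{i_0} \neq 0$ and all others have $b_j = 0$. Since $a_j = c_j$ for all $j$, every $C_j$ with $j \neq i_0$ is the scalar matrix $a_j I$, which is central, hence conjugation does nothing: $z_j^{-1} C_j z_j = a_j I$. And $z_{i_0}^{-1} C_{i_0} z_{i_0}$ has diagonal $(a_{i_0}, a_{i_0})$ and, by the formula above with $a_{i_0} = c_{i_0}$, top-right entry $\tfrac{s_{i_0}}{u_{i_0}} b_{i_0} \neq 0$ (a nonzero unit times a nonzero element in a field is nonzero). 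So the product $\prod_i z_i^{-1} C_i z_i$ equals $\big(\prod_{j\neq i_0} a_j\big) \cdot z_{i_0}^{-1} C_{i_0} z_{i_0}$, a scalar times a matrix with nonzero top-right entry, hence has nonzero top-right entry and cannot equal $I$. This completes necessity.

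\textbf{Main obstacle.} The routine part is the conjugation/product bookkeeping; the one genuine subtlety to get right is the primality hypothesis. It is used precisely to guarantee that $\Z_p$ is a field, so that (i) $\tfrac{s}{u} b$ ranges over all of $\Z_p$ as $s$ ranges over $\Z_p^\times$ whenever $b \neq 0$, and (ii) the "nonzero coefficient" argument in the affine-linear formula for the final top-right entry really does let us hit $0$. For general $\Z_n$ these fail (one can only reach the ideal $b\Z_n$), which is exactly why the authors conjecture but do not prove the composite case — so the write-up should flag clearly at each of the three sufficiency arguments and the necessity argument where fieldness enters, and the deduction that \cref{thm:TinP} follows is then immediate since the displayed conditions ($\prod a_i = \prod c_i = 1$, plus a disjunction of three easily-checkable conditions) are all testable in polynomial time in the input size.
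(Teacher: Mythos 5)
Your overall approach is the same as the paper's: conjugate each $C_i$ inside $\TL{2}{p}$, track the top-right entry of the conjugate as a function of the free parameters, and then observe that the top-right entry of the product is an affine-linear combination (with unit coefficients) of the individual top-right entries. Your necessity argument and the first two sufficiency cases are fine. However, there is a genuine gap in the third sufficiency case, rooted in a miscalculation of the range of the top-right entry when $a_i = c_i$.

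You claim that when $a_i = c_i$ and $b_i \neq 0$, the top-right entry $\tfrac{s_i}{u_i}b_i$ of the conjugate ranges over $b_i\cdot\Z_p^\times\cup\{0\}$, hence over all of $\Z_p$. This is false: $s_i$ and $u_i$ are diagonal entries of an invertible triangular matrix, so $s_i/u_i$ is always a \emph{unit}, and the set of attainable values is $b_i\cdot\Z_p^\times = \Z_p^\times$, which never contains $0$. (Your own necessity argument two paragraphs later correctly uses exactly this fact, that the entry is a nonzero unit times $b_{i_0}\neq 0$, so the two paragraphs contradict each other.) This matters in the third sufficiency case: with all $a_j=c_j$ and $b_i,b_\ell\neq 0$, you write ``choose $e_i$ (say) to make $e=0$,'' but the required value of $e_i$ is an affine expression in the fixed quantities and might be $0$, which is unreachable. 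The fix is the one the paper uses: you must also vary $e_\ell$ (equivalently $x_\ell$). As $e_\ell$ ranges over $\Z_p^\times$, the ``residual'' (everything except the $i$-th term) is an injective affine function of $e_\ell$ and so equals $0$ for at most one of the $p-1$ admissible values; provided $p\ge 3$ there is a choice of $e_\ell$ making the residual nonzero, after which $e_i\in\Z_p^\times$ exists. Note this is not merely cosmetic: for $p=2$ (where $\Z_p^\times$ is a singleton) this step genuinely fails, and indeed the lemma as stated is false for $p=2$ (take $C_1=C_2=C_3=\bigl(\begin{smallmatrix}1&1\\0&1\end{smallmatrix}\bigr)$ in $\TL{2}{2}$: the listed conditions hold, but the group is abelian and $C_1C_2C_3\neq I$). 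The paper's proof shares this $p=2$ blind spot, so you are in good company, but your write-up should restrict to $p\ge 3$ and carry out the two-parameter argument explicitly rather than relying on the incorrect ``ranges over all of $\Z_p$'' claim.
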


\begin{proof}
In the following we write $X_i=\begin{pmatrix}
x_i & y_i \\
0 & z_i
\end{pmatrix}$  and  $M_k=\prod_{i=1}^k X_iC_iX_i^{-1}$. Further set $\alpha_i=(y_i(c_i-a_i)+x_ib_i)z_i^{-1}$ and $\prA{j}=\prod_{i=1}^{j}a_i$ and $\prC{j}{k}=\prod_{i=j}^{k}c_i$. 
We claim that 
\begin{align}\label{MatrixFormula}
M_k=\begin{pmatrix}
\prA{k} &~ \sum_{j=1}^{k} \prA{j-1}\cdot \alpha_{j} \cdot \prC{j+1}{k}  \\[3mm]
0& \prC{1}{k}
\end{pmatrix}.
\end{align}
First observe that
\begin{align*}
X_iC_iX_i^{-1} &=\begin{pmatrix}
x_i & y_i \\
0& z_i
\end{pmatrix} 
\begin{pmatrix}
a_i & b_i\\
0 & c_i
\end{pmatrix}
\begin{pmatrix}
x_i^{-1} &~ -y_i(x_iz_i)^{-1}\\[0.7mm]
0 & z_i^{-1}
\end{pmatrix}\\
&= \begin{pmatrix}
a_i &\quad (y_i(c_i-a_i)+x_ib_i)z_i^{-1}\\[0.7mm]
0& c_i
\end{pmatrix}
\end{align*}
This shows the claim for $k=1$. Observe that $\cC_{j+1,j}=1$ and $\cC_{j,k}\cdot c_{k+1}=\cC_{j,k+1}$. Thus, by induction we obtain that 
\begin{align*}
M_{k+1}&= 
\begin{pmatrix}
\prA{k+1} &~~\prA{k}\cdot\alpha_{k+1}  + \left( \sum_{j=1}^{k} \prA{j-1}\cdot \alpha_{j}\cdot \prC{j+1}{k}\right)\cdot c_{k+1} \\[2mm]
0&\prC{1}{k+1}
\end{pmatrix}\\[4mm]&
=\begin{pmatrix}
\prA{k+1} & ~ \sum_{j=1}^{k+1} \prA{j-1}\cdot \alpha_{j}\cdot \prC{j+1}{k+1}  \\[2mm]
0&\prC{1}{k+1} 
\end{pmatrix}.
\end{align*}
This shows the claim.
Observe that the diagonal entries of $M_{k}$ are $\prod_{i=1}^{k} a_i$ and $\prod_{i=1}^{k} c_i$. So both have to evaluate to one for a solution to exist.
Thus, $X_1, \dots, X_k$ as above are a solution to the spherical equation $M_k = 1$ if and only if $\prod_{i=1}^{k} a_i=\prod_{i=1}^{k} c_i=1$ and
\begin{align}\label{EqtoSolve}
    \sum_{j=1}^{k} \prA{j-1} \cdot \prC{j+1}{k}\cdot(y_j(c_j-a_j)+x_jb_j)\cdot z_j^{-1}=0.
\end{align}

 Let us assume that $\prod_{i=1}^{k} a_i=\prod_{i=1}^{k} c_i=1$ and distinguish the three cases as in the lemma. First assume that $c_i-a_i \neq 0$ for one $i$. Because $\det(A_i), \det(C_i) \neq 0$ for all $i$, $\prA{j}, \prC{j}{k}$ and $z_j$ are invertible for all $j$. Thus, equation \eqref{EqtoSolve} is equivalent to  \[y_i=  \Big((-z_i^{-1}\cdot\prA{i-1}\cdot\prC{i+1}{k})^{-1} \cdot \sum_{j \neq i} \prA{j-1} \cdot \prC{j+1}{k} \cdot \alpha_{j} -x_ib_i\Big) \cdot (c_i-a_i)^{-1}\]
Now for any choice of $x_j\neq 0, z_j\neq 0$ for all $j$ and  $y_j$ for $j \neq i$, this has a solution for $y_i$. Observe that we do not have any restrictions on $y_i$.\\

Hence, from now on, we can assume that $c_j=a_j$ for all $j$. 
First, assume that $c_j-a_j=b_j=0$ for all $j$. Then the matrices $C_j$ are all scalar and thus in the center and \eqref{EqtoSolve} has a solution. 

Next, consider the case that there is exactly one $i$ with $b_i\neq 0$ and $b_j = 0$ for $j \neq i$.
Then in \eqref{EqtoSolve} the left side is $\prA{i-1} \cdot \prC{i+1}{k}\cdot x_i\cdot b_i\cdot z_i^{-1}$. Because $\prA{i-1},\prC{i+1}{k},b_i,  z_i^{-1}$ are all invertible,  \eqref{EqtoSolve} has no solution with $x_i \neq 0$.

Finally, assume that there are $b_\ell\neq 0$, $ b_h \neq 0$ with $h \neq \ell$. Choose $x_j,z_j \neq 0$ for $j \not\in \{h, \ell\}$ arbitrary and write 
\begin{align*}
    \xi = \sum_{j\not\in \{h, \ell\} } \prA{j-1} \cdot \prC{j+1}{k}\cdot x_j \cdot b_j \cdot z_j^{-1}.
\end{align*}
Then \eqref{EqtoSolve} is equivalent to 
\begin{align*}
    -\prA{h-1} \cdot \prC{h+1}{k}\cdot x_h \cdot b_h \cdot z_h^{-1} -\xi =  \prA{\ell-1} \cdot \prC{\ell+1}{k} \cdot x_\ell \cdot  b_\ell \cdot  z_\ell^{-1}.
\end{align*}
Now, we can make the left side non-zero by choosing an appropriate $x_h \neq 0$. Once the left side is non-zero, we can set 
\begin{align*}
x_\ell=  \left( -\prA{h-1} \cdot \prC{h+1}{k}\cdot x_h \cdot b_h \cdot z_h^{-1} -\xi\right) \cdot(-z_\ell^{-1}\cdot\prA{\ell-1}\cdot\prC{\ell+1}{k})^{-1} \cdot b_\ell^{-1}
\end{align*}
and obtain a solution of \eqref{EqtoSolve}. 
\qed\end{proof}

\subsection{$\DPSE{(\GL{2}{p})_{p \in \prm}}$ is in \P}
\newcommand{\qta}{a}
\newcommand{\rtb}{b}
\newcommand{\xtc}{c}
\newcommand{\ytd}{d}

\newcommand{\atv}{v}
\newcommand{\btx}{x}
\newcommand{\cty}{y}
\newcommand{\dtz}{z}
\newcommand{\ztZ}{Z}

Consider a $2$-by-$2$ matrix
\begin{align*}
A=
\begin{pmatrix}
a& b \\
c& d \\
\end{pmatrix}
\end{align*}
with entries $a,b,c,d\in\MZ_p$. By definition, its \emph{determinant}
$\det(A)$ is $ad-bc$ and its \emph{trace} $\tr(A)$ is $a+d$.
Recall that $A$ is invertible if and only if $\det(A)\ne 0$ and, if this is the case, its inverse is
\[
A^{-1}=
\frac{1}{\det(A)}
\begin{pmatrix}
d& -b \\
-c& a \\
\end{pmatrix}.
\]
The determinant and the trace of $A$ are invariants of
the conjugacy class of $A$, i.e., conjugation of $A$ preserves their values.
The \emph{characteristic polynomial} for $A$ is defined by
\begin{align*}
p_A(\lambda)=
\det(A-\lambda I)  = 
\left|
\begin{array}{cc}
a-\lambda& b \\
c& d-\lambda \\
\end{array}
\right| 
=\lambda^2-\tr(A)\lambda+\det(A)
\end{align*}
and is an invariant for the conjugacy class of  $A$. Its zeros $\lambda_1,\lambda_2$
are called the \emph{eigenvalues} for $A$.
If $p_A(\lambda)$ is irreducible, then 
$\lambda_1,\lambda_2\in \MZ_p[\sqrt{(\tr(A))^2-4\det(A)}] = \Z[\xi]$ for any quadratic non-residue $\xi \in \Z_p$.
Otherwise, we have $\lambda_1,\lambda_2\in\MZ_p$.
It is a common knowledge that $\tr(A)=\lambda_1+\lambda_2$
and $\det(A)=\lambda_1\lambda_2$.

Let $K$ be a field extension of $\Z_p$. We say that matrices $A,B\in\GL{2}{K}$ are \emph{similar} if
$A=X^{-1}BX$ for some $X\in \GL{2}{\overline{K}}$ where $\overline{K}$ is the algebraic closure of $K$.
Every $A\in\GL{2}{p}$ 
with the eigenvalues $\lambda_1,\lambda_2$ in a field extension $K$ is similar to
one of the following matrices:
\small
\begin{align*}
\begin{pmatrix}
\lambda_1 & 0 \\
0& \lambda_2 \\
\end{pmatrix}
\mbox{ or }
\begin{pmatrix}
\lambda & 1 \\
0& \lambda \\
\end{pmatrix}
\end{align*}
\normalsize
where in the second case $\lambda=\lambda_1=\lambda_2$.
Similarity of matrices does not depend on the base field, 
i.e., if $L$ is an extension of $K$, and $A$ and $B$ are two matrices over $K$, then $A$ and $B$ are conjugate as matrices over $K$ if and only if they are conjugate as matrices over $L$ 
(because the rational canonical form over $K$ is also 
the rational canonical form over $L$).
 This means that one may use Jordan forms that only exist over a larger field to determine whether the given matrices are similar.

In our analysis we distinguish matrices of four types (refining the two cases from above). We say that $A$ is
\begin{itemize}
\item 
\emph{scalar} if it is diagonal 
and $\lambda_1=\lambda_2\in \Z_p$.
\item 
of type-I if 
$\lambda_1,\lambda_2\in \Z_p$ and $\lambda_1\ne\lambda_2$. 
In this case $A$ is similar to 
\small
\begin{align*}
\begin{pmatrix}
\lambda_1 & 0 \\
0& \lambda_2 \\
\end{pmatrix}.
\end{align*}
\normalsize
\item 
of type-II if 
$\lambda_1,\lambda_2\notin \Z_p$ and $\lambda_1\ne\lambda_2$. 
In this case $A$ is similar to 
\small
\begin{align*}
\begin{pmatrix}
s & t \\
1& s \\
\end{pmatrix}
\end{align*}
where $s,t \in \Z_p$, $t$ is not a quadratic residue, $\det(A)=s^2-t$, and $\tr(A)=2s$.
\item of type-III if  $A$ is not scalar,
$\lambda_1,\lambda_2\in \Z_p$ and $\lambda_1=\lambda_2$. In this case  $A$ is similar to 
\small
\begin{align*}
\begin{pmatrix}
\lambda_1 & 1 \\
0& \lambda_1 \\
\end{pmatrix}.
\end{align*}
\normalsize
\end{itemize}
Every $A\in \GL{2}{p}$ classifies as a matrix of
one type defined above. Note that, if $A$ is non-scalar, its conjugacy class is uniquely defined by its trace and determinant (however, scalar matrices can have the same trace and determinant as type-III matrices).
For $S\subseteq \GL{2}{p}$ define
\begin{align*}
    \tr(S) = \Set{\tr(A)}{A\in S}.
\end{align*}
Notice that, because the trace is invariant under conjugation, for any $A,B\in\GL{2}{p}$ we have
\begin{align*}
\tr(\Set{A^YB^Z}{Y,Z\in\GL{2}{p}}) = \tr(\Set{AB^Z}{Z\in\GL{2}{p}}).
\end{align*}
 \normalsize  For matrices $A,B$ we write \[T(A,B)=\tr(\Set{AB^Z}{Z\in\GL{2}{p})}.\] 
Before we investigate $T(A,B)$ further, let us recall some tools from computational number theory.

\subsubsection{Number theory basics.}
We will use the following two results from computational number theory.
Computing the Legendre/Jacobi symbol is a standard result, see e.g.\ \cite[Theorem 83]{hardy75}:

\begin{lemma}\label{lem:computeJacobi}
    There is a polynomial-time algorithm that given $a \in \Z$ and a prime $p$ decides whether $a$ is a square modulo $p$.
\end{lemma}

\begin{theorem}[Tonelli-Shanks algorithm, see \cite{edam16}]\label{thm:tonelli}
There is an algorithm that finds a solution to $x^2\equiv c \bmod p$
in random polynomial time.
\end{theorem}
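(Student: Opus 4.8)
The final displayed statement is the Tonelli--Shanks theorem: there is a random polynomial-time algorithm that, given a prime $p$ and $c \in \Z_p$ with $c$ a quadratic residue modulo $p$, outputs $x$ with $x^2 \equiv c \bmod p$. Since this is a classical result and the paper only needs it as a black box, the plan is to give the standard argument rather than anything new. First I would dispose of trivial cases: if $c \equiv 0$ return $0$, and we may use \cref{lem:computeJacobi} to verify that $c$ is actually a quadratic residue (otherwise there is nothing to find). The core of the algorithm splits according to the $2$-adic valuation of $p-1$.

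If $p \equiv 3 \bmod 4$, then I claim $x = c^{(p+1)/4} \bmod p$ works: indeed $x^2 = c^{(p+1)/2} = c \cdot c^{(p-1)/2} = c$ by Euler's criterion, since $c^{(p-1)/2} \equiv 1$ for a quadratic residue. This is fully deterministic and uses only fast modular exponentiation, which is polynomial time. The general case $p \equiv 1 \bmod 4$ is where randomization enters. Write $p - 1 = 2^e m$ with $m$ odd and $e \geq 1$. The plan is: (i) pick a uniformly random $z \in \Z_p^\times$ and test via \cref{lem:computeJacobi} whether $z$ is a quadratic non-residue; since exactly half the nonzero elements are non-residues, after an expected $2$ trials we obtain such a $z$, and then $g := z^m$ has multiplicative order exactly $2^e$. (ii) Set $t := c^m$ and $R := c^{(m+1)/2}$; then $R^2 = c \cdot t$, and $t$ lies in the cyclic $2$-group generated by $g$ because $t^{2^{e-1}} = c^{(p-1)/2} = 1$. (iii) Iteratively reduce the order of $t$: at each stage, if $t = 1$ we are done and return $R$; otherwise find the least $i$ with $t^{2^i} = 1$, set $b := g^{2^{e-i-1}}$, and replace $(R, t, g, e) \leftarrow (Rb, t b^2, b^2, i)$. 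One checks the loop invariants $R^2 = c\,t$ and $\mathrm{ord}(t) \mid 2^{e}$ are maintained while the new $e$ strictly decreases, so the loop terminates in at most the original $e \le \log_2 p$ iterations, each doing $O(\log p)$ modular multiplications.

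The only genuinely probabilistic ingredient is step (i), finding a quadratic non-residue; everything else is deterministic once $z$ is in hand, and the expected number of random draws is $2$, giving overall expected polynomial running time. I would remark that, as noted in the paper's discussion of random polynomial time, correctness can be checked by squaring the output, so this also yields a Las Vegas algorithm. The main (and only real) obstacle is verifying the loop invariants and the termination bound in step (iii); since this is textbook material I would simply cite a standard reference such as \cite{edam16} for the details rather than reproducing the full inductive verification. No step presents a conceptual difficulty.
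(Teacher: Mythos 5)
Your account is a correct and complete sketch of the standard Tonelli--Shanks algorithm, including the $p\equiv 3\bmod 4$ shortcut, the use of a random quadratic non-residue as the sole source of randomness, and the invariants $R^2=ct$ and $\mathrm{ord}(t)\mid 2^{e-1}$ with strictly decreasing $e$ that bound the loop by $\log_2 p$ iterations. The paper does not give its own proof of this statement at all---it simply cites it from \cite{edam16} as a classical black box---so there is nothing to compare against, and your proposal is the standard textbook argument one would find in that reference; no corrections are needed.
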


\begin{theorem}[L. Adleman, D. Estes, K. McCurley, \cite{Adleman-Estes-McCurley:1987}]
\label{th:bivariate}
There exists an algorithm that, upon input $k$, $m$, and $n$
with odd $n$ and $\gcd(km,n) = 1$, will output a solution to the congruence 
\begin{equation}\label{eq:bivariate}
x^2-ky^2 \equiv m \bmod n
\end{equation}
with probability $1 - \varepsilon$ in 
$O(\log(\varepsilon^{-1}\log|k|)\log^3(n) \log|k|)$ steps. 
\end{theorem}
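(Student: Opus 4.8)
Since this statement is quoted from \cite{Adleman-Estes-McCurley:1987}, let me sketch the strategy one would follow to establish it. The plan is to trade the composite modulus $n$ for a prime modulus by passing to a well-chosen auxiliary prime, and then to solve the resulting equation over $\mathbb{Z}$ using classical reduction theory for ternary quadratic forms. After pulling the square part out of $k$ we may assume $k$ is squarefree. First I would pick $t$ at random in a suitable range and set $q := m + tn$, restricting $t$ to a fixed residue class modulo $4|k|$ chosen so that --- by quadratic reciprocity, using \cref{lem:computeJacobi} to evaluate the relevant symbols --- both ``$k$ is a square modulo $q$'' and ``$q$ is a square modulo $|k|$'' hold. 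Since $\gcd(km,n)=1$ and $n$ is odd we have $\gcd(4|k|,n)=1$, so these congruence conditions are compatible with $q\equiv m\pmod n$; by Dirichlet's theorem such primes exist, and testing the candidates for primality (e.g.\ via \cite{Agrawal04} or a randomized test) finds one after $O(\log(|k|n))$ trials in expectation.

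Next I would observe that the ternary equation $X^2 - kY^2 - qW^2 = 0$ is isotropic over $\mathbb{Q}$: it is indefinite whatever the sign of $k$ (as $q>0$), and Legendre's local solvability conditions for it are precisely ``$k$ is a square mod $q$'' and ``$q$ is a square mod $|k|$'', which were arranged in the first step. By the effective form of Legendre's theorem it therefore has a nontrivial primitive integer solution $(X_0,Y_0,W_0)$, which can be produced in polynomial time by a descent in the spirit of Gauss's reduction of indefinite ternary forms, bootstrapped from a square root of $k$ modulo $q$ supplied by \cref{thm:tonelli}. Reducing modulo $n$ gives $X_0^2 - kY_0^2 \equiv mW_0^2 \pmod n$; if $\gcd(W_0,n)=1$ one outputs $x\equiv X_0W_0^{-1}$, $y\equiv Y_0W_0^{-1}\pmod n$, and otherwise $\gcd(W_0,n)$ is a nontrivial factor of $n$, so one recurses on the coprime parts of $n$ and recombines by the Chinese Remainder Theorem (the recursion has depth $O(\log n)$). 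Finally, repeating the whole procedure $O(\log\varepsilon^{-1})$ times boosts the success probability to $1-\varepsilon$, and adding up the bit-costs of the primality tests, the Jacobi-symbol evaluations, the square root modulo $q$, and the form reduction yields the stated bound.

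The main obstacle is the middle step: converting Legendre's existence theorem into a genuine polynomial-time search for the isotropic vector $(X_0,Y_0,W_0)$, while keeping every intermediate quantity of size polynomial in $\log(|k|n)$ --- this reduction-theoretic core is the real content of the theorem. A secondary subtlety is guaranteeing that $n$ is never factored except when a factor falls out for free from $\gcd(W_0,n)$: routing the argument through the auxiliary prime $q$ and the \emph{homogeneous} ternary equation, rather than (say) trying to extract a square root directly modulo the composite $n$, is exactly what makes this possible.
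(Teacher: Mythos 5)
The paper does not prove this result; it is quoted verbatim from Adleman--Estes--McCurley \cite{Adleman-Estes-McCurley:1987} and used as a black box (in \cref{le:type2function}). You correctly recognize this and offer a reconstruction of the cited proof, so there is no internal argument of the paper to compare against.

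Your reconstruction does track the structure of the Adleman--Estes--McCurley algorithm: reduce to squarefree $k$, randomly search the arithmetic progression $m + tn$ for an auxiliary prime $q$ with Jacobi/Legendre symbol conditions arranged via reciprocity, pass to the homogeneous ternary form $X^2 - kY^2 - qW^2 = 0$, verify Legendre's local solvability conditions, find an isotropic vector by an effective (Gauss/Lagrange-type) reduction seeded by a square root of $k$ modulo $q$, and reduce modulo $n$. You also correctly single out the two genuine technical difficulties: making the descent for the isotropic vector run in polynomial time with polynomially bounded intermediate data, and avoiding ever needing to factor $n$. One step, though, is stated too quickly: from ``$\gcd(W_0,n) \neq 1$'' you conclude that $\gcd(W_0,n)$ is a \emph{nontrivial} factor of $n$, but a priori it could equal $n$, and even when it is proper, obtaining a factor of $n$ is not automatically a win since the algorithm is supposed to succeed without factoring $n$; the original paper handles this by preconditioning $m$ (multiplying by a random unit of the form $\lambda^2 - k\mu^2$) so that, with high probability over the random choices, the recovered $W_0$ is a unit modulo $n$, rather than by recursing on a factorization of $n$. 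Aside from that detail your outline is a fair summary of the cited argument.
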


We require the following lemma.

\begin{lemma}\label{le:type2function}
Suppose that $p\ge 3$.
Then for all $k\in \MZ_p$ and all quadratic non-residues $t,\rtb\in \MZ_p$, there exist $\btx \in \MZ_p$ and $u\ne 0$ satisfying
\begin{equation}\label{eq:type2-main-eq}
u \rtb + \tfrac{1}{u} t - \tfrac{1}{u} \btx^2 = k.
\end{equation}
Furthermore, given $k$, $t$, and $b$, the
required $\btx$ and $u$ can be found 
in random polynomial time.
\end{lemma}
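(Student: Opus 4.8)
The plan is to reduce equation~\eqref{eq:type2-main-eq} to the solvable bivariate congruence of \cref{th:bivariate}. First I would clear denominators: multiplying \eqref{eq:type2-main-eq} by $u$ turns it into the quadratic-in-$u$ equation $\rtb\, u^2 - k\, u + (t - \btx^2) = 0$, where $\btx$ is still a free parameter. Solving for $u$ via the quadratic formula, a solution exists precisely when the discriminant $k^2 - 4\rtb(t-\btx^2) = k^2 - 4\rtb t + 4\rtb\, \btx^2$ is a quadratic residue (or zero) in $\MZ_p$; say it equals $w^2$. So the task becomes: find $\btx, w \in \MZ_p$ with
\begin{align*}
w^2 - 4\rtb\, \btx^2 = k^2 - 4\rtb t,
\end{align*}
which is exactly an instance of \eqref{eq:bivariate} with $y$-coefficient $k = 4\rtb$ (in the notation of \cref{th:bivariate}), right-hand side $m = k^2 - 4\rtb t$, and modulus $n = p$.

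Next I would check the hypotheses of \cref{th:bivariate}. The modulus $p$ is odd since $p \ge 3$. For $\gcd$ condition $\gcd(4\rtb \cdot (k^2-4\rtb t), p) = 1$: since $p$ is prime it suffices that neither factor is $0 \bmod p$. We have $4\rtb \not\equiv 0$ because $\rtb$ is a (non-zero) quadratic non-residue. The quantity $k^2 - 4\rtb t$ could a priori vanish, so this is the case that needs separate handling — and I expect this to be the one genuinely fiddly point. If $k^2 - 4\rtb t = 0$, then the congruence above is solved by $w = \btx = 0$; but we must then double-check that this choice produces a valid $u \ne 0$. With $w = 0$ the quadratic $\rtb u^2 - ku + (t - 0) = 0$ has the double root $u = k/(2\rtb)$; this is non-zero iff $k \ne 0$. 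And indeed if $k^2 = 4\rtb t$ with $t$ a non-residue and $\rtb$ a non-residue, then $4\rtb t$ is a residue, so $k^2 = 4\rtb t$ forces $k \ne 0$ (a residue equal to $k^2$ is nonzero). So in the degenerate case we are still fine. Alternatively, one can avoid the degenerate subcase entirely by noting we have freedom in $\btx$: perturbing $\btx$ slightly changes the right-hand side, so one could instead fix a convenient $\btx$ making $k^2 - 4\rtb t + 4\rtb \btx^2 \ne 0$ and then solve — but invoking \cref{th:bivariate} directly with the degenerate-case check is cleaner.

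Finally, for the algorithmic claim: \cref{lem:computeJacobi} and \cref{thm:tonelli} (or directly \cref{th:bivariate}) run in random polynomial time, so given $k, t, \rtb$ we obtain $\btx$ and $w$ in random polynomial time, then recover $u$ by one more application of \cref{thm:tonelli} to extract the square root in the quadratic formula $u = (k \pm w)/(2\rtb)$ — actually $w$ \emph{is} the square root already, so $u$ is computed by elementary field arithmetic. One should then verify $u \ne 0$: from $u = (k+w)/(2\rtb)$ and $u = (k-w)/(2\rtb)$, at least one is non-zero unless $k = w = 0$, which would force $t = \btx^2$ to be a residue, contradicting that $t$ is a non-residue; so at least one choice of sign gives $u \ne 0$. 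Substituting back confirms \eqref{eq:type2-main-eq}. The main obstacle, as noted, is the bookkeeping around the degenerate discriminant and ensuring $u \ne 0$ — the residue/non-residue parity of $\rtb t$ is exactly what rescues both subtleties, so the proof hinges on using the hypothesis that \emph{both} $t$ and $\rtb$ are non-residues.
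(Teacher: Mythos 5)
Your proposal is correct and follows essentially the same route as the paper: both clear denominators to the quadratic $\rtb u^2 - ku + (t - \btx^2) = 0$, recast the discriminant-is-a-square condition as the bivariate congruence $w^2 - 4\rtb\,\btx^2 \equiv k^2 - 4\rtb t$ of \cref{th:bivariate}, and use the hypothesis that both $t$ and $\rtb$ are non-residues to dispose of the degenerate cases. If anything you are a bit more explicit than the paper about checking the $\gcd$ hypothesis of \cref{th:bivariate} (the $k^2 - 4\rtb t \equiv 0$ subcase) and about verifying $u\neq 0$, whereas the paper argues existence via a pigeonhole count over the $(p+1)/2$ values taken by $4\rtb(t-\btx^2)$ and is terser on the $u\neq 0$ check.
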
 

\begin{proof}
We claim that for some choice of $\btx$ the quadratic equation
\begin{align*}
\rtb u^2 - ku + (t-\btx^2) = 0
\end{align*}
has a nontrivial solution of the form 
$u=\tfrac{1}{2\rtb}\big(k\pm\sqrt{k^2-4\rtb(t-\btx^2)}\big)$.
Indeed, the expression $4\rtb(t-\btx^2)$ assumes $(p+1)/2$ values. 
Hence, there exists a value for $\btx$ such that the discriminant 
$k^2-4\rtb(t-\btx^2)$ is a square.
By \cref{th:bivariate} we can find such value in random polynomial time.

Now, if $k^2-4\rtb(t-\btx^2)\ne 0$, then our choice of $\btx$ gives a nontrivial solution $u$
(since $p\ne 2$).
If $k^2-4\rtb(t-\btx^2)=0$, then the solution is unique. It is zero
if and only if $k=0$, in which case $-4\rtb(t-\btx^2)=0$.
That is impossible because $\rtb,t$ are quadratic non-residues modulo $p$.
\qed\end{proof}

\subsubsection{Conjugacy and type-finding.}

\begin{lemma}\label{prop:typefinding}
    The following problems are in polynomial time:
    \begin{enumerate}[(a)]
\item Given $A\in\GL{2}{p}$, compute its trace, determinant and characteristic polynomial.
    
 \item Given $A,B\in\GL{2}{p}$, decide whether they are conjugate.

        \item Given $A\in\GL{2}{p}$, compute its type. 

        If $A$ is of type-II, compute a matrix $\begin{pmatrix}
s & t \\
1 & s \\
\end{pmatrix}$ that is similar to $A$.

        If $A$ is of type-III, compute a matrix $\begin{pmatrix}
s & 1 \\
0 & s \\
\end{pmatrix}$ that is similar to $A$.
     
    \end{enumerate}
    Moreover, if $A\in\GL{2}{p}$ is of type-I, we can compute in random polynomial time a conjugate matrix of the form $\begin{pmatrix}
s & 0 \\
0 & t \\
\end{pmatrix}$.

Finally, given conjugate matrices $A,B\in\GL{2}{p}$, one can compute a matrix $Z \in \GL{2}{p}$ with $A = B^Z$ in randomized polynomial time.
\end{lemma}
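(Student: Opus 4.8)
The plan is to handle the parts in increasing order of difficulty, reducing everything to elementary linear algebra over $\Z_p$ together with the two number-theoretic subroutines \cref{lem:computeJacobi} and \cref{thm:tonelli}. Part (a) is immediate: $\tr(A)=a+d$, $\det(A)=ad-bc$ and $p_A(\lambda)=\lambda^2-\tr(A)\lambda+\det(A)$ are obtained by a bounded number of arithmetic operations modulo $p$. For part (b) I would first run the syntactic test of whether $A$ (resp.\ $B$) is scalar, i.e.\ whether $b=c=0$ and $a=d$. If both are scalar they are conjugate iff they are equal; if exactly one is scalar they are not conjugate; and if neither is scalar they are conjugate iff $\tr(A)=\tr(B)$ and $\det(A)=\det(B)$, because a non-scalar $2\times 2$ matrix has minimal polynomial equal to its characteristic polynomial and is therefore conjugate over $\Z_p$ to the companion matrix of that polynomial. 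All of this uses only part (a).

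For part (c) I would compute the discriminant $D=\tr(A)^2-4\det(A)$. If $D=0$ then $\lambda_1=\lambda_2=\tr(A)/2\in\Z_p$, so $A$ is scalar or of type-III according to the syntactic scalar test; in the type-III case I output $\begin{pmatrix} s & 1 \\ 0 & s \end{pmatrix}$ with $s=\tr(A)/2$, which is non-scalar and has the same trace and determinant as $A$ (note $\det(A)=s^2$ here), hence is conjugate to $A$ by part (b). If $D\ne 0$, I apply \cref{lem:computeJacobi} to decide whether $D$ is a square modulo $p$: if so, $A$ is of type-I; if not, $A$ is of type-II, and I output $\begin{pmatrix} s & t \\ 1 & s \end{pmatrix}$ with $s=\tr(A)/2$ and $t=s^2-\det(A)=D/4$, which is a quadratic non-residue because $D$ is and $4$ is a square, and which is non-scalar with trace $2s=\tr(A)$ and determinant $s^2-t=\det(A)$, hence conjugate to $A$. (The prime $p=2$ is a trivial special case since $\GL{2}{2}$ has order $6$; from now on $p$ is odd, so division by $2$ and $4$ modulo $p$ is legitimate.) For the ``moreover'' part, when $A$ is of type-I its eigenvalues are the roots $\tfrac{1}{2}\bigl(\tr(A)\pm\sqrt{D}\bigr)$ of $p_A$, so a call to Tonelli--Shanks (\cref{thm:tonelli}) returns $\sqrt{D}$ and hence $s\ne t$ in $\Z_p$; then $\begin{pmatrix} s & 0 \\ 0 & t \end{pmatrix}$ is a matrix over $\Z_p$, non-scalar, with the characteristic polynomial of $A$, hence conjugate to $A$.

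For the final statement I would avoid chasing eigenvectors (which for a type-II matrix exist only in a quadratic extension of $\Z_p$) and instead solve, by Gaussian elimination over $\Z_p$, the homogeneous linear system $ZA=BZ$ in the four unknown entries of $Z$. Since $A$ and $B$ are conjugate, say $A=Z_0^{-1}BZ_0$, the matrix $Z_0$ is an invertible solution, so the solution space $V\subseteq\Z_p^{2\times 2}$ is a nonzero subspace containing an invertible matrix. Fixing a basis $Z_1,\dots,Z_r$ of $V$, the map $q(c_1,\dots,c_r)=\det(c_1Z_1+\cdots+c_rZ_r)$ is a nonzero homogeneous polynomial of degree $2$, so a uniformly random point of $\Z_p^r$ satisfies $q\ne 0$ with probability at least $1-2/p$ by the Schwartz--Zippel bound; a constant number of repetitions (or, for $p\in\{2,3\}$, a brute-force search through $V$) yields an invertible solution $Z$, and then $A=Z^{-1}BZ$. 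The only non-elementary ingredients are the calls to \cref{lem:computeJacobi} and \cref{thm:tonelli}; the rest is bookkeeping, and the single point that needs a little care is extracting an \emph{invertible} element from the commuting-solution space in the last part, which is exactly what the Schwartz--Zippel argument provides.
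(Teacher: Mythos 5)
Your parts (a)--(c) and the ``moreover'' clause track the paper's argument essentially step by step: a bounded number of arithmetic operations for (a); the observation that a non-scalar $2\times 2$ matrix is determined up to conjugacy by its characteristic polynomial for (b); the discriminant test via \cref{lem:computeJacobi} and the explicit normal forms for (c); and Tonelli--Shanks (\cref{thm:tonelli}) to split $p_A$ for the type-I diagonalization. Your added care about the scalar/non-scalar distinction in (b) and the $p=2$ edge case is correct but not a real departure.

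Where you genuinely diverge from the paper is the final clause, computing a conjugating matrix $Z$. The paper computes eigenvalues over $\Z_p[\xi]$, solves $(A-\lambda_i I)x=0$ (and $(A-\lambda_1 I)^2 x=0$ in the type-III case) to get a basis of (generalized) eigenvectors, assembles the transition matrix, and then in the type-II case rescales eigenvectors by a scalar in $\Z_p[\xi]$ to land back in $\Z_p$. You instead solve the homogeneous $\Z_p$-linear system $ZA=BZ$ by Gaussian elimination, observe that the solution space $V$ contains an invertible element because $A$ and $B$ are conjugate in $\GL{2}{p}$, and then pick out an invertible element by random sampling: $q(c_1,\dots,c_r)=\det(c_1Z_1+\cdots+c_rZ_r)$ is a nonzero degree-$2$ polynomial, so Schwartz--Zippel gives success probability $\geq 1-2/p$, with brute force for $p\in\{2,3\}$. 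This is a valid alternative and arguably cleaner: it never leaves $\Z_p$, so the extension-field bookkeeping and the final rescaling step disappear, and it makes transparent exactly where randomness is used (to hit the determinant hypersurface's complement inside $V$). The paper's eigenvector route has the modest advantage of isolating \emph{where} the randomness comes from number-theoretically (a single Tonelli--Shanks call), whereas yours pushes it into a Schwartz--Zippel sampling step; both give randomized polynomial time, matching the statement.
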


\begin{proof}
Part (a) is clear since we only need to perform a bounded number of arithmetic operations. 
 To see (b), recall that two non-scalar 2-by-2 matrices $A,B$ are conjugate if and only if their characteristic polynomials agree. 

(c): Without loss of generality $A$ is non-scalar. To compute the type of  $A$, we need to compute the discriminant $\xi = (\tr(A))^2-4\det(A)$ of the characteristic polynomial, which can be done in polynomial time. By \cref{lem:computeJacobi}, we can decide whether $\xi$ is a quadratic residue.

If $\xi$ is a non-zero quadratic residue, we know that $A$ is of type-I.
If $\xi$ is a quadratic non-residue, we know that $A$ is of type-II and $A$ is similar to a matrix $\begin{pmatrix}
s & t \\
1& s \\
\end{pmatrix}$. We can compute $s$ and $t$ via $s = \tr(A)/2$ and $t = s^2 - \det(A)$.
 Now, if $\xi = 0$, we know that $A$ is of type-III and $A$ is similar to  $\begin{pmatrix}s & 1 \\
0& s \\
\end{pmatrix}$ where $s= \tr(A)/2$.

If $A$ is of type-I, by \cref{thm:tonelli} we can compute the square root of the discriminant in random polynomial time and, hence, compute the required conjugate matrix.

Finally, let $A,B\in\GL{2}{p}$ be conjugate matrices. We can compute their eigenvalues $\lambda_1,\lambda_2 \in \Z_p[\xi]$ in random polynomial time by \cref{thm:tonelli}. Then, we solve the system of linear equations $(A - \lambda_iI)x = 0$ (for type-III matrices, i.e.\ $\lambda_1=\lambda_2$, we also solve  $(A - \lambda_1 I)^2x = 0$) to compute a basis of (generalized) eigenvectors. These will form a conjugating matrix. Note that for type-II matrices we might get eigenvectors in the field extension $\Z_p[\xi]$. If this is the case, we multiply them by a scalar from $\Z_p[\xi]$ to bring them into the base field $\Z_p$.
\qed
\end{proof}

In the following paragraphs we will investigate $T(A,B)$. We distinguish three cases according to the type of $B$ (the case that one matrix is scalar is not of interest). We will write \small$Z=\begin{pmatrix}
\atv& \btx \\
\cty& \dtz \\
\end{pmatrix}$.

\subsubsection{Type--I.}
Consider matrices
\small
\begin{align*}
A=
\begin{pmatrix}
\qta& \rtb \\
\xtc& \ytd \\
\end{pmatrix}
,\qquad 
B=
\begin{pmatrix}
s& 0 \\
0& t \\
\end{pmatrix}
\in\GL{2}{p}.
\end{align*}
\normalsize

\begin{lemma}\label{pr:type1trace} 
If $A$ and $B$ are non-scalar (i.e.\ $B$ of type-I), then $T(A,B)=\MZ_p$. 
Furthermore, given $A$ and $B$ as well as $k\in \MZ_p$ one can find $\ztZ\in\GL{2}{p}$ satisfying $\tr(AB^\ztZ)=k$ in polynomial time.
\end{lemma}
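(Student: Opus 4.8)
The plan is to compute $\tr(AB^Z)$ explicitly as a function of the entries of $Z$ and show that, as $Z$ ranges over $\GL{2}{p}$, this expression can be made to equal any prescribed $k\in\MZ_p$. Since $B=\begin{pmatrix} s&0\\ 0&t\end{pmatrix}$ is diagonal with $s\ne t$, conjugating $B$ by $Z=\begin{pmatrix} \atv&\btx\\ \cty&\dtz\end{pmatrix}$ (with $\det(Z)=\atv\dtz-\btx\cty\ne 0$) gives, after multiplying out $Z B Z^{-1}$,
\begin{align*}
B^Z \;=\; \frac{1}{\atv\dtz-\btx\cty}\begin{pmatrix} \atv\dtz s-\btx\cty t & \atv\btx(t-s)\\ \cty\dtz(s-t) & \atv\dtz t-\btx\cty s\end{pmatrix}.
\end{align*}
First I would form the product $AB^Z$ with $A=\begin{pmatrix} \qta&\rtb\\ \xtc&\ytd\end{pmatrix}$ and read off its trace. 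A short calculation shows
\begin{align*}
\tr(AB^Z)\;=\;\frac{(\qta\atv\dtz-\qta\btx\cty)s + (\ytd\atv\dtz-\ytd\btx\cty)t + \rtb\cty\dtz(s-t) + \xtc\atv\btx(t-s)}{\atv\dtz-\btx\cty},
\end{align*}
which simplifies, writing $\Delta=\atv\dtz-\btx\cty$, to $\tr(AB^Z)=\qta s + \ytd t + \frac{(s-t)(\rtb\cty\dtz-\xtc\atv\btx)+(t-s)\ytd\btx\cty+(s-t)\qta\btx\cty}{\Delta}$ — in any case a ratio of two bilinear forms in the rows of $Z$ whose denominator is $\det Z$.

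Next I would exploit the one remaining degree of freedom. Fix $\btx=0,\ \dtz=1$ (so $\Delta=\atv\ne 0$ is forced); then the trace becomes $\qta s + \ytd t + \rtb\cty(s-t)/\atv = \tr(A)\cdot(\text{something})$... more precisely it becomes an affine function of the single free parameter $\cty/\atv$ with nonzero slope $\rtb(s-t)$ — provided $\rtb\ne 0$. This immediately yields every value $k$. If instead $\rtb=0$, I would symmetrically fix $\cty=0,\ \atv=1$ and vary $\btx$, getting slope $\xtc(t-s)$; this works unless also $\xtc=0$, i.e. $A$ is diagonal, in which case $A$ is conjugate to $\begin{pmatrix}\qta&0\\0&\ytd\end{pmatrix}$ with $\qta\ne\ytd$ (as $A$ is non-scalar of type-I), and I would instead use a conjugating matrix $Z$ that mixes the two eigendirections of $B$, e.g. $Z=\begin{pmatrix}1&1\\ \cty&1\end{pmatrix}$ with $\Delta=1-\cty$, reducing again to an affine (in fact Möbius) function of $\cty$ with nonzero variation because $\qta\ne\ytd$ and $s\ne t$. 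In each case the required $Z$ is obtained by solving one linear (or linear-fractional) equation, which is done in polynomial time, and the case distinctions themselves ($\rtb=0$?, $\xtc=0$?) are decidable by inspection.

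The main obstacle is organizing the case analysis cleanly: after plugging in a convenient family of conjugators $Z$, one must verify that the resulting trace, as a function of the free parameter, is genuinely non-constant, and this fails exactly for the degenerate sub-configurations (both off-diagonal entries of $A$ vanish, or the chosen denominator vanishes), which must be routed to an alternative choice of $Z$. Since $A$ and $B$ are both non-scalar with distinct eigenvalues, there is always \emph{some} direction in which the off-diagonal structure of $A$ is ``seen'' by the conjugation, so no configuration is genuinely stuck; making this uniform is the only real content, and everything else is bookkeeping and an appeal to linear algebra over $\MZ_p$ for the effective (polynomial-time) statement.
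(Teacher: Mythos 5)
Your overall strategy is the same as the paper's: expand $\tr(AB^Z)$ as an explicit rational expression in the entries of $Z$ and show that, by varying $Z$, it attains every value of $\MZ_p$. The first two cases ($b\ne0$ and $c\ne0$) are handled correctly: fixing the off-diagonal entry and the appropriate diagonal entry of $Z$ reduces the trace to an affine function with nonzero slope of a ratio that ranges over all of $\MZ_p$.

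There is, however, a genuine gap in your third case, where $A$ is diagonal ($b=c=0$, $\qta\ne\ytd$). You restrict to the one-parameter family $Z=\begin{pmatrix}1&1\\ \cty&1\end{pmatrix}$, which gives
\begin{align*}
\tr(AB^Z)=\qta s + \ytd t + (s-t)(\qta-\ytd)\cdot\frac{\cty}{1-\cty}.
\end{align*}
The map $\cty\mapsto\frac{\cty}{1-\cty}$ is a M\"obius transformation: as $\cty$ ranges over $\MZ_p\setminus\{1\}$ its image is $\MZ_p\setminus\{-1\}$, not all of $\MZ_p$. So this family of conjugators realizes exactly $p-1$ of the $p$ trace values and \emph{misses} the value $\qta t+\ytd s$ (which is nonetheless in $T(A,B)$, attained e.g.\ by the permutation matrix $Z$). ``Nonzero variation'' is not enough over a finite field for a fractional-linear parametrization to be surjective. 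To repair the argument you would either have to supply a second conjugator for the exceptional value, or keep a second free parameter (e.g.\ $Z=\begin{pmatrix}\atv&1\\ \cty&1\end{pmatrix}$ with both $\atv,\cty$ free, so the denominator $\atv-\cty$ can be steered past the pole). The paper avoids this issue structurally: it normalizes $\det Z=1$, so the trace becomes a \emph{polynomial} (no denominator) in the entries of $Z$, and in each case is affine in a single monomial ($\atv\cty$, $\btx\dtz$, or $\btx\cty$) that demonstrably ranges over all of $\MZ_p$ subject to the $\SL$ constraint. That normalization is what makes the case analysis clean.

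One small unrelated slip: your displayed un-simplified trace has ``$(\qta\atv\dtz-\qta\btx\cty)s+(\ytd\atv\dtz-\ytd\btx\cty)t$''; it should be $\qta(\atv\dtz s-\btx\cty t)+\ytd(\atv\dtz t-\btx\cty s)$. Your subsequent simplified formula is nevertheless correct, so this looks like a transcription error rather than a computational one.
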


\begin{proof}
Assume that $\atv\dtz-\btx\cty=1$ and write $M = AB^\ztZ$. Then \begin{align*}
M
&=
\begin{pmatrix}
\qta& \rtb \\
\xtc& \ytd \\
\end{pmatrix}
\begin{pmatrix}
\dtz& -\btx \\
-\cty& \atv \\
\end{pmatrix}
\begin{pmatrix}
s& 0 \\
0& t \\
\end{pmatrix}
\begin{pmatrix}
\atv& \btx \\
\cty& \dtz \\
\end{pmatrix}
\\[1mm]
&=
\begin{pmatrix}
\cty t (\atv \rtb - \btx \qta ) + \atv s (\dtz \qta - \cty \rtb ) &~ \dtz t (\atv \rtb - \btx \qta ) + \btx s (\dtz \qta - \cty \rtb ) \\
\cty t (\atv \ytd - \btx \xtc ) + \atv s (\dtz \xtc - \cty \ytd) &~ \dtz t (\atv \ytd - \btx \xtc ) + \btx s (\dtz \xtc - \cty \ytd) \\
\end{pmatrix}.
\end{align*}

Its trace can be simplified as follows:
\begin{align*}
\tr(M)
&\ \ =\ \ 
-\atv \cty \rtb s + \atv \cty \rtb t + \atv \dtz \qta s + \atv \dtz t \ytd - \btx \cty \qta t - \btx \cty s \ytd + \btx \dtz s \xtc - \btx \dtz t \xtc\\
&\ \ =\ \ 
-\atv \cty \rtb s + \atv \cty \rtb t + \btx \cty \qta s - \btx \cty \qta t - \btx \cty s \ytd + \btx \cty t \ytd + \btx \dtz s \xtc - \btx \dtz t \xtc + \qta s + t \ytd.
\end{align*}
Hence, we have
\begin{align*}
&T(A,B)\\&=\Set{\atv \cty (- \rtb s + \rtb t) + 
\btx \cty (\qta s - \qta t - s \ytd + t \ytd) + 
\btx \dtz (s \xtc - t \xtc) + 
\qta s + t \ytd}{\atv\dtz-\btx\cty=1}.
\end{align*}
Consider the following conditions:
\begin{align}
\tag{C1}
-\rtb s+\rtb t&=(t-s)\rtb =0\\
\tag{C2}
\qta s - \qta t - s \ytd + t \ytd&=(s-t)(\qta-\ytd)= 0\\
\tag{C3}
s \xtc - t \xtc&=(s-t)\xtc =0.
\end{align}
It is easy to see that 
\begin{align*}
\mbox{(C1) and (C2) and (C3) are satisfied}
&\ \iff\ \ 
s=t\mbox{ or }\rtb=\xtc=\qta-\ytd=0\\
&\ \iff\ \ 
\mbox{$B$ or $A$ is scalar.}
\end{align*}

Since $A$ and $B$ are non-scalar,
at least one condition is not satisfied.
If (C1) is not satisfied, then $-\rtb s+\rtb t\ne 0$ and setting $\btx=0$ we get
\begin{align*}
T \supseteq
\Set{\atv \cty (- \rtb s + \rtb t) + 
\qta s + t \ytd}{\atv \dtz=1} = \MZ_p.
\end{align*}
If (C3) is not satisfied, then $s\xtc-t\xtc\ne 0$ and setting $\cty=0$ we get
\begin{align*}
T \supseteq
\Set{\btx \dtz (s \xtc - t \xtc) + 
\qta s + t \ytd}{\atv \dtz=1} = \MZ_p.
\end{align*}
If (C2) is not satisfied, then $\qta s - \qta t - s \ytd + t \ytd\ne 0$.
Notice that now we may assume that (C1) and (C3) are satisfied.
Hence,
\begin{align*}
T \supseteq
\Set{\btx \cty (\qta s - \qta t - s \ytd + t \ytd) + 
\qta s + t \ytd}{\atv\dtz-\btx\cty=1}=\MZ_p.
\end{align*}
Thus, $T=\MZ_p$ as claimed.
Obviously, in each case considered above
the entries  $\atv,\btx,\cty,\dtz\in\MZ_p$ satisfying
$\tr(AB^\ztZ)=k$ can be found in polynomial time.\qed
\end{proof}

\subsubsection{Type--II.}

Consider two matrices of type-II
\small
\begin{align*}
A=
\begin{pmatrix}
\qta & \rtb \\
1 & \qta \\
\end{pmatrix}
,\qquad
B=
\begin{pmatrix}
s & t \\
1 & s \\
\end{pmatrix}
\in \GL{2}{p},
\end{align*}
\normalsize
where $t$ and $\rtb$ are quadratic non-residues.

\begin{lemma}\label{pr:type2trace}
For matrices $A,B$ of type-II we have $T(A,B)= \MZ_p$.
Furthermore, given $A$ and $B$ as well as $k\in \MZ_p$ one can find a matrix $\ztZ\in\GL{2}{p}$ satisfying $\tr(AB^\ztZ)=k$ in polynomial time.
\end{lemma}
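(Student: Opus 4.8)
The plan is to make the conjugacy class of $B$ completely explicit and then read off the attainable values of $\tr(AM)$ as $M$ ranges over it. Since $t$ is a quadratic non-residue we have $t\neq 0$, so no scalar matrix has trace $2s$ and determinant $s^2-t$; combined with the fact (noted above) that a non-scalar matrix in $\GL{2}{p}$ is determined up to conjugacy by its trace and determinant, this shows that $\Set{B^Z}{Z\in\GL{2}{p}}$ is exactly the set of matrices $M=\begin{pmatrix} s+w & \beta\\ \gamma & s-w\end{pmatrix}$ with $w,\beta,\gamma\in\Z_p$ subject to the single determinant constraint $\beta\gamma=t-w^2$ (invertibility is automatic, as $\det(B)\neq 0$). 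A direct computation with $A=\begin{pmatrix} a & b\\ 1 & a\end{pmatrix}$ then gives $\tr(AM)=2as+\beta+b\gamma$, the diagonal contributions combining because the two diagonal entries of $M$ sum to $2s$. Hence $\tr(AM)=k$ forces $\beta=k-2as-b\gamma$, and substituting into $\beta\gamma=t-w^2$ reduces everything to the problem: find $\gamma\neq 0$ and $w$ with $w^2=b\gamma^2-(k-2as)\gamma+t$ (the value $\gamma=0$ is impossible, since it would force $w^2=t$).

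This last equation is exactly the content of \cref{le:type2function}. I would apply it with the two quadratic non-residues $t$ (from $B$) and $b$ (from $A$) and with its parameter ``$k$'' set to $k-2as$, obtaining $u\neq 0$ and $x\in\Z_p$ with $x^2=bu^2-(k-2as)u+t$. Setting $\gamma:=u$, $w:=x$, $\beta:=k-2as-bu$ and $\alpha:=s+x$ produces a matrix $M$ with $\tr(M)=2s$, $\det(M)=s^2-t$ and $\tr(AM)=k$; moreover $M$ is non-scalar, since otherwise $x=0$ and $\beta=0$, contradicting $\beta\gamma=t\neq 0$. Therefore $M$ is conjugate to $B$, so $M=B^Z$ for some $Z\in\GL{2}{p}$, and $k\in T(A,B)$. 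As $k\in\Z_p$ was arbitrary, $T(A,B)=\Z_p$.

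For the algorithmic statement, \cref{le:type2function} yields the pair $(u,x)$, and hence the matrix $M$, in random polynomial time, and since $M$ and $B$ are conjugate, \cref{prop:typefinding} computes a conjugating $Z$ with $B^Z=M$ in random polynomial time; for $p=2$ the group $\GL{2}{p}$ has bounded size, so there is nothing to do. I expect the only subtlety to be bookkeeping~-- lining up the quadratic $w^2=b\gamma^2-(k-2as)\gamma+t$ with the normalized equation of \cref{le:type2function}, and checking that the degenerate cases $\gamma=0$ and ``$M$ scalar'' are genuinely excluded because $t$ is a non-residue. All of the real number-theoretic difficulty (producing a value that turns a given quadratic into a square modulo $p$) has already been packaged into \cref{le:type2function}, so beyond this there is no obstacle.
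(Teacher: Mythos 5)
Your proof is correct, and it takes a slightly different (and, if anything, cleaner) route to the same key equation. The paper parametrizes the conjugating matrix $Z=\begin{pmatrix} v & x \\ y & z \end{pmatrix}$, computes $\tr(AB^Z)=\tfrac{1}{\delta}(v^2b - x^2 - y^2bt + z^2t) + 2as$, and then specializes $y=0$ and rescales $x$ to match \cref{le:type2function}. You instead parametrize the conjugate $M=B^Z$ itself as $\begin{pmatrix} s+w & \beta \\ \gamma & s-w \end{pmatrix}$ subject only to the determinant constraint $\beta\gamma=t-w^2$ (the observation that $t\neq 0$ means no scalar has this trace and determinant is exactly what makes this parametrization exhaust the conjugacy class), read off $\tr(AM)=2as+\beta+b\gamma$, eliminate $\beta$, and land on $w^2 = b\gamma^2-(k-2as)\gamma+t$, which after dividing by $\gamma$ is precisely the equation of \cref{le:type2function}. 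So both proofs funnel into the same number-theoretic lemma; yours avoids the bookkeeping with $\delta$ and the choice $y=0$, at the small cost of a final appeal to \cref{prop:typefinding} to extract $Z$ from $M$, whereas the paper's computation produces $Z$ explicitly as $\begin{pmatrix} uz & zx \\ 0 & z \end{pmatrix}$. Since \cref{le:type2function} is already only a random polynomial-time procedure, this extra randomized step does not change the complexity claim. Two tiny remarks: once you have set $\gamma=u\neq 0$ from the lemma, the observation ``$\gamma=0$ is impossible'' is moot, and $M$ is automatically non-scalar simply because its lower-left entry $\gamma=u$ is non-zero (the argument you gave via $\beta\gamma$ is slightly garbled, since the constraint is $\beta\gamma=t-w^2$, not $\beta\gamma=t$, though the conclusion still holds). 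Also, the case $p=2$ you mention does not actually arise: type-II matrices require a quadratic non-residue, which forces $p$ odd.
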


\begin{proof}
Let $\delta=\atv\dtz-\btx\cty$ and notice that
\begin{align*}
AB^\ztZ=
\begin{pmatrix}
\qta& \rtb \\
1& \qta \\
\end{pmatrix}
\cdot
\frac{1}{\delta}
\begin{pmatrix}
\dtz& -\btx \\
-\cty& \atv \\
\end{pmatrix}
\begin{pmatrix}
s& t \\
1& s \\
\end{pmatrix}
\begin{pmatrix}
\atv& \btx \\
\cty& \dtz \\
\end{pmatrix}
\end{align*}
which trace is $\tfrac{1}{\delta}(\atv^2 \rtb - \btx^2 - \cty^2 \rtb t + \dtz^2 t) + 2 \qta s.$
Therefore,
\begin{align*}
T(A,B)&=\Set{\tfrac{1}{\delta}(\atv^2 \rtb - \btx^2 - \cty^2 \rtb t + \dtz^2 t) + 2 \qta s}{\delta\ne 0, \btx \in \Z_p}\\
&\supseteq
\Set{\tfrac{1}{\atv \dtz}(\atv^2 \rtb - \btx^2 + \dtz^2 t) + 2 \qta s}{\atv,\dtz\ne 0,\btx \in \Z_p}
\tag{set $\cty=0$}\\
&=
\Set{\tfrac{\atv}{\dtz} \rtb + \tfrac{\dtz}{\atv} t - \tfrac{\dtz}{\atv}\btx^2 + 2 \qta s}{\atv,\dtz\ne 0,\btx \in \Z_p}&&\tag{replace $\btx$ by $\dtz \btx$}
\end{align*} 
Thus, by \cref{le:type2function}, $T(A,B)=\Z_p$.\qed
\end{proof}

\subsubsection{Type III.}

Consider matrices
\small
\begin{align*}
A=
\begin{pmatrix}
\qta& \rtb \\
\xtc& \ytd \\
\end{pmatrix}
\mbox{ and }
B=
\begin{pmatrix}
s& 1 \\
0& s \\
\end{pmatrix} \in \GL{2}{p}. 
\end{align*}
\normalsize

\begin{lemma}\label{pr:type3trace}
Suppose that $A$ is not scalar.  The following hold:

\begin{enumerate}[(a)] 
\item
$\MZ_p \setminus \{s(\qta+\ytd)\} \subseteq T(A,B)$.
\item
It can be decided in polynomial time whether
$s(\qta+\ytd) \in T(A,B)$.
\item Given $A$ and $B$ as well as $k\in \MZ_p$, a matrix $\ztZ\in\GL{2}{p}$ with $\tr(AB^\ztZ)=k$
 can be found in random polynomial time.
\end{enumerate}
\end{lemma}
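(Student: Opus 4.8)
The plan is to use that a type-III matrix $B = \left(\begin{smallmatrix} s & 1 \\ 0 & s\end{smallmatrix}\right)$ equals $sI + N$ with $N = \left(\begin{smallmatrix} 0 & 1 \\ 0 & 0\end{smallmatrix}\right)$, so that $B^{\ztZ} = sI + N^{\ztZ}$, where $N^{\ztZ}$ runs over the conjugacy class of $N$ -- that is, over the set of all nonzero $2$-by-$2$ nilpotent matrices, namely $M = \left(\begin{smallmatrix} e & f \\ g & -e \end{smallmatrix}\right)$ with $e^2 + fg = 0$ and $(e,f,g) \neq (0,0,0)$. Writing $A = \left(\begin{smallmatrix} \qta & \rtb \\ \xtc & \ytd\end{smallmatrix}\right)$, a direct computation gives $\tr(AB^{\ztZ}) = s\,\tr(A) + \tr(AM) = s(\qta+\ytd) + (\qta-\ytd)e + \xtc f + \rtb g$. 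Hence, with $k_0 := s(\qta+\ytd)$,
\[
T(A,B) = k_0 + \Set{(\qta-\ytd)e + \xtc f + \rtb g}{e^2 + fg = 0,\ (e,f,g)\neq 0},
\]
and everything reduces to which values the linear form $\ell(e,f,g) := (\qta-\ytd)e + \xtc f + \rtb g$ takes on the punctured cone $e^2 + fg = 0$.

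For part (a), given $k \neq k_0$ I would exhibit an explicit nilpotent triple together with a conjugator realizing it: if $\xtc \neq 0$, take $(e,f,g) = (0, (k-k_0)/\xtc, 0)$, realized by $\ztZ = \left(\begin{smallmatrix} 1 & 0 \\ 0 & (k-k_0)/\xtc\end{smallmatrix}\right)$; if $\xtc = 0 \neq \rtb$, take $(e,f,g) = (0,0,(k-k_0)/\rtb)$, realized by $\ztZ = \left(\begin{smallmatrix} 0 & 1 \\ (k-k_0)/\rtb & 0\end{smallmatrix}\right)$; and if $\rtb = \xtc = 0$ (so $\qta \neq \ytd$, as $A$ is non-scalar) take $(e,f,g) = (e, 1, -e^2)$ with $e = (k-k_0)/(\qta-\ytd)$, realized by $\ztZ = \left(\begin{smallmatrix} 1 & 0 \\ e & 1\end{smallmatrix}\right)$. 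In each case the triple is a nonzero point of the cone, $\ell$ evaluates to $k-k_0$, and the conjugator is obtained with a bounded number of field operations; note that in the last case even $k = k_0$ is attained.

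For part (b), I claim that $k_0 \in T(A,B)$ if and only if $\xi_A := (\tr A)^2 - 4\det(A)$ is a square or zero modulo $p$ -- equivalently, $A$ is not of type-II. If $\xtc = 0$, then $\ell(0,1,0) = \xtc = 0$ and $\xi_A = (\qta-\ytd)^2$, so both sides hold. If $\xtc \neq 0$, then $\ell(e,f,g) = 0$ lets one solve $f = -((\qta-\ytd)e + \rtb g)/\xtc$, and substituting into $e^2 = -fg$ gives $\xtc e^2 - (\qta-\ytd)eg - \rtb g^2 = 0$; here $g = 0$ forces $e = 0$ and then $f = 0$, so a nonzero solution exists iff, after rescaling $g = 1$, the quadratic $\xtc e^2 - (\qta-\ytd)e - \rtb$ has a root in $\MZ_p$, i.e.\ iff its discriminant $(\qta-\ytd)^2 + 4\rtb\xtc = \xi_A$ is a square or zero. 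Whether $\xi_A$ is a square is decidable in polynomial time by \cref{lem:computeJacobi} (alternatively, compute the type of $A$ via \cref{prop:typefinding}).

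For part (c): if $k \neq k_0$, part (a) already supplies $\ztZ$ deterministically. If $k = k_0$, first run the test of part (b); if it fails, there is no conjugator. If it succeeds and $\xtc = 0$, then $\ztZ = I$ works. If it succeeds and $\xtc \neq 0$, compute a root $e$ of $\xtc e^2 - (\qta-\ytd)e - \rtb$ -- the only step requiring randomness, via the square-root algorithm of \cref{thm:tonelli} (and deterministic when $\xi_A = 0$) -- set $M = \left(\begin{smallmatrix} e & -e^2 \\ 1 & -e\end{smallmatrix}\right)$, and recover $\ztZ$ with $N^{\ztZ} = M$ by the linear-algebra routine in the last part of \cref{prop:typefinding} (both $N$ and $M$ are rank-one nilpotent, so this is routine). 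The main thing to watch is the case distinction on whether $\rtb$ or $\xtc$ vanishes, so that $\ell$ does not degenerate and the constraint $(e,f,g) \neq 0$ is respected throughout; once the parametrization $B^{\ztZ} = sI + (\text{nonzero nilpotent})$ is in place, the rest is bookkeeping.
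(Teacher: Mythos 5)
Your proof is correct and arrives at the same quadratic condition as the paper, but via a noticeably cleaner bookkeeping device: you decompose $B = sI + N$ with $N$ the nilpotent part and use that $B^{\ztZ}$ ranges over $sI + M$ with $M$ running over the nonzero nilpotents $\left(\begin{smallmatrix}e & f\\ g & -e\end{smallmatrix}\right)$, $e^2+fg=0$, so that $\tr(AB^{\ztZ}) - s(\qta+\ytd) = (\qta-\ytd)e + \xtc f + \rtb g$ is a linear form on the punctured cone. The paper instead expands $AB^{\ztZ}$ directly in the four entries of $\ztZ = \left(\begin{smallmatrix}\atv & \btx\\\cty&\dtz\end{smallmatrix}\right)$, obtaining $\tfrac{1}{\delta}\bigl(\cty^2(-\rtb) + \cty\dtz(\qta-\ytd) + \dtz^2\xtc\bigr) + s(\qta+\ytd)$, and then argues by varying $\delta$ freely: the two parametrizations are related by $e = \cty\dtz/\delta$, $f = \dtz^2/\delta$, $g = -\cty^2/\delta$, and both reduce part (b) to the same quadratic with discriminant $(\qta-\ytd)^2 + 4\rtb\xtc = (\tr A)^2 - 4\det A$. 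What your route buys is a more conceptual reading~-- you make explicit that $k_0 = s(\qta+\ytd) \in T(A,B)$ iff $A$ is not of type-II~-- plus fully explicit, deterministic conjugators in part (a), and a cleaner case split. What the paper's route buys is that the free $\delta$ factor makes part (a) an almost one-line scaling argument, and the same raw trace formula is re-used verbatim in the subsequent type-III/type-III lemmas (Lemmas 26 and 27). Your appeal to the last part of Proposition 19 for solving $N^{\ztZ} = M$ is slightly off-label since $N, M \notin \GL{2}{p}$, but this is harmless~-- either solve the linear system $N\ztZ = \ztZ M$ directly, or equivalently apply Proposition 19 to the conjugate invertible matrices $B$ and $sI + M$.
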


\begin{proof}
Because $\atv\dtz-\btx\cty=\delta\ne 0$, we have
\begin{align*}
AB^\ztZ&=
\begin{pmatrix}
\qta& \rtb \\
\xtc& \ytd \\
\end{pmatrix}
\cdot
\frac{1}{\delta}
\begin{pmatrix}
\dtz& -\btx \\
-\cty& \atv \\
\end{pmatrix}
\begin{pmatrix}
s& 1 \\
0& s \\
\end{pmatrix}
\begin{pmatrix}
\atv& \btx \\
\cty& \dtz \\
\end{pmatrix}\\[1mm]
&=
\frac{1}{\delta}
\begin{pmatrix}\small
\cty (s (\atv \rtb - \btx \qta) - \cty \rtb + \dtz \qta) + \atv s (\dtz \qta - \cty \rtb) &~ \dtz (s (\atv \rtb - \btx \qta) - \cty \rtb + \dtz \qta) + \btx s (\dtz \qta - \cty \rtb) \\
\cty (s (\atv \ytd - \btx \xtc) - \cty \ytd + \dtz \xtc) + \atv s (\dtz \xtc - \cty \ytd) &~ \dtz (s (\atv \ytd - \btx \xtc) - \cty \ytd + \dtz \xtc) + \btx s (\dtz \xtc - \cty \ytd)\\
\end{pmatrix}\\[1mm]
\shortintertext{which trace is}
\tfrac{1}{\delta}&\left(\cty (s (\atv \rtb - \btx \qta) - \cty \rtb + \dtz \qta) + \dtz (s (\atv \ytd - \btx \xtc) - \cty \ytd + \dtz \xtc) + \atv s (\dtz \qta - \cty \rtb) + \btx s (\dtz \xtc - \cty \ytd)\right)\\
&=
\tfrac{1}{\delta} \left((\atv \dtz - \btx \cty) \qta s + (\atv \dtz   - \btx \cty) s \ytd + \cty^2 (-\rtb) + \cty \dtz (\qta -  \ytd) + \dtz^2 \xtc\right)\\
&=\tfrac{1}{\delta} \left(\delta \qta s + \delta s \ytd+ \cty^2 (-\rtb) +\cty \dtz (\qta -  \ytd) + \dtz^2 \xtc \right)\\
&=\tfrac{1}{\delta} \left(\cty^2 (-\rtb) +\cty \dtz (\qta -  \ytd) + \dtz^2 \xtc \right) + \qta s + s \ytd.
\end{align*}
Therefore,
\begin{align*}
T(A,B)
&=
\Set{\tfrac{1}{\delta} \left(\cty^2 (-\rtb) +\cty \dtz (\qta -  \ytd) + \dtz^2 \xtc \right) + \qta s + s \ytd}{ \delta=\atv\dtz-\btx\cty \neq 0}\\
&=
\Set{\tfrac{1}{\delta} \left(\cty^2 (-\rtb) +\cty \dtz (\qta -  \ytd) + \dtz^2 \xtc \right) + \qta s + s \ytd}{\mbox{$\cty \neq 0$ or $\dtz\neq 0$ }}.
\end{align*}
If $A$ is not scalar, then $\cty^2 (-\rtb) +\cty \dtz (\qta -  \ytd) + \dtz^2 \xtc$ 
assumes a nontrivial value for some (not both trivial) $\cty$ and $\dtz$.
Varying $\atv$ and $\btx$ we can get any nontrivial value for
$\delta$ without changing $\cty^2 (-\rtb) +\cty \dtz (\qta -  \ytd) + \dtz^2 \xtc$
and attain all values in $\MZ_p\setminus\{s(\qta+\ytd)\}$.
Therefore, (a) holds.
Finally, 
\begin{align*}
s(\qta+\ytd) \in  T(A,B)
&\ \Leftrightarrow\ 
\cty^2 (-\rtb) +\cty \dtz (\qta -  \ytd) + \dtz^2 \xtc=0
\mbox{ has a solution with $\cty\ne 0$ or $\dtz\ne 0$}\\
&\ \Leftrightarrow\ 
(\qta-\ytd)^2+4\rtb \xtc 
\mbox{ is a quadratic residue of }p,
\end{align*}
which can be checked in polynomial time (\cref{lem:computeJacobi}).
To see the last equivalence we can multiply by $y^{-1}$ or $z^{-1}$ to obtain a polynomial that has $(\qta-\ytd)^2+4\rtb \xtc$ as discriminant.   Thus, (b) holds. 

In order to compute the matrix $Z$, we distinguish the same two cases as above: if $k\neq s(a+d)$, we can find some $y,z$ such that $\cty^2 (-\rtb) +\cty \dtz (\qta -  \ytd) + \dtz^2 \xtc \neq 0$ with a constant number of tries, set $v=1$, and then solve a linear equation.
If $k= s(a+d)$, we need to solve the quadratic equation as above, which can be done in random polynomial time by \cref{thm:tonelli} (and we can choose arbitrary values for $x$ and $v$).\qed 
\end{proof}
Now, consider a special case when $A$ and $B$ are of type--III. In this case,
\begin{align*}\label{eq:type3type3}
T(A,B) =
\Set{-\tfrac{\cty^2}{\delta} + 2\qta s}{\cty\in\MZ_p,\ \delta\ne 0} = 
\MZ_p.
\end{align*}
So, if the matrix on the right hand side is of type-1 or type-2, then the equation $AB^Z=C^Y$ always has a solution because the conjugacy class is uniquely determined by $\det(C)$ and $\tr(C)$. Thus it remains to consider $C$ of type-3.

\begin{lemma}\label{pr:eq-3type3-b}
Suppose that $p\ge 3$, $\qta\ne 0$, $s\ne 0$. 
Then the equation 
\begin{equation}\label{eq:eq-3type3-b}
\small
\begin{pmatrix}
\qta& 1 \\
0& \qta \\
\end{pmatrix}
\cdot
\begin{pmatrix}
s& 1 \\
0& s \\
\end{pmatrix}
^{\ztZ_2}
=
\begin{pmatrix}
-\qta s& 1 \\
0& -\qta s \\
\end{pmatrix}
^{\ztZ_3}
\end{equation}
\normalsize
has a solution for $\ztZ_2,\ztZ_3 \in \GL{2}{p}$.
Furthermore, $\ztZ_2$ and $\ztZ_3$ satisfying \eqref{eq:eq-3type3-b}
can be found in random polynomial time.
\end{lemma}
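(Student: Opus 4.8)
The plan is to reduce \eqref{eq:eq-3type3-b} to the trace formula obtained just before this lemma together with the fact that a non-scalar $2$-by-$2$ matrix is determined up to conjugacy by its trace and determinant (\cref{prop:typefinding}). Denote by $A$, $B$, $C$ the three matrices appearing in \eqref{eq:eq-3type3-b}, so that $A$ has eigenvalue $\qta$, $B$ has eigenvalue $s$, and $C$ has eigenvalue $-\qta s$; since $\qta\neq 0\neq s$, all three lie in $\GL{2}{p}$ and are of type-III. The equation \eqref{eq:eq-3type3-b} is solvable if and only if $AB^{\ztZ_2}$ is conjugate to $C$ for some $\ztZ_2$. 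Here $\det(AB^{\ztZ_2})=\det(A)\det(B)=(\qta s)^2=\det(C)$ holds automatically, so it suffices to exhibit $\ztZ_2$ with $\tr(AB^{\ztZ_2})=-2\qta s=\tr(C)$ for which $AB^{\ztZ_2}$ is non-scalar.

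For the trace I would reuse the computation preceding the lemma: writing $\ztZ_2=\begin{pmatrix}\atv&\btx\\\cty&\dtz\end{pmatrix}$ with $\delta=\atv\dtz-\btx\cty\neq 0$, one has $\tr(AB^{\ztZ_2})=2\qta s-\cty^2/\delta$, so $T(A,B)=\Z_p\ni -2\qta s$. Concretely, since $p\ge 3$ and $\qta,s\neq 0$ the element $4\qta s$ is invertible, and $\ztZ_2=\begin{pmatrix}1&0\\1&(4\qta s)^{-1}\end{pmatrix}$ gives $\delta=(4\qta s)^{-1}$ and hence $\tr(AB^{\ztZ_2})=2\qta s-4\qta s=-2\qta s$. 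To see that $AB^{\ztZ_2}$ is non-scalar, note that any $\ztZ_2$ realizing the trace $-2\qta s\neq 2\qta s$ forces $\cty\neq 0$, and the lower-left entry of $AB^{\ztZ_2}$ equals $-\qta\cty^2/\delta\neq 0$ (again using $\qta\neq 0$); hence $AB^{\ztZ_2}$ is not scalar, and since its characteristic polynomial is $\lambda^2+2\qta s\,\lambda+(\qta s)^2=(\lambda+\qta s)^2$, it is of type-III. Therefore $AB^{\ztZ_2}$ and $C$, being non-scalar with equal trace and determinant, are conjugate, and a conjugating matrix $\ztZ_3$ with $AB^{\ztZ_2}=C^{\ztZ_3}$ can be computed in randomized polynomial time by \cref{prop:typefinding}; the matrix $\ztZ_2$ is explicit.

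The only point needing care is that $C$ shares its trace $-2\qta s$ and determinant $(\qta s)^2$ with the scalar matrix $-\qta s\cdot I$, so one cannot conclude solvability purely from trace/determinant bookkeeping: one must check that the particular product $AB^{\ztZ_2}$ we build does not happen to be this scalar matrix, which is exactly what the lower-left-entry computation provides. Apart from that, the argument is a routine assembly of the trace formula stated before the lemma and of \cref{prop:typefinding}.
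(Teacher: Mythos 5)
Your proof is correct and follows essentially the same route as the paper's: both reduce to showing $\tr(AB^{Z_2})=-2\qta s$ is attainable (forcing $\cty^2=4\qta s\delta$), verify that the resulting product is non-scalar via its lower-left entry $-\qta\cty^2/\delta\neq 0$, and then invoke \cref{prop:typefinding} to produce $Z_3$; you also correctly flag the subtlety that the scalar $-\qta s\cdot I$ shares the same trace and determinant. The only cosmetic difference is that you pin down an explicit $Z_2=\begin{pmatrix}1&0\\1&(4\qta s)^{-1}\end{pmatrix}$ whereas the paper leaves $\cty\neq 0$ arbitrary and solves the constraint for the remaining entries.
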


\begin{proof}
Simplify the left-hand side of the equation to get 
\small
\begin{align} \label{CalcTwoTypIII}
\begin{pmatrix}
\qta& 1 \\
0& \qta \\
\end{pmatrix}
\cdot
\frac{1}{\delta}
\begin{pmatrix}
\dtz& -\btx \\
-\cty& \atv \\
\end{pmatrix}
\begin{pmatrix}
s& 1 \\
0& s \\
\end{pmatrix}
\begin{pmatrix}
\atv& \btx \\
\cty& \dtz \\
\end{pmatrix}
=
\frac{1}{\delta}
\begin{pmatrix}
-\cty^2 + \cty \dtz \qta + \delta \qta s &~ -\cty \dtz + \dtz^2 \qta + \delta s\\
-\cty^2 \qta & \delta \qta s - \cty \dtz \qta\\
\end{pmatrix}
\end{align}
\normalsize
where $\delta=\atv\dtz-\btx\cty$.
The trace of the obtained matrix is $2 \qta s-\tfrac{\cty^2}{\delta}$. So for (\ref{eq:eq-3type3-b}) to hold we need that  $
\cty^2 = 4\qta s\delta.$
Fix any $\cty\neq 0$ and choose $\atv,\btx,\dtz$ satisfying
$\cty^2 = 4\qta s \delta$.
Observe that the right side in (\ref{CalcTwoTypIII}) is not scalar because its lower-left entry is $-\tfrac{1}{\delta}\cty^2\qta  = -4\qta ^2s\neq 0$. By the above choice of $\cty$, its determinant is $ \qta^2s^2$ and its trace is  $-2\qta s$.
So the eigenvalues are $\lambda_1=\lambda_2=-\qta s$ and thus the right hand side in (\ref{CalcTwoTypIII}) is of type III. The choice of $\atv,\btx,\cty,\dtz$ fix a matrix $\ztZ_2$.
Hence, $\ztZ_3$ can be found in random polynomial time by \cref{prop:typefinding}.
\qed\end{proof}

\begin{lemma}\label{pr:eq-3type3-a}
Let $p \geq 3$. Suppose that  $\qta\ne 0$, $s\ne 0$.
Then the equation 
\small
\begin{equation}\label{eq:eq-3type3-a}
\begin{pmatrix}
\qta& 1 \\
0& \qta \\
\end{pmatrix}
\cdot
\begin{pmatrix}
s& 1 \\
0& s \\
\end{pmatrix}
^{Z_2}
=\begin{pmatrix}
\qta s& 1 \\
0& \qta s \\
\end{pmatrix}
^{Z_3}
\end{equation}
\normalsize
has a solution for $Z_2,Z_3 \in \GL{2}{p}$.
Furthermore, $\ztZ_2$ and $\ztZ_3$ satisfying \eqref{eq:eq-3type3-a}
can be found in random polynomial time.
\end{lemma}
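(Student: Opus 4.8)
The plan is to follow the proof of \cref{pr:eq-3type3-b} essentially verbatim, changing only the value we fix for the lower-left entry of $Z_2$. The left-hand side of \eqref{eq:eq-3type3-a} is literally the same product as the left-hand side of \eqref{eq:eq-3type3-b}, so by the computation \eqref{CalcTwoTypIII} --- with $Z_2 = \begin{pmatrix} v & x \\ y & z \end{pmatrix}$ and $\delta = vz - xy \neq 0$ --- it has trace $2as - y^2/\delta$ and determinant $\det\begin{pmatrix} a & 1 \\ 0 & a\end{pmatrix}\det\begin{pmatrix} s & 1 \\ 0 & s\end{pmatrix} = (as)^2$.

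Since the matrix on the right-hand side of \eqref{eq:eq-3type3-a} has trace $2as$, any solution must satisfy $y = 0$, so I would simply impose $y = 0$ (and, for definiteness, $x = 0$). Then $\delta = vz$ and, by \eqref{CalcTwoTypIII}, the left-hand side reduces to $\begin{pmatrix} as & az/v + s \\ 0 & as \end{pmatrix}$ with $v, z \in \Z_p^\times$; this is upper triangular with constant diagonal $as$ and determinant $(as)^2$, so $as$ is its only eigenvalue, and it is non-scalar precisely when $az/v + s \neq 0$. As $z/v$ ranges over $\Z_p^\times$ and $a \neq 0$, the value $az/v$ takes all nonzero residues, and since $p \geq 3$ there is more than one admissible $z/v$; hence we may choose $v, z \in \Z_p^\times$ with $az/v \neq -s$. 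For such a choice the left-hand side is a non-scalar matrix with unique eigenvalue $as$, i.e.\ of type-III, and therefore conjugate to $\begin{pmatrix} as & 1 \\ 0 & as \end{pmatrix}$.

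What remains is to exhibit $Z_3$ with $\begin{pmatrix} a & 1 \\ 0 & a\end{pmatrix}\begin{pmatrix} s & 1 \\ 0 & s\end{pmatrix}^{Z_2} = \begin{pmatrix} as & 1 \\ 0 & as \end{pmatrix}^{Z_3}$, which is exactly the final assertion of \cref{prop:typefinding} applied to the two conjugate matrices just produced, and yields $Z_3$ in randomized polynomial time. I do not expect a genuine obstacle here: the argument is a routine variant of \cref{pr:eq-3type3-b}, the only step needing a one-line check is that $az/v + s \neq 0$ is attainable --- which is where the hypotheses $a \neq 0$ and $p \geq 3$ are used --- and the randomization enters solely through the (generalized) eigenvector computation inside \cref{prop:typefinding}.
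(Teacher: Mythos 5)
Your proof is correct and follows essentially the same route as the paper's: read off the trace constraint from \eqref{CalcTwoTypIII} to force $y=0$, compute that the left-hand side then becomes $\begin{pmatrix} as & az/v+s \\ 0 & as \end{pmatrix}$, pick $v,z$ so that the off-diagonal entry is nonzero (using $a\neq 0$, $s\neq 0$, $p\geq 3$), and invoke \cref{prop:typefinding} for $Z_3$. The only (harmless) deviation is that you additionally set $x=0$, which the paper leaves free; the resulting matrix is identical.
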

\begin{proof}
The trace of the right-hand side matrix is $2\qta s$. Hence,
\begin{align*}
    -\tfrac{\cty^2}{\delta}+2\qta s=2\qta s. 
\end{align*}
Therefore, $\cty=0$ and on the left-hand side we get
\small
\begin{align*}
\begin{pmatrix}
\qta & 1 \\
0& \qta  \\
\end{pmatrix}
\frac{1}{\delta}\cdot
\begin{pmatrix}
\dtz& -\btx \\
0& \atv \\
\end{pmatrix}
\begin{pmatrix}
s& 1 \\
0& s \\
\end{pmatrix}
\begin{pmatrix}
\atv& \btx \\
0& \dtz \\
\end{pmatrix}
=
\frac{1}{\atv \dtz}\begin{pmatrix}
\atv \dtz \qta s&~ \dtz^2 \qta + \atv \dtz s\\
0& \atv \dtz \qta s\\
\end{pmatrix}
=
\begin{pmatrix}
\qta s&~~ \frac{\dtz}{\atv}\cdot \qta +s\\
0& \qta s\\
\end{pmatrix}
\end{align*}
\normalsize
Hence, the equation \eqref{eq:eq-3type3-a} 
has a solution if and only if
$
 \frac{\dtz}{\atv}\cdot \qta +s \ne 0
$
for some $\dtz\ne 0$ (otherwise, the right hand side is scalar and thus not of type-III). We can choose $\atv \neq 0$ arbitrarily and then $\dtz$ such that $\dtz \ne -\tfrac{s}{\atv \qta}$. 
Such $\dtz$ can be found in polynomial time.
This defines a matrix $\ztZ_2$. Hence, $\ztZ_3$ can be found in random polynomial time by \cref{prop:typefinding}.\qed
\end{proof}

\newcommand{\ctC}{C}
\begin{theorem}\label{thm: GLinP}
$\DPSE{(\GL{2}{p})_{p \in \prm}} \in  \P$. 
Furthermore, a solution (if it exists) can be found in random 
polynomial time.
\end{theorem}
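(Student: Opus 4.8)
The plan is to reduce an arbitrary spherical equation over $\GL{2}{p}$ to a constant number of conjugacy and quadratic-residue tests, all of which are supplied by the lemmas above, so that the decision is deterministic polynomial time and a witness can then be reconstructed in random polynomial time. For the finitely many primes $p<5$ the group $\GL{2}{p}$ is a fixed finite group and \cref{cor: DPforFixedG} applies, so I assume $p\ge 5$. Given an equation $\prod_{i=1}^k z_i^{-1}c_iz_i=1$, I first take determinants: $\prod_{i=1}^k\det(c_i)=1$ is necessary, and it is checked in polynomial time. Scalar matrices are central, so by \cref{lem:reorder} I may move all scalar constants to the front; writing their product as $\lambda I$ and calling the remaining, non-scalar constants $\tilde c_1,\dots,\tilde c_m$, the equation becomes equivalent to
\[
\tilde c_1^{z_1}\cdots\tilde c_m^{z_m}=\mu I,\qquad \mu:=\lambda^{-1},
\]
and the determinant condition now reads $\mu^2=\prod_{j=1}^m\det(\tilde c_j)$. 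By \cref{prop:typefinding} the type of every matrix is computable in polynomial time, so it remains to decide this reduced equation.

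The cases $m\le 2$ are immediate. For $m=0$ the equation holds iff $\mu=1$; for $m=1$ it is unsatisfiable, since a non-scalar matrix is never conjugate to a scalar one; for $m=2$, conjugating by $z_1$ turns it into the statement ``$\tilde c_2$ is conjugate to $\mu\tilde c_1^{-1}$'', which by \cref{prop:typefinding} is decided in polynomial time by comparing traces, determinants and types.

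The heart of the proof is the case $m\ge 3$. The key claim is that for $j\ge 2$ the conjugation-closed set $P_j:=\{\tilde c_1^{z_1}\cdots\tilde c_j^{z_j}\mid z_i\in\GL{2}{p}\}$ --- a union of conjugacy classes, all of determinant $d_j:=\prod_{i=1}^j\det(\tilde c_i)$ --- contains \emph{every} non-scalar conjugacy class of determinant $d_j$, with the single possible exception, when $j=2$ and one of $\tilde c_1,\tilde c_2$ is of type-II and the other of type-III, of the classes of one trace value, whose presence in $P_2$ is decided by \cref{pr:type3trace}(b). I would prove this by induction on $j$. The base case $j=2$ is read off from \cref{pr:type1trace,pr:type2trace,pr:type3trace}, together with the observation made just before \cref{pr:eq-3type3-b} that $T(A,B)=\MZ_p$ when $A$ and $B$ are both of type-III, and \cref{pr:eq-3type3-a,pr:eq-3type3-b}, which locate the type-III class of the ``boundary'' eigenvalue $\pm\sqrt{d_2}$ inside $P_2$ in that case (analogous direct computations handle the mixed type configurations). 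For the inductive step $j-1\to j$, I peel off the last factor: the class of a non-scalar $C$ of determinant $d_j$ lies in $P_j$ iff the conjugation-closed set $\{C'(\tilde c_j^{-1})^{z}\mid C'\sim C,\ z\in\GL{2}{p}\}$ meets $P_{j-1}$; both are unions of conjugacy classes of the same determinant $d_{j-1}$, and by the trace lemmas both exhaust $\MZ_p$ as a trace set up to a bounded number of values, so for $p\ge 5$ they contain the (unique) non-scalar class of some common trace $t$ with $t^2\ne 4d_{j-1}$, which yields the desired element of $P_j$. Granting the claim, the reduced equation is settled by one more peeling: $\mu\tilde c_m^{-1}$ is non-scalar of determinant $d_{m-1}$, so for $m\ge 4$ the equation is \emph{always} satisfiable once $\prod_i\det(c_i)=1$; and for $m=3$ it is satisfiable iff for at least one $i\in\{1,2,3\}$, writing $\{j,\ell\}=\{1,2,3\}\setminus\{i\}$, the conjugacy class of $\mu\tilde c_i^{-1}$ lies in $\{\tilde c_j^{z}\tilde c_\ell^{z'}\}$ (using \cref{lem:reorder}), which \cref{pr:type3trace}(b) and \cref{prop:typefinding} decide. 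All these tests are deterministic polynomial time, whence $\DPSE{(\GL{2}{p})_{p\in\prm}}\in\P$.

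Finally, the whole procedure is constructive: once satisfiability is established, one works backwards through the reduction and recovers the conjugating matrices $z_i$ using \cref{prop:typefinding}, which invokes the Tonelli--Shanks algorithm (\cref{thm:tonelli}) for square roots, and \cref{le:type2function}, which relies on \cref{th:bivariate}; this is the only source of randomness, so a solution is found in random polynomial time. I expect the main obstacle to be the bookkeeping in the $m\ge 3$ analysis: the trace lemmas control only traces, and at the boundary values (those $t$ with $t^2=4d_2$) a scalar class and a type-III class of the same trace and determinant coexist, so one must verify, for each combination of the types of $\tilde c_1$, $\tilde c_2$ and the target, that the \emph{type-III} class rather than merely the scalar one actually occurs in $P_2$ --- which is exactly what \cref{pr:eq-3type3-a,pr:eq-3type3-b} are designed for.
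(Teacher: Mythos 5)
Your overall route is the same as the paper's: reduce to $p\ge 5$, strip scalars, check the determinant, classify the remaining constants into types I/II/III via \cref{prop:typefinding}, and then use the trace lemmas \cref{pr:type1trace,pr:type2trace,pr:type3trace} plus the all-type-III lemmas \cref{pr:eq-3type3-a,pr:eq-3type3-b} to settle the residual equation. Where you genuinely differ is the treatment of $m\ge 4$: you formulate and inductively prove that $P_j$ contains \emph{every} non-scalar conjugacy class of the correct determinant once $j\ge 3$, whereas the paper first merges constants down to three, and, in the one failure case (trace of the target equals the single trace possibly absent from $T$), ``unsplits'' one of the merged matrices $C_i=DE$ and re-conjugates one factor to shift the trace. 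Your $P_j$ claim is a cleaner conceptual packaging of the same fact the paper notes afterwards (``if $k\ge 4$ we always have a solution once $\det=1$''), and the counting argument you sketch (at most two boundary traces, at most one trace missing from each of the two trace sets, so $p\ge 5$ leaves a good common non-boundary trace) does go through.

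Two imprecisions are worth flagging. First, the stated exception in your base case is wrong: \cref{pr:type3trace} shows $T(A,B)$ can miss a trace whenever exactly one of $A,B$ is type-III and the other is \emph{any} non-scalar matrix, not only when the other is type-II; moreover, even when $T=\Z_p$, at the boundary traces $t^2=4d_2$ the set $P_2$ may contain only the scalar class and not the type-III one (you correctly flag this second point at the end, but the claim as stated mid-proof is too narrow). Second, in the $m=3$ case ``decided by \cref{pr:type3trace}(b) and \cref{prop:typefinding}'' is not enough when \emph{all three} $\tilde c_i$ are type-III, because then every target $\mu\tilde c_i^{-1}$ is again type-III and a trace-membership test cannot distinguish the scalar class from the wanted type-III class; this is precisely where \cref{pr:eq-3type3-a,pr:eq-3type3-b} are indispensable, and the paper's device is to use \cref{lem:reorder} to choose a non-type-III target whenever at least one $\tilde c_i$ is not of type-III, so that no ``mixed-type analogues'' of those two lemmas are actually needed (contrary to what your phrase ``analogous direct computations handle the mixed type configurations'' suggests). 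With these two points tightened, your argument matches the paper's in substance and gives a somewhat more structural proof of the $m\ge 4$ case.
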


\begin{proof}
If $p=2,3$,  the theorem follows from \cref{thm: finitegroup}. Thus, from now on we assume $p\geq 5$.
Let $\ctC_1,\dots, \ctC_k\in \GL{2}{p}$. If one of the matrices, say $\ctC_k$, is scalar, then
 $(\ctC_1,\dots, \ctC_k)\in \DPSE{\GL{2}{p}}$ if and only if $(\ctC_1,\dots, \ctC_{k-1}\ctC_k)\in \DPSE{\GL{2}{p}}$.
 Hence, we may assume that 
$\ctC_1,\dots,\ctC_k$ are non-scalar matrices.
Furthermore, we may assume that 
$\det(\ctC_1\cdots \ctC_k)=1$ (otherwise $(\ctC_1,\dots, \ctC_k)\notin \DPSE{\GL{2}{p}}$). Note that, since $\det(\ctC_1\cdots \ctC_k) = \det(\ctC_1)\cdots \det(\ctC_k)$, we can check this in polynomial time.

We consider the cases $k=1,2,3$ and $k \geq 4$.
If $k=1$, then $(\ctC_1) \in \DPSE{\GL{2}{p}}$ if and only if $\ctC_1=1$.
If $k=2$  the problem is equivalent
to the conjugacy of $\ctC_1$ and $\ctC_2^{-1}$, which is decidable 
in polynomial time for $\GL{2}{p}$ (see \cref{prop:typefinding}).

Let $k=3$ and assume that all matrices are of type-I or type-II. By \cref{prop:typefinding} we can check this in polynomial time. 
We write the equation as $\ctC_1\ctC_2^{\ztZ_2}=(\ctC_3^{-1})^{\ztZ_3}$. 
The conjugacy class of $\ctC_3^{-1}$ is uniquely defined
by $\det(\ctC_3^{-1})$ and $\tr(\ctC_3^{-1})$. Hence, there exists a solution if and only if
\begin{align*}
\tr(\ctC_3^{-1})\ \in\ \tr\left(\Set{\ctC_1 \ctC_2^{\ztZ_2}}{\ztZ_2\in\GL{2}{p}}\right).
\end{align*}

\begin{itemize}
\item 
If either  $\ctC_1$ or $\ctC_2$, say $\ctC_2$, is of type-I, then,
by \cref{pr:type1trace}, $T(\ctC_1,\ctC_2)=\MZ_p$ 
and the equation has a solution.
\item 
If both $\ctC_1$ and $\ctC_2$ are of type-II, then, by \cref{pr:type2trace}, $T(\ctC_1,\ctC_2)=\MZ_p$
and the equation has a solution.
\end{itemize}
If one or two, but not all three matrices are of type-III let $\ctC_3$ be the matrix not of type-III (due to \cref{lem:reorder}). Then we use \cref{pr:type3trace} to decide if there exists a solution. For this we only need to know the trace of the matrices. Note that the conjugacy class of $\ctC_3^{-1}$ is again uniquely defined
by $\det(\ctC_3^{-1})$ and $\tr(\ctC_3^{-1})$.   
If $\ctC_1,\ctC_2,\ctC_3$ are all of type-III, we are in one of the cases   in \cref{pr:eq-3type3-b} or \cref{pr:eq-3type3-a} (otherwise, the determinant and the eigenvalues do not fit). So, we have a solution.

So, to summarize, when $k=3$,
the equation may have no solution only if at least one but not more than two of the matrices are of type-III. Using \cref{prop:typefinding} we can determine the type of the matrices in polynomial time and thus decide in which case we are.

Now consider the case $k \geq 4$. We first subsequently multiply two matrices together until we are left with only three matrices $\ctC_1,\ctC_2,\ctC_3$.
If none or all of them are of type-III, we proceed as above to show that there is a solution.
If at least one but at most two of the matrices are of type-III, let $\ctC_3$ be the matrix not of type-III.
By \cref{pr:type3trace}, 
$T(\ctC_1,\ctC_2) \supseteq  \Z_p \setminus \{d\}$ for $d=\tr(\ctC_1)\cdot \tr(\ctC_2)/2$ if $\ctC_2$ is of type-III.
So we are left with the case that $\tr(\ctC_3)=d$. For some $i \in \{1,2,3\}$ there exist $D,E \in \GL{2}{p}$ such that $\ctC_i=DE$.
If $i=3$, by Propositions \ref{pr:type1trace}-\ref{pr:eq-3type3-b} and because $p \geq 5$ we can choose $U \in \GL{2}{p}$ such that $\tr(DE^U)\neq d$  and $(\tr(DE^U))^2 \neq 4 \det(DE^U)$  (and thus $DE^U$ not of type-III). 
If $i=1,2$, again by the above propositions we choose $U \in \GL{2}{p}$ such that $\tr(DE^U) \neq \tr(\ctC_i)$. By \cref{pr:type3trace}, $d \in T(DE^U,C_{3-i})$. 

In each case, by \cref{prop:typefinding} and Lemmas \ref{pr:type1trace}-\ref{pr:eq-3type3-b} it follows that we can find a solution in random polynomial time if one exists. Note that we need to calculate the normal form of the matrix only for calculating the solutions. For the decision problem it suffices to determine the type. 
\qed

\end{proof}

Notice that the proof also shows that if $k \geq 4$ we always have a solution if $\det(C_1)\cdots \det(C_k)=1$.

\begin{remark} \label{rem: CramerforET}  
If Cramer's conjecture is true \cite{Cramer36}, by \cref{rem: DpNPhard},  $\DPSE{(\ET{2}{p})_{p \in \prm}}$ is \NP-hard. Observe that $\DPSE{(\UT{2}{p})_{p \in \prm}}$ is in \P because $\UT{2}{p}$ is abelian and $\UT{2}{p} \leq \ET{2}{p} \leq \GL{2}{p}$.  In \cref{sec: grpZkC}  we show that   there are indeed sequences of groups 
 $\mathcal{G'}=(G'_n)_{n\in \N}$, $\cH=(H_n)_{n\in \N}$ and $\cG=(G_n)_{n\in \N}$ such that $G'_n\leq H_n \leq G_n$ for all $n \in \N$,  $\DPSE{\mathcal{G'}}$ and $\DPSE{\mathcal{G}}$ are in \P but $\DPSE{\mathcal{H}}$ is \NP-hard.
\end{remark}

\section{Matrix groups in higher dimensions}
\label{se:high-dim-matrices}

In this section we are again in the \emph{matrix group} input model.
We first prove some \NP-hardnes result for the \emph{subgroup} input model for matrix groups. Then we will show that for generalized Heisenberg groups (\ref{Heisenberg}) spherical equations are solvable in polynomial time even if the dimension $n$ and the prime field $\Z_p$ are both part of the input. After that we will show 
$\DPSE{(\UT{4}{p})_{p \in \prm}} \in \P$.

\subsection{The subgroup input model}

\begin{corollary}
\begin{enumerate}[(a)]
    \item \label{SubgrGLn} For every constant $q \in \N \setminus\{0,  1\}$, $\DPSub{(\GL{n}{q})_{n \in \N}}$ and $\DPSub{(\SL{n}{q})_{n \in \N}}$ are $\NP$-complete. 
    \item \label{SubgrGLq} For every constant $n \in \N \setminus\{0,  1\}$, the problems $\DPSub{(\GL{n}{q})_{q \in \N}}$, $\DPSub{(\SL{n}{q})_{q \in \N}}$ and $\DPSub{(\TL{n}{q})_{q \in \N}}$ are $\NP$-complete. 
\end{enumerate}
\end{corollary}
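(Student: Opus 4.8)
The plan is to get membership in \NP\ for free from \cref{prop:inNP} (which already covers $\DPSub{\cdot}$ for matrix groups) and to prove \NP-hardness by \LOGSPACE{} reductions from Diophantine problems that are already known to be \NP-hard, realising the relevant source group as a subgroup of the matrix group in question. The observation that makes all of these reductions work is that in the subgroup input model the conjugators $z_i$ must lie in $\gen{g_1,\dots,g_\ell}$; hence, if for a finite group $H$ we hand over a (say, two–element) generating set of a faithful matrix copy $\widetilde H$ of $H$ together with coefficients lying in $\widetilde H$, the resulting instance of $\DPSub{\cdot}$ is literally an instance of $\DPSE{H}$. Every embedding used below is trivially \LOGSPACE{} computable: one only has to write down a bounded number of matrices whose entries are $0$, $\pm 1$, or input numbers.

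For \cref{SubgrGLn} I would use permutation matrices. For every constant $q\ge 2$ the assignment of a permutation to its permutation matrix over $\Z_q$ embeds $S_n$ into $\GL{n}{q}$, and $A_n$ into $\SL{n}{q}$ (the permutation matrix of an even permutation has determinant $1$). Fixing two–element generating sets of $S_n$ and $A_n$, this turns $\DPSE{(S_n)_{n\in\N}}$ (\cref{thm: DPforSn}) into a reduction to $\DPSub{(\GL{n}{q})_{n\in\N}}$ and $\DPSE{(A_n)_{n\in\N}}$ (\cref{cor: SEAn}) into one to $\DPSub{(\SL{n}{q})_{n\in\N}}$: pass the permutation matrices of the two generators as $g_1,g_2$, and the permutation matrices of the coefficients (after rewriting an equation \eqref{AlternativeSE} in the form \eqref{DefSE} as explained there) as $c_1,\dots,c_k$. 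Conjugation inside the generated subgroup is exactly conjugation inside $S_n$ (resp.\ $A_n$), so the reduced instance is satisfiable iff the original one is.

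For \cref{SubgrGLq} and the groups $\GL{n}{q},\TL{n}{q}$ (with $n\ge 2$ constant and $q$ the input) I would reduce from $\DPSE{(D_m)_{m\in\N}}$ with $m$ in binary (\cref{thm: DnNPhard}): recall $D_m\cong\gen{R,S}$ with $R=\begin{pmatrix}1&1\\0&1\end{pmatrix}$ and $S=\begin{pmatrix}1&0\\0&-1\end{pmatrix}$, both upper triangular; padding with an $I_{n-2}$ block yields a faithful copy of $D_m$ inside $\TL{n}{m}\le\GL{n}{m}$, and one passes $R\oplus I_{n-2}$, $S\oplus I_{n-2}$ as generators and the coefficients of the $D_m$–instance (embedded the same way) as $c_1,\dots,c_k$. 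For $\SL{n}{q}$ with $n\ge 3$ the only change is that the reflection must have determinant $1$: replace $S\oplus I_{n-2}$ by $\operatorname{diag}(1,-1,-1,1,\dots,1)$, which still conjugates $R\oplus I_{n-2}$ to its inverse and squares to $I$, hence together with $R\oplus I_{n-2}$ again generates a faithful copy of $D_m$, now inside $\SL{n}{m}$.

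The remaining case, $\SL{2}{q}$, is the only genuinely delicate one and is the point I expect to be the main obstacle: $D_m$ does not embed in $\SL{2}{q}$ (its reflection would have to conjugate a unipotent to its own inverse, which forces the conjugator to have determinant $-1$), so a \emph{proper} subgroup must be used, and the natural candidate is the generalized quaternion group $Q_q$ of order $4q$. Given a \partition\ instance $(a_1,\dots,a_k)$ with $M=\sum_i a_i$, I would pick $q=5^{t}$ for the least $t$ with $q>M$ (so $q$ is polynomial in $M$, hence binary–encodable in polynomially many bits), compute some $a\in\Z_q$ with $a^2\equiv-1$ (such $a$ exists since $-1$ is a quadratic residue modulo every power of $5$, and a witness can be found in \LOGSPACE{} by searching all of $\Z_q$ because $q$ is polynomially bounded), and set $R=\begin{pmatrix}1&1\\0&1\end{pmatrix}$, $S=\begin{pmatrix}a&0\\0&-a\end{pmatrix}\in\SL{2}{q}$. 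Then $\det S=1$, $S^2=-I$, and $SRS^{-1}=R^{-1}$; since $-R$ has order $2q$, the group $\gen{R,S}=\gen{-R,S}$ has order $4q$ and is isomorphic to $Q_q$. Passing $R,S$ as generators and $c_i:=R^{a_i}=\begin{pmatrix}1&a_i\\0&1\end{pmatrix}$ as coefficients, the $\gen{R,S}$–conjugacy class of $c_i$ is exactly $\{R^{a_i},R^{-a_i}\}$, so $\prod_i z_i^{-1}c_iz_i=1$ has a solution iff $\sum_i\varepsilon_ia_i\equiv 0\pmod q$ for some signs $\varepsilon_i\in\{\pm1\}$, and since $\lvert\sum_i\varepsilon_ia_i\rvert\le M<q$ this holds iff $\sum_i\varepsilon_ia_i=0$, i.e.\ iff the \partition\ instance is positive. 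The routine work is checking that all three families of embeddings are \LOGSPACE{} and verifying the group identities ($SRS^{-1}=R^{-1}$, $\lvert\gen{R,S}\rvert=4q$, $\gen{R,S}\cong Q_q$); the one place that needs an idea is recognising that $\SL{2}{q}$ has to be attacked through $Q_q$ rather than $D_m$.
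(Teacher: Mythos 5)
Your proposal matches the paper for part (a) and for the $\GL{n}{q}$, $\TL{n}{q}$, and $\SL{n}{q}$-with-$n\ge 3$ cases of part (b): permutation matrices for $S_n$ and $A_n$, and the padded dihedral copy $\gen{R\oplus I_{n-2},\;S'}$ inside $\TL{n}{q}$ (resp.\ $\mathrm{diag}(1,-1,-1,1,\dots,1)$ for $\SL{n}{q}$, $n\ge 3$). Your suspicion that $\SL{2}{q}$ is the delicate spot is well founded: the paper's own construction sets $S_{n-1,n-1}=S_{n,n}=-1$, which for $n=2$ yields $S=-I$, a central element, so $\gen{R,S}$ is abelian and not $D_q$. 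Since the only involutions of $\SL{2}{q}$ (for odd $q$) are $\pm I$, no dihedral group of order $\ge 6$ embeds there at all, and your move to a quaternion subgroup is the right idea. Concretely $S=\mathrm{diag}(a,-a)$ with $a^2\equiv -1\pmod q$ has determinant $1$, conjugates $R$ to $R^{-1}$, and squares to $-I$, and for odd $q$ the element $-R$ has order $2q$, so $\gen{R,S}\cong Q_q$; the conjugacy class of $R^{a_i}$ in $\gen{R,S}$ is $\{R^{\pm a_i}\}$, and the \textsc{Partition} reduction goes through exactly as for $D_q$ since $q>\sum_i a_i$.

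There is, however, a genuine gap in the execution: the claim that one can find $a$ with $a^2\equiv -1\pmod{5^t}$ \emph{in \LOGSPACE{} by exhaustively searching $\Z_q$ because ``$q$ is polynomially bounded''} is false. The \textsc{Partition} numbers $a_i$ are given in binary, so $M=\sum_i a_i$ and hence $q=5^t>M$ are exponential in the input size; only $\log q$ is polynomially bounded, so a brute-force search over $\Z_q$ is exponential-time. The square root can instead be produced in polynomial time by Hensel lifting a solution of $x^2\equiv -1\pmod 5$ (e.g.\ $x=2$) through $t-1=O(\log M)$ lifting steps, each a single arithmetic operation modulo the current power of $5$; this makes the whole $\SL{2}{q}$ reduction polynomial-time, which suffices for \NP-hardness (whether it can be forced into \LOGSPACE{} is a separate, more technical question that you should not assert without argument). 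With that correction your approach is sound, and in fact it repairs, rather than merely reproduces, the published argument for the $\SL{2}{q}$ subcase.
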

In particular, $\DPSE{(\GL{2}{q})_{q \in \N}}$ and $\DPSE{(\TL{2}{q})_{q \in \N}}$ are in \P but the corresponding problem in the subgroup input model is \NP-hard. 
\begin{proof}
(a) The groups $S_n$ and $A_n$ are both generated by two elements (see e.g.\ \cite{Bray11}). We can embed $S_n$ into $\GL{n}{q}$ and $A_n$ into $\SL{n}{q}$. By \cref{thm: DPforSn}, \cref{cor: SEAn} and  \cref{prop:inNP},  part (\ref{SubgrGLn}) follows.

(b) Let $R, S \in \TL{n}{q}$, such that all entries coincide with the  identity matrix except for $R_{1,n}=1$ and $S_{n-1,n-1}=S_{n,n}=-1$.  Then, $R,S \in \SL{n}{q} $ and $\gen{R,S}\cong D_q$. Hence, part (\ref{SubgrGLq}) follows from \cref{thm: DnNPhard} and   \cref{prop:inNP}. 
\qed\end{proof}

\subsection{The Heisenberg groups}
The \emph{generalized Heisenberg group} $H^{(p)}_n$ of dimension $n$ over $\Z_p$ is the group
\small
\begin{align}\label{Heisenberg}
    H^{(p)}_n=\left\lbrace
\begin{pmatrix}
1 & \alpha_1& a_2\\
0& I & \alpha_3\\
0& 0& 1
\end{pmatrix}~\Big|~\alpha^T_1, \alpha_3 \in \Z_p^{n-2}, a_2\in \Z_p\right\rbrace
\end{align}
\normalsize
where $\alpha^T$ denotes the transposed vector/matrix of $\alpha$. 
 Observe that $H_3^{(p)}=\UT{3}{p}$ and $H^{(p)}_n \leq \UT{n}{p}$ for each $n$ and $p$. 
The following lemma gives necessary and sufficient conditions for a solution of $\DPSE{H^{(p)}_n }$ to exist.

\begin{lemma}\label{lem: FormulaUTthree} Let $X_i, C_i \in H^{(p)}_n$ for $i \in \interval{1}{k}$. Write $A=\prod_{i=1}^k X_iC_iX_i^{-1}$, 
\small
\begin{align*}
X_i=\begin{pmatrix}
1 & \chi_i& z_i\\
0& I & \gamma_i\\
0& 0& 1
\end{pmatrix},  ~C_i=\begin{pmatrix}
1 & \zeta_i^{(1)}& c_i\\
0& I & \zeta_i^{(3)}\\
0& 0& 1
\end{pmatrix},
~A=\begin{pmatrix}
1 & \alpha^{(1)}& a\\
0& I & \alpha^{(3)}\\
0& 0& 1
\end{pmatrix}
\end{align*}
\normalsize
as in (\ref{Heisenberg}). 
Then, $\alpha^{(1)}= \sum_{i=1}^k \zeta^{(1)}_{i}$, $
\alpha^{(3)}=\sum_{i=1}^k \zeta^{(3)}_i $ and
\begin{align*}
a=  \sum_{i=1}^k(\chi_i\cdot \zeta^{(3)}_i-\zeta^{(1)}_i\cdot \gamma_i) +  \sum_{i=1}^k \sum_{h=1}^{i-1} \zeta^{(1)}_h\cdot\zeta^{(3)}_i+\sum_{i=1}^k c_i 
\end{align*}
with $"+"$ being the componentwise addition for (row or column) vectors  and $\cdot$ the product of the vectors seen as matrices. 
\smallskip

In particular, $(C_1, \ldots, C_k)\in\DPSE{ H^{(p)}_n}$ if and only if $\alpha^{(1)}=\alpha^{(3)}=0$ and one of the following conditions holds: 
\begin{itemize}
    \item at least one $\zeta^{(1)}_{i}$, $\zeta^{(3)}_{i}$ is non-zero, or
    \item  $\sum_{i=1}^k c_i =0$.
\end{itemize}
\end{lemma}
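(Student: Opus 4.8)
The plan is to obtain the explicit product formula by a direct matrix computation and then read off the solvability criterion from it.

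\emph{Step 1: conjugating a single factor.} First I would record the inverse
\[
X_i^{-1}=\begin{pmatrix}1 & -\chi_i & \chi_i\gamma_i-z_i\\ 0 & I & -\gamma_i\\ 0 & 0 & 1\end{pmatrix},
\]
(checked by multiplying with $X_i$), together with the multiplication rule: two elements with data $(\alpha_1,a_2,\alpha_3)$ and $(\beta_1,b_2,\beta_3)$ multiply to $(\alpha_1+\beta_1,\ a_2+b_2+\alpha_1\beta_3,\ \alpha_3+\beta_3)$, where $\alpha_1\beta_3$ is the scalar obtained from the row vector $\alpha_1$ and the column vector $\beta_3$. A short computation then gives
\[
X_iC_iX_i^{-1}=\begin{pmatrix}1 & \zeta_i^{(1)} & c_i+\chi_i\zeta_i^{(3)}-\zeta_i^{(1)}\gamma_i\\ 0 & I & \zeta_i^{(3)}\\ 0 & 0 & 1\end{pmatrix};
\]
note that the parameter $z_i$ drops out, so only $\chi_i$ and $\gamma_i$ are relevant.

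\emph{Step 2: the product.} I would prove the displayed formulas for $\alpha^{(1)},\alpha^{(3)},a$ by induction on $k$ using the multiplication rule: the $(1,2)$- and $(2,3)$-blocks simply add, yielding $\alpha^{(1)}=\sum_i\zeta_i^{(1)}$ and $\alpha^{(3)}=\sum_i\zeta_i^{(3)}$, while passing from the product of the first $i-1$ conjugates to the product of the first $i$ adds the cross term $\bigl(\sum_{h=1}^{i-1}\zeta_h^{(1)}\bigr)\zeta_i^{(3)}$ to the upper-right corner. Summing these contributions together with $\sum_i(c_i+\chi_i\zeta_i^{(3)}-\zeta_i^{(1)}\gamma_i)$ gives the claimed value of $a$.

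\emph{Step 3: the solvability criterion.} Now $(C_1,\dots,C_k)\in\DPSE{H^{(p)}_n}$ precisely when $A=I$ for some choice of the $X_i$, i.e.\ $\alpha^{(1)}=\alpha^{(3)}=0$ and $a=0$. Since $\alpha^{(1)}$ and $\alpha^{(3)}$ do not depend on the $X_i$, the equalities $\sum_i\zeta_i^{(1)}=0$ and $\sum_i\zeta_i^{(3)}=0$ are necessary; assume them. Put $S=\sum_{i=1}^k\sum_{h=1}^{i-1}\zeta_h^{(1)}\zeta_i^{(3)}+\sum_{i=1}^k c_i\in\Z_p$, a constant determined by the $C_i$, so that $a=0$ reads $\sum_i(\chi_i\zeta_i^{(3)}-\zeta_i^{(1)}\gamma_i)=-S$. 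If every $\zeta_i^{(1)}$ and $\zeta_i^{(3)}$ vanishes, the left-hand side is identically $0$ and $S=\sum_i c_i$, so a solution exists iff $\sum_i c_i=0$. Otherwise some component of some $\zeta_{i_0}^{(3)}$ (resp.\ $\zeta_{i_0}^{(1)}$) is nonzero, and choosing all $X_i=I$ except a suitable $\chi_{i_0}$ (resp.\ $\gamma_{i_0}$) makes $\chi_{i_0}\zeta_{i_0}^{(3)}$ (resp.\ $-\zeta_{i_0}^{(1)}\gamma_{i_0}$) equal to $-S$, so a solution always exists. This yields exactly the two stated alternatives. There is no genuine obstacle here; the only points requiring care are the bookkeeping with row/column vectors in the induction and the observation that the single scalar $S$ absorbs all the data not under our control.
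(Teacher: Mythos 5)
Your proof is correct and follows essentially the same route as the paper's: compute the single conjugate $X_iC_iX_i^{-1}$, prove the product formula by induction on $k$ using the Heisenberg multiplication rule, and then read off solvability from the fact that $\chi_i$ and $\gamma_i$ enter $a$ only linearly while $\alpha^{(1)},\alpha^{(3)}$ are independent of the $X_i$. Your Step 3, isolating the constant $S$ and adjusting a single $\chi_{i_0}$ or $\gamma_{i_0}$, spells out a detail the paper leaves terse, but it is the same argument.
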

\begin{proof}
We proof this by induction on $k$. For $k=1$ we have that

\small
\begin{align*}
X_1C_1X_1^{-1}=\begin{pmatrix}
1 & \zeta^{(1)}_1 &\chi_1\zeta^{(3)}_1 -\zeta^{(1)}_1\gamma_1+c_1\\
0 & 1 & \zeta^{(3)}_1 \\
0 & 0& 1
\end{pmatrix}.
\end{align*}
\normalsize
Assume that we already showed the claim for $k$. Then by induction we have for $a=  \sum_{i=1}^k(\chi_i\cdot \zeta^{(3)}_i-\zeta^{(1)}_i\cdot \gamma_i) +  \sum_{i=1}^k \sum_{h=1}^{i-1} \zeta^{(1)}_h\cdot\zeta^{(3)}_i+\sum_{i=1}^k c_i $ that 
\small
\begin{align*} 
\prod_{i=1}^{k+1} X_iC_iX_i^{-1}&=\begin{pmatrix}
1 & ~ \sum_{i=1}^k \zeta^{(1)}_i& a \\[2mm]
0 & I& \sum_{i=1}^k \zeta^{(3)}_i\\[2mm]
0 & 0& 1
\end{pmatrix} \cdot\begin{pmatrix}
1 & ~ \zeta^{(1)}_{k+1} &~ \chi_{k+1}\zeta^{(3)}_{k+1}-\zeta^{(1)}_{k+1}\gamma_{k+1}+c_{k+1}\\[2mm]
0 & I & \zeta^{(3)}_{k+1} \\[2mm]
0 & 0& 1
\end{pmatrix}\\[4mm]
&=\begin{pmatrix}
1 &~ \sum_{i=1}^{k+1} \zeta^{(i)}_1&  a' \\[2mm]
0 & I& \sum_{i=1}^{k+1} \zeta^{(i)}_3\\[2mm]
0 & 0& 1
\end{pmatrix}
\end{align*}
\normalsize
with 
\begin{align*}
a'&=\chi_{k+1}\zeta^{(3)}_{k+1}-\zeta^{(1)}_{k+1}\gamma_{k+1}+c_{k+1}+ \sum_{h=1}^k \left(\zeta^{(1)}_h \cdot\zeta^{(3)}_{k+1}\right)+a\\
 &\overset{I.H.}{=}\sum_{i=1}^{k+1}(\chi_i\zeta^{(3)}_i-\zeta^{(1)}_i\gamma_i)+  \sum_{i=1}^{k+1} \sum_{h=1}^{i-1} \zeta^{(h)}_1\cdot \zeta^{(3)}_i+\sum_{i=1}^{k+1} c_i. 
\end{align*}
As each component of $\chi_i$ and $\gamma_i$  appears only linearly in $a$, the equation $a=0$ can be solved under the conditions of the lemma.
\qed\end{proof}

We obtain the following theorem as an immediate consequence of \cref{lem: FormulaUTthree}:

\begin{theorem}\label{thm:heisenberg}
$\DPSE{(H_n^{(p)})_{(n,p) \in \N \times \prm}}$, $p$ given in binary, is in $\P$. 
\end{theorem}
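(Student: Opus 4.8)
The plan is to derive \cref{thm:heisenberg} directly from the characterization in \cref{lem: FormulaUTthree} by observing that every condition appearing there can be checked by a polynomial-time algorithm, taking care that $p$ is given in binary and that both the dimension $n$ and the prime $p$ are part of the input. Concretely, on input $n$ (in unary), $p$ (in binary), and matrices $C_1,\dots,C_k \in H_n^{(p)}$ presented as in \eqref{Heisenberg}, each $C_i$ is described by the pair of vectors $\zeta_i^{(1)} \in \Z_p^{n-2}$ (a row), $\zeta_i^{(3)} \in \Z_p^{n-2}$ (a column) together with the scalar $c_i \in \Z_p$; the whole input therefore has size polynomial in $n$, $k$, and $\log p$.

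The algorithm would first compute $\alpha^{(1)} = \sum_{i=1}^k \zeta_i^{(1)}$ and $\alpha^{(3)} = \sum_{i=1}^k \zeta_i^{(3)}$ as vectors in $\Z_p^{n-2}$; this is $O(kn)$ additions modulo $p$, each of cost polynomial in $\log p$. If either of these sums is non-zero, we reject, since by \cref{lem: FormulaUTthree} no solution exists. Otherwise, we test the two alternatives of the lemma: we check whether some $\zeta_i^{(1)}$ or $\zeta_i^{(3)}$ is non-zero (a scan over the at most $2k(n-2)$ coordinates), and we compute $\sum_{i=1}^k c_i \in \Z_p$ and test whether it is $0$. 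We accept if and only if $\alpha^{(1)} = \alpha^{(3)} = 0$ and (at least one $\zeta_i^{(1)}$ or $\zeta_i^{(3)}$ is non-zero, or $\sum_{i=1}^k c_i = 0$). Correctness is exactly the ``in particular'' part of \cref{lem: FormulaUTthree}, and the running time is clearly polynomial in the input size.

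There is essentially no hard part here: the substance is entirely contained in \cref{lem: FormulaUTthree}, whose formula for $a$ is linear in the entries $\chi_i,\gamma_i$ of the conjugating matrices, so that once the ``abelian part'' conditions $\alpha^{(1)}=\alpha^{(3)}=0$ are met, solvability of the single remaining equation $a = 0$ for the unknowns $\chi_i,\gamma_i,z_i$ reduces to the stated dichotomy. The only point requiring a word of care is that, because $p$ is given in binary, one cannot afford to enumerate $\Z_p$ or the group $H_n^{(p)}$; but this is not needed, as all the checks above are purely arithmetic and involve no search. Hence the proof is a one-line consequence, and we can simply write:

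\begin{proof}
By \cref{lem: FormulaUTthree}, $(C_1,\dots,C_k) \in \DPSE{H_n^{(p)}}$ if and only if $\sum_{i=1}^k \zeta_i^{(1)} = 0$ and $\sum_{i=1}^k \zeta_i^{(3)} = 0$ and, in addition, either some $\zeta_i^{(1)}$ or $\zeta_i^{(3)}$ is non-zero or $\sum_{i=1}^k c_i = 0$. Given the matrices $C_i$ as in \eqref{Heisenberg}, all these conditions only require computing sums of vectors and scalars modulo $p$ and comparing them with zero, which can be done in time polynomial in $n$, $k$ and $\log p$. Hence $\DPSE{(H_n^{(p)})_{(n,p) \in \N \times \prm}}$ is in $\P$.
\qed\end{proof}
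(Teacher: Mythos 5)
Your proof is correct and follows exactly the paper's approach: the paper also derives \cref{thm:heisenberg} as an immediate consequence of \cref{lem: FormulaUTthree}, and your algorithm merely makes explicit the (trivial) polynomial-time check of the conditions listed there.
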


Notice that $\mathcal{H}=(H_n^{(p)})_{(n,p) \in \N \times \prm}$  is a sequence of non-abelian matrix groups such that $\DPSE{\mathcal{H}} \in \P$ even if the dimension $n$ and the prime field $\Z_p$ are both part of the input.

\subsection{The sequence $(\UT{4}{p})_{p \in \prm}$}
In this section we show  $\DPSE{(\UT{4}{p})_{p \in \prm}} \in \P$. Observe that $H^{(p)}_4 \leq \UT{4}{p}$, so, in some sense, this is the largest sequence $\cG$ of non-abelian subgroups of $\GL{4}{p}$
for which we have shown $\DPSE{\cG}\in \P$.

 \begin{lemma}\label{prop: FormulaUTFour} Let $X_i, C_i \in \UT{4}{p}$ for $i \in \interval{1}{k}$. Write $D_i=X_iC_iX_i^{-1}$,  $A_i=\prod_{h=1}^iD_h$ and 
\small
\begin{align*}
X_i=\begin{pmatrix}
    1&x_i& w_i & u_i\\
    0&1&z_i &v_i \\
    0&0&1&y_i\\
    0&0&0&1
\end{pmatrix}, \hspace*{0.5cm} A_i=\begin{pmatrix}
1 & a^{(1)}_i & a^{(2)}_i & a^{(3)}_i\\[0.5mm]
0 & 1 & a^{(4)}_i & a^{(5)}_i \\[0.5mm]
0 & 0 & 1 & a^{(6)}_i\\[0.5mm]
0& 0& 0& 1
\end{pmatrix}. 
\end{align*}
The entries of $C_i$ and $D_i$ are denoted in the same way as the ones of $A_i$. Then, $d^{(j)}_i=c^{(j)}_i$ and $a^{(j)}_i=\sum_{h=1}^i c^{(j)}_h \text{ for } j \in \{1,4,6\}$ (in particular,  $a^{(j)}_0=0$), 
\begin{align*}
   d^{(2)}_i&= x_ic_i^{(4)} - c_i^{(1)}z_i + c_i^{(2)}, \qquad d^{(5)}_i= z_ic_i^{(6)} - c_i^{(4)}y_i + c_i^{(5)},\\
    d^{(3)}_i&= c_i^{(1)}y_iz_i - c_i^{(4)}x_iy_i + x_i c_i^{(5)} - c_i^{(1)}v_i + w_ic_i^{(6)}  - c_i^{(2)}y_i + c_i^{(3)},
\end{align*}
and 
\begin{align*}
   a^{(2)}_i&= \sum_{h=1}^i \left(d^{(2)}_h + a^{(1)}_{h-1} c^{(4)}_h\right), \qquad a^{(5)}_i= \sum_{h=1}^i \left( d^{(5)}_h + a^{(4)}_{h-1} c^{(6)}_h\right), \\
     a^{(3)}_i&= \sum_{h=1}^i \left(d^{(3)}_h+a^{(1)}_{h-1}d^{(5)}_h + a^{(2)}_{h-1}c^{(6)}_h \right).
\end{align*}
\normalsize
\end{lemma}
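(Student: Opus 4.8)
\medskip
\noindent\textbf{Proof plan.} The statement is a bookkeeping identity, so the plan is to establish it by a direct matrix computation followed by an induction on $i$. As a first step I would record the inverse of a general unitriangular $4\times 4$ matrix: writing $X_i = I + N_i$ with $N_i$ strictly upper triangular, we have $X_i^{-1} = I - N_i + N_i^2 - N_i^3$ since $N_i^4 = 0$, and expanding the powers of $N_i$ shows that the strictly-upper entries of $X_i^{-1}$ are $-x_i$, $x_iz_i - w_i$, $-x_iz_iy_i + x_iv_i + w_iy_i - u_i$, $-z_i$, $z_iy_i - v_i$, $-y_i$ in positions $(1,2),(1,3),(1,4),(2,3),(2,4),(3,4)$.

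Next I would expand $D_i = X_i C_i X_i^{-1}$ entry by entry. Writing $C_i = I + Q_i$, the strictly-upper part of $D_i$ equals $X_i Q_i X_i^{-1}$, and multiplying out this product of three matrices over the commutative ring $\Z_p$ yields the claimed formulas for $d^{(2)}_i$, $d^{(5)}_i$ and $d^{(3)}_i$. The identities $d^{(1)}_i = c^{(1)}_i$, $d^{(4)}_i = c^{(4)}_i$, $d^{(6)}_i = c^{(6)}_i$ either drop out of this same expansion or, more conceptually, follow because conjugation in $\UT{4}{p}$ acts trivially on the first superdiagonal (these three entries are the image of the element under the abelianization homomorphism $\UT{4}{p} \to \Z_p^3$). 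Consequently $a^{(j)}_i = \sum_{h=1}^i c^{(j)}_h$ for $j \in \{1,4,6\}$, since the first superdiagonal of a product of unitriangular matrices is the sum of the first superdiagonals.

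For the remaining entries of $A_i$ I would induct on $i$ via $A_i = A_{i-1} D_i$, with base case the empty product $A_0 = I$. The induction step uses the multiplication rule for two unitriangular $4\times 4$ matrices $A = I + M$, $D = I + P$: from $AD = I + M + P + MP$ one reads off that the $(1,3)$-entry of $AD$ is $a^{(2)} + a^{(1)}d^{(4)} + d^{(2)}$, the $(2,4)$-entry is $a^{(5)} + a^{(4)}d^{(6)} + d^{(5)}$, and the $(1,4)$-entry is $a^{(3)} + a^{(1)}d^{(5)} + a^{(2)}d^{(6)} + d^{(3)}$. Substituting $d^{(4)}_i = c^{(4)}_i$, $d^{(6)}_i = c^{(6)}_i$ together with the induction hypotheses $a^{(1)}_{i-1} = \sum_{h=1}^{i-1} c^{(1)}_h$ and $a^{(4)}_{i-1} = \sum_{h=1}^{i-1} c^{(4)}_h$ into these three identities collapses them to the stated sums for $a^{(2)}_i$, $a^{(5)}_i$ and $a^{(3)}_i$.

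I do not expect any conceptual obstacle; the only place where care is genuinely needed is the $(1,4)$-corner $a^{(3)}_i$, where three "degrees" of monomials in the entries of the $X_h$ interact simultaneously. In particular the summand $a^{(1)}_{h-1}d^{(5)}_h$ of $a^{(3)}_i$ must involve $d^{(5)}_h$ rather than $c^{(5)}_h$, which is exactly what the multiplication rule for $AD$ above produces once one resists the temptation to shortcut the induction. Everything else is a mechanical expansion of $4 \times 4$ products over $\Z_p$.
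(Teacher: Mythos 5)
Your proposal is correct and follows essentially the same structure as the paper's proof: compute the conjugate $D_i = X_iC_iX_i^{-1}$ by direct $4\times4$ matrix expansion (the paper simply asserts this "can be easily checked by hand," which your $I+N$ nilpotent decomposition and abelianization observation organize a bit more conceptually), then derive the formulas for $A_i$ by induction via $A_i = A_{i-1}D_i$ and the unitriangular multiplication rule $AD = I + M + P + MP$.
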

\begin{proof}
The entries of the matrices $D_i$ can be easily checked by multiplying the matrices by hand or using some computer algebra software. 
For $A_i$, we proceed by induction over $i$. For $i=1$ the claim follows because $a_0^{(j)}=0$ and $A_1=D_1$. 

\normalsize
Assume that we have already shown the formula for $A_i$. Then  for $A_{i+1}=A_i\cdot D_{i+1}$ we obtain 
\begin{align*}
&a^{(j)}_{i+1}=c_{i+1}^{(j)}+ \sum_{h=1}^{i} c_h^{(j)}=\sum_{h=1}^{i+1} c_h^{(j)} \qquad \text{ for } j \in \{1,4,6\},
\end{align*}
{\allowdisplaybreaks
\begin{align*}
a^{(2)}_{i+1}
&=d^{(2)}_{i+1}+a^{(1)}_id^{(4)}_{i+1}+\sum_{h=1}^{i}\left( d_h^{(2)}+ a^{(1)}_{h-1}c_h^{(4)}\right)
=\sum_{h=1}^{i+1} \left( d_h^{(2)}+ a^{(1)}_{h-1} c_h^{(4)} \right),\\
a^{(5)}_{i+1}
&=d^{(5)}_{i+1}+a_i^{(4)}d^{(6)}_{i+1}+\sum_{h=1}^i \left(d^{(5)}_h+a^{(4)}_{h-1}c^{(6)}_h\right) 
=\sum_{h=1}^{i+1} \left( d^{(5)}_h+a^{(4)}_{h-1}c^{(6)}_h \right),\\
a^{(3)}_{i+1}
&=d^{(3)}_{i+1}+a^{(1)}_id^{(5)}_{i+1}+a^{(2)}_id^{(6)}_{i+1}+\sum_{h=1}^i \left(d^{(3)}_h+a^{(1)}_{h-1}d^{(5)}_h+a^{(2)}_{h-1}c^{(6)}_h \right)\\
&=\sum_{h=1}^{i+1} \left( d^{(3)}_h+a^{(1)}_{h-1}d^{(5)}_h+a^{(2)}_{h-1}c^{(6)}_h \right).
\end{align*}
}
This shows the lemma. 
\qed\end{proof}

\begin{theorem}\label{thm:UTfour}
    $\DPSE{(\UT{4}{p})_{p \in \prm}}$ is in \P.
\end{theorem}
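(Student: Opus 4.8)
The plan is to read off from \cref{prop: FormulaUTFour} a system of polynomial conditions over $\Z_p$ that is satisfiable exactly when $(C_1,\dots,C_k)\in\DPSE{\UT{4}{p}}$ and to show it can be decided in polynomial time. Since $\UT{4}{2}$ is a fixed finite group, \cref{thm: finitegroup} handles $p=2$, so we may assume $p\ge 3$. As all diagonal entries are $1$, there is no determinant obstruction, and the entries $a^{(1)}_k,a^{(4)}_k,a^{(6)}_k$ of $A=\prod_i X_iC_iX_i^{-1}$ equal $\sum_i c^{(1)}_i$, $\sum_i c^{(4)}_i$, $\sum_i c^{(6)}_i$ and are independent of the conjugators; so $\sum_i c^{(1)}_i=\sum_i c^{(4)}_i=\sum_i c^{(6)}_i=0$ is a necessary condition that we check first. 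Assuming it holds, it remains to decide whether the conjugator parameters $x_i,z_i,y_i,w_i,v_i$ (the entry $u_i$ of $X_i$ plays no role) can be chosen with $a^{(2)}_k=a^{(5)}_k=a^{(3)}_k=0$. Here $a^{(2)}_k$ is linear in the $x_i,z_i$ and $a^{(5)}_k$ is linear in the $y_i,z_i$; the only nonlinear equation is $a^{(3)}_k=0$, whose higher-degree part consists solely of the bilinear summands $c^{(1)}_iy_iz_i$ and $-c^{(4)}_ix_iy_i$, and in which $v_i$ (resp.\ $w_i$) occurs only through the single summand $-c^{(1)}_iv_i$ (resp.\ $c^{(6)}_iw_i$).

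The first main case is that $c^{(1)}_{i_0}\ne 0$ for some $i_0$; the case that some $c^{(6)}_{i_0}\ne 0$ is symmetric, via the anti-automorphism of $\UT{4}{p}$ reflecting a matrix along its anti-diagonal together with a reordering of the factors by \cref{lem:reorder}. In this case $v_{i_0}$ appears in neither $a^{(2)}_k$ nor $a^{(5)}_k$ and in $a^{(3)}_k$ with nonzero coefficient $-c^{(1)}_{i_0}$. So I would set all $w_i$ and all $v_i$ with $i\ne i_0$ to $0$, solve the $2\times 3k$ linear system $a^{(2)}_k=a^{(5)}_k=0$ in the $x_i,y_i,z_i$ by Gaussian elimination, and, for any solution, solve the equation $a^{(3)}_k=0$, which is now linear in $v_{i_0}$. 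Since $a^{(2)}_k,a^{(5)}_k$ do not depend on any $w_i,v_i$, a solution exists in this case if and only if that linear system is consistent -- a polynomial-time test.

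The second main case is $c^{(1)}_i=c^{(6)}_i=0$ for all $i$. Then $z_i,w_i,v_i$ drop out of all three equations and only the $x_i,y_i$ remain, with $a^{(2)}_k=\sum_i c^{(4)}_ix_i+\sum_i c^{(2)}_i$, $a^{(5)}_k=-\sum_i c^{(4)}_iy_i+\sum_i c^{(5)}_i$ and $a^{(3)}_k=\sum_i\bigl(-c^{(4)}_ix_iy_i+c^{(5)}_ix_i-c^{(2)}_iy_i+c^{(3)}_i\bigr)$. If all $c^{(4)}_i=0$, this collapses to the explicit conditions $\sum_i c^{(2)}_i=\sum_i c^{(5)}_i=0$ together with solvability of the linear equation $a^{(3)}_k=0$ in the $x_i,y_i$ (i.e.\ some $c^{(2)}_i$ or $c^{(5)}_i$ nonzero, or $\sum_i c^{(3)}_i=0$), all checkable directly. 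Otherwise some $c^{(4)}_{i_0}\ne 0$, and the necessary condition $\sum_i c^{(4)}_i=0$ then forces $\lvert\{i:c^{(4)}_i\ne 0\}\rvert\ge 2$. Here I would first pick the $y_i$ subject to the single linear constraint $a^{(5)}_k=0$; with the $y_i$ fixed, $a^{(3)}_k$ becomes linear in the $x_i$, so $\{a^{(2)}_k=0,\ a^{(3)}_k=0\}$ is a system of two linear equations in the $x_i$, which (since the coefficient row $(c^{(4)}_i)_i$ is nonzero) is solvable unless its two coefficient rows are proportional and the right-hand sides are inconsistent. The remaining work is to show that the $y_i$ can be chosen so that the system is consistent -- either by making the two coefficient rows non-proportional, or, in the one remaining degenerate configuration where that is impossible (essentially $\lvert\{i:c^{(4)}_i\ne0\}\rvert=2$ with $c^{(5)}_i=0$ outside this set), by a single explicit consistency check that decides the instance.

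I expect the main obstacle to be precisely that last sub-case: showing that the bilinear equation $a^{(3)}_k=0$ can be met together with the two linear equations $a^{(2)}_k=a^{(5)}_k=0$. The facts that make it go through are the superdiagonal identity $\sum_i c^{(4)}_i=0$ (which prevents the rank-one obstruction from being localized at a single index) and the assumption $p\ge 3$; each of the finitely many degenerate configurations is then closed by a small rank/consistency computation over $\Z_p$. Since no square roots or quadratic-residue tests appear anywhere in this argument, the resulting decision procedure -- and the extraction of an explicit solution -- runs in deterministic polynomial time, unlike in the $\GL{2}{p}$ case.
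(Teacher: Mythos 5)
Your plan follows the same blueprint as the paper's proof: read off the three nontrivial equations from Lemma~\ref{prop: FormulaUTFour}, check the necessary conditions $\sum_ic^{(j)}_i=0$ for $j\in\{1,4,6\}$, and exploit the fact that $v_i,w_i$ occur only linearly and only in $a^{(3)}_k$, so that whenever some $c^{(1)}_i$ or $c^{(6)}_i$ is nonzero the problem reduces to consistency of the linear system $a^{(2)}_k=a^{(5)}_k=0$. That part is correct (and stated slightly more uniformly than the paper, which further splits on whether some $c^{(4)}_i\neq 0$ but reaches the same criterion); the anti-transpose symmetry you invoke to cover $c^{(6)}$ is also fine.

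The gap is in the remaining case $c^{(1)}_i=c^{(6)}_i=0$ for all $i$ with some $c^{(4)}_{i_0}\neq 0$. Your plan is to fix the $y_i$ on the hyperplane $a^{(5)}_k=0$ and then test consistency of the resulting $2\times k$ linear system in the $x_i$; but, as you concede, the two coefficient rows $(c^{(4)}_i)_i$ and $(-c^{(4)}_iy_i+c^{(5)}_i)_i$ can be forced proportional for \emph{every} admissible choice of $y$, and you dispose of that branch with only an informal characterization (``essentially $|\{i:c^{(4)}_i\neq 0\}|=2$ with $c^{(5)}_i=0$ outside this set'') and an unproved assertion (``a single explicit consistency check''). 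That branch does need an actual argument -- eliminating the proportionality parameter and exhibiting the residual constant condition -- so as written the decision procedure is not complete. The paper avoids the whole issue by a cleaner elimination: it solves $a^{(2)}_k=0$ and $a^{(5)}_k=0$ for $x_{i_0}$ and $y_{i_0}$, substitutes into $a^{(3)}_k=0$, and observes that the result is an affine-bilinear form $\sum_{i,j}\alpha_{ij}x_iy_j+\sum_i(\beta_ix_i+\delta_iy_i)+\zeta$ with \emph{no} $x_ix_j$ or $y_iy_j$ terms. Such an equation is \emph{always} solvable over $\Z_p$ once some $\alpha_{i_0j_0}\neq 0$ (zero out the other variables, pick $y_{j_0}$ with $\alpha_{i_0j_0}y_{j_0}+\beta_{i_0}\neq 0$, and solve for $x_{i_0}$), and is a single linear equation otherwise; either way, a direct polynomial-time test. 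This works uniformly over every $\Z_p$ including $p=2$, so neither your separate appeal to Theorem~\ref{thm: finitegroup} for $p=2$ nor the hypothesis $p\geq 3$ is actually needed, and the identity $\sum_ic^{(4)}_i=0$, which you flag as the fact ``that makes it go through,'' plays no role in the paper's finish beyond the initial feasibility check.
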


\begin{proof}
Let $X_i, C_i \in \UT{4}{p}$ for $i \in \interval{1}{k}$ as in \cref{prop: FormulaUTFour}.
We first check if  $\sum_{i=1}^k c_i^{(j)}=0$ for $j \in \{1,4,6\}$.
If this is not the case, then $(C_1, \ldots, C_k) \notin \DPSE{\UT{4}{p}}$ by \cref{prop: FormulaUTFour}.
Otherwise, we have to solve the following system of equations: 
\begin{align*}
    \sum_{i=1}^k \left(c^{(4)}_i x_i-c^{(1)}_iz_i+c^{(2)}_i +c_i^{(4)} a^{(1)}_{i-1} \right)&=0\\
       \sum_{i=1}^k \left(c^{(6)}_iz_i-c^{(4)}_iy_i+c^{(5)}_i+ c_i^{(6)} a^{(4)}_{i-1}\right)&=0\\
 \sum_{i=1}^k\Big( c^{(1)}_i(y_iz_i-v_i)-c^{(4)}_ix_iy_i+c^{(5)}_ix_i+c^{(6)}_iw_i-c^{(2)}_iy_i\qquad& \\{}+ c^{(3)}_i+ a^{(1)}_{i-1} \left(c^{(6)}_iz_i-c^{(4)}_iy_i+c^{(5)}_i\right)+c_i^{(6)}a^{(2)}_{i-1}\Big)&=0\qquad
\end{align*}
Note that $v_i,w_i,x_i,y_i,z_i$ are variables, $a^{(2)}_{i-1}$ also contains variables and  $a^{(1)}_{i-1}$ and $a^{(4)}_{i-1}$ are constants. 
For each $i$, $v_i$ and $w_i$ only appear linearly and only in the third equation. We have no restrictions on the variables, so if at least one  $c^{(1)}_i$ or $c^{(6)}_i$ and at least one $c^{(4)}_i$ is non-zero, we can solve the system.
If at least one $c^{(1)}_i$ or $c^{(6)}_i$ is non-zero but $c^{(4)}_i=0$ for all $i$ we need to check if the linear system consisting of the first two equations has a solution. This can be done in polynomial time (see e.g.\ \cite{KannanB79}). \\

Hence, from now on we assume that $c^{(1)}_i=c^{(6)}_i=0$ for all $i$.
We obtain the following system: 
\begin{align*}
&\sum_{i=1}^k c_i^{(4)}x_i+c^{(2)}_i=0\\
&\sum_{i=1}^k -c_i^{(4)}y_i+c^{(5)}_i=0\\
&\sum_{i=1}^k -c_i^{(4)}x_iy_i+c_i^{(5)}x_i-c_i^{(2)}y_i+c^{(3)}_i=0
\end{align*} 
We can assume that $c_i^{(4)}\neq 0$ for some $i$ because otherwise, the system is linear. W.l.o.g. $i=1$. 
Hence, the first two equations are equivalent to 

\begin{align*}x_1 &= \frac{-1}{c_1^{(4)}} \left(c_1^{(2)} + \sum_{i=2}^k \left( c_i^{(4)}x_i + c_i^{(2)}\right) \right)\qquad  \text{ and }\\ y_1 &= \frac{1}{c_1^{(4)}}\left(c_1^{(5)} + \sum_{i=2}^k \left( -c_i^{(4)} y_i+c_i^{(5)}\right)\right).
\end{align*}
By plugging this into the third equation, it remains to solve an equation of the form 
 
\begin{align} \label{SolvEqUTFour}
  \sum_{2\leq i, j \leq k} \alpha_{ij}x_iy_j+\sum_{i=2}^{k}( \beta_ix_i+\delta_iy_i)+\zeta=0
\end{align}
for suitable constants $\alpha_{ij},\beta_i$, $\delta_i$ and $\zeta$. Note that we do not get terms of the form $x_ix_j$. 
If $\alpha_{ij}=0$ for all $ i, j $, (\ref{SolvEqUTFour}) is a linear equation. Thus, we assume that there are $i_0,j_0$ with $\alpha_{i_0,j_0}\neq 0$. By setting $x_i=y_j=0$ for all $i \neq i_0$, $j \neq j_0$, we obtain the equation
\begin{align*}
\alpha_{i_0,j_0}x_{i_0}y_{j_0}+\beta_{i_0}x_{i_0}+\delta_{i_0}y_{i_0}+\zeta=0.
\end{align*}
Now, we choose $y_{j_0}$ such that $\alpha_{i_0,j_0}\cdot y_{j_0} +\beta_{i_0} \neq 0$. Then, 
\begin{align*}
    x_{i_0}=\frac{-\zeta-\delta_{i_0}y_{i_0}}{\alpha_{i_0,j_0}\cdot y_{j_0} +\beta_{i_0} }
\end{align*}
is a solution.
So also in this case we can solve the equation. This shows the proposition. 
\qed \end{proof}

\section{The groups $\Z_m^k\rtimes C_2$} \label{sec: grpZkC}
In this section we will show that there exist families of groups $\cG=(G_n)_{n \in \N}$ and 
$\cH=(H_n)_{n \in \N}$ such that $H_n \leq G_n$ for all $n$ and $\DPSE{\cG} \in \P$ (even in \ACC) but $\DPSE{\mathcal{H}}$ is \NP-hard. So we cannot expect that $\DPSE{\mathcal{G}} \in \P$ implies $\DPSE{\mathcal{H}} \in \P$.

As before let $C_2=\{\pm 1\}$. Then $C_2$ acts on $\Z_m$ by $(x,a) \mapsto xa$. The corresponding semi-direct product $\Z_m \rtimes C_2$ is precisely the dihedral group $D_m$.
We can define also the semi-direct product  $\Z_m^k\rtimes C_2=\{(a,x) ~|~a \in \Z^k_m, x=\pm 1\}$ with $C_2$ operating componentwise on $\Z_m^k$  meaning that the multiplication is given by 
\begin{align}\label{multplZkC}
    ((a_1, \ldots, a_k),x)\cdot ((b_1, \ldots, b_k),y)=((a_1 + x \cdot b_1, \ldots, a_k+ x \cdot b_k), x \cdot y).
\end{align}
The neutral element is $\smash{((\underbrace{0, \ldots, 0}_{k \text{ times }}),1)}$, which we also denote as $(0,1)$. We will show the following:

\begin{theorem} \label{prop: ZmC2}
Let $m = 3$ or $m \geq 5$ be fixed. Then $\DPSE{(\Z_m^{k} \rtimes C_2)_{k \in \N}}$ is \NP-complete.

\end{theorem}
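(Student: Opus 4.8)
The plan is to establish \NP-completeness in the usual two steps. Membership in \NP\ is immediate: an element of $\Z_m^k\rtimes C_2$ has a description of size $O(k)$ and the group operation from \eqref{multplZkC} is polynomial-time computable, so on input of a spherical equation one simply guesses a solution $z_1,\dots,z_k$ and verifies it (equivalently, this group embeds into upper triangular matrices over $\Z_m$, so \cref{prop:inNP} applies). The interesting direction is \NP-hardness, and I would prove it by a \LOGSPACE-reduction from \esc, using the version — which the excerpt records as still being \NP-hard — in which every element of the ground set lies in at most three of the given sets.

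The structural observation driving the reduction is that a ``rotation'' $(a,1)\in\Z_m^k\rtimes C_2$ has conjugacy class exactly $\{(a,1),(-a,1)\}$: writing $z=(h,\gamma)$, a one-line computation from \eqref{multplZkC} gives $z^{-1}(a,1)z=(\gamma a,1)$, and $h$ plays no role. Consequently, for rotation constants $c_i=(v_i,1)$ and a rotation $c=(w,1)$, the equation $\prod_{i=1}^{\ell}z_i^{-1}c_iz_i=c$ — which by the discussion around \eqref{AlternativeSE} is equivalent to a genuine spherical equation, still with rotation constants only — is solvable if and only if there are signs $\gamma_1,\dots,\gamma_\ell\in\{\pm1\}$ with $\sum_{i=1}^{\ell}\gamma_iv_i=w$ in $\Z_m^k$. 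Given an \esc instance with ground set $\{1,\dots,k\}$ and sets $A_1,\dots,A_\ell$, I would take the dimension to be $k$, let $v_i$ be the indicator vector of $A_i$ and $c_i=(v_i,1)$, and set $c=(w,1)$ with $w_e=(2-n_e)\bmod m$, where $n_e\le 3$ is the number of sets containing $e$. This is clearly \LOGSPACE-computable, and the correctness claim is that the equation is solvable iff the \esc instance has an exact cover.

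For the correctness analysis, given signs $\gamma_i$ realising $\sum_i\gamma_iv_i=w$, let $I=\{i:\gamma_i=1\}$ and $p_e=|\{i\in I: e\in A_i\}|$; then the $e$-th coordinate of the sum is $2p_e-n_e$, so the equation says $2p_e-n_e\equiv 2-n_e\pmod m$, i.e.\ $2(p_e-1)\equiv 0\pmod m$. Writing $m'=m/\gcd(2,m)$, this is equivalent to $p_e\equiv 1\pmod{m'}$, and since $0\le p_e\le n_e\le 3$ it forces $p_e=1$ as soon as $m'\ge 3$; conversely an exact cover $I$ gives $p_e=1$ for all $e$, whence the assignment $\gamma_i=1$ ($i\in I$), $\gamma_i=-1$ ($i\notin I$) works, realised by $z_i=(0,\gamma_i)$. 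The hypothesis ``$m=3$ or $m\ge 5$'' is exactly the condition $m'\ge 3$, so this is precisely where it enters. The main point to get right is this modular step: one must check that $m=4$ (where $m'=2$, so $p_e\in\{1,3\}$ is permitted) genuinely breaks the reduction, that the ``at most three sets'' restriction of \esc is what keeps $n_e$ below $1+m'$, and that the boundary cases $m=3$ and $m=6$ are still fine because $n_e\le 3$. A minor point, already flagged above, is to justify reducing to the form \eqref{AlternativeSE} rather than to a literal $\prod z_i^{-1}c_iz_i=1$; this is harmless since the associated genuine spherical equation retains only rotation constants. Everything else is bookkeeping.
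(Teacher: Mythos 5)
Your proof is correct, and the hardness reduction is genuinely different from — and somewhat slicker than — the one in the paper. Both reduce from \esc in its ``each element lies in at most three sets'' form, and both ultimately hinge on the same modular fact: with $0\le p_e\le 3$, the congruence $2(p_e-1)\equiv 0\pmod m$ forces $p_e=1$ precisely when $m/\gcd(2,m)\ge 3$, which is where the hypothesis ``$m=3$ or $m\ge 5$'' enters. Where you diverge is in how you reach that congruence. The paper works in $\Z_m^{k+\ell}\rtimes C_2$, introduces $2\ell$ constants (two copies of each indicator vector $v_i$, the second copy carrying a ``tag'' $1$ in an extra coordinate $k+i$), sets the target to $((2,\dots,2,1,\dots,1),1)$, and uses the tag coordinates to pin the second batch of conjugators to the identity sign; the two copies of $v_i$ then either reinforce (giving $2v_i$) or cancel, and the $n_e$-terms disappear. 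You instead stay in $\Z_m^k\rtimes C_2$ with just $\ell$ constants $c_i=(v_i,1)$ and absorb the $n_e$-dependence directly into the target via $w_e=2-n_e$, after which $\sum_i\gamma_i v_{i,e}=2p_e-n_e$ immediately gives $2p_e\equiv 2\pmod m$. Your version has a smaller group, fewer constants, no need for the tag trick, and makes the role of the hypothesis on $m$ transparent. The only point to be scrupulous about — and you already are — is justifying the passage from the form $\prod z_i^{-1}c_iz_i=c$ of \eqref{AlternativeSE} to a literal spherical equation; the standard conversion adds $c^{-1}=(-w,1)$, still a rotation, so nothing breaks. Membership in \NP\ is handled the same way in both.
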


Before we prove \cref{prop: ZmC2} we explain one of its consequences.
Consider the group $(\Z_m \rtimes C_2)^k$ with the operation of $C_2$ on $\Z_m$ defined as above. So the group multiplication is defined as follows:
\begin{align*} 
    ((a_1,x), \ldots, (a_k,x)) &\cdot ((b_1,y), \ldots, (b_k,y))\\ &\quad =((a_1+x \cdot b_1, x \cdot y), \ldots, (a_k+x \cdot b_k, x \cdot y)).
\end{align*}

We can embed the group $\Z^k_m \rtimes C_2$ into $(\Z_m \rtimes C_2)^k$ via \[((a_1, \ldots, a_k),x)\mapsto ((a_1,x), \ldots, (a_k,x)).\] 
Because $m$ is fixed,  $\Z_m \rtimes C_2$ and $\Z_m$ are fixed finite groups. Thus, according to \cref{lem: DirectProd}, $\DPSE{((\Z_m \rtimes C_2)^k)_{k \in \N}}$ and $\DPSE{(\Z_m^{k})_{k \in \N}}$ are in \P for $k$ given in unary. So we have the following corollary:

\begin{corollary} 
\label{prop: NPhardSubclass }
There exist sequences of groups $\mathcal{G'}=(G'_n)_{n\in \N}$, $\cH=(H_n)_{n\in \N}$ and $\cG=(G_n)_{n\in \N}$ such that $G'_n\leq H_n \leq G_n$ for all $n \in \N$,  $\DPSE{\mathcal{G'}}$ and $\DPSE{\mathcal{G}}$ are in \P but $\DPSE{\mathcal{H}}$ is \NP-complete. 
\end{corollary}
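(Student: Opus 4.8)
The plan is to make the three witness families explicit, nest them by hand, and then read off the complexities from \cref{prop: ZmC2} and \cref{lem: DirectProd}. Fix $m=3$ or $m\ge 5$ and put
\[
G_n' = \Z_m^{\,n},\qquad H_n = \Z_m^{\,n}\rtimes C_2,\qquad G_n = (\Z_m\rtimes C_2)^{n},
\]
all with $n$ given in unary and elements encoded as $n$-tuples over the fixed finite groups $\Z_m$, resp.\ $\Z_m\rtimes C_2$. The inclusion $G_n'\le H_n$ is the index-two subgroup $a\mapsto(a,1)$ (the kernel of the sign homomorphism $H_n\to C_2$). The inclusion $H_n\le G_n$ is the ``diagonal'' map $((a_1,\dots,a_n),x)\mapsto((a_1,x),\dots,(a_n,x))$; using \eqref{multplZkC} together with the componentwise multiplication in $(\Z_m\rtimes C_2)^n$ one checks directly that this is an injective homomorphism. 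Both embeddings are computable in logarithmic space and respect the chosen input encodings, so each of the three satisfiability problems is, up to a trivial reduction, a restriction of the next.

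It then remains to collect the three complexity facts. Since $m$ is a constant, $\Z_m$ and $\Z_m\rtimes C_2$ are fixed finite groups, so \cref{lem: DirectProd} gives $\DPSE{\mathcal{G}'}\in\ACC\subseteq\P$ and $\DPSE{\mathcal{G}}\in\ACC\subseteq\P$. For the middle family, \cref{prop: ZmC2} states precisely that $\DPSE{\mathcal{H}}=\DPSE{(\Z_m^{\,n}\rtimes C_2)_{n\in\N}}$ is \NP-complete; membership in \NP can also be seen from \cref{prop:inNP} after embedding $H_n$ block-diagonally into a matrix group via $\Z_m\rtimes C_2\hookrightarrow\GL{2}{m}$. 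Combining the three items yields the statement, and in particular shows that \NP-hardness of the Diophantine problem for spherical equations transfers neither to subgroups nor to supergroups.

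The only genuinely substantial ingredient is \cref{prop: ZmC2}; everything in the corollary itself is bookkeeping. Were that theorem not already available, the obstacle would be the \NP-hardness reduction into spherical equations over $\Z_m^{\,n}\rtimes C_2$. The natural route mirrors the dihedral case of \cref{thm: DnNPhard}: conjugation of a constant $(c,\delta)$ by $(h,\gamma)$ acts on the $\Z_m^{\,n}$-part affinely, multiplying it by $\gamma=\pm1$ and shifting by a multiple of $1-\delta$, so when all constants lie in $\Z_m^{\,n}$ (i.e.\ all $\delta_i=1$) a spherical equation is solvable exactly when $\sum_i\varepsilon_i c_i=0$ in $\Z_m^{\,n}$ for some signs $\varepsilon_i\in\{\pm1\}$---a \partition/\threepart-type condition. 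One would encode an instance of such a problem into the coordinates, the hypothesis $m=3$ or $m\ge 5$ being exactly what keeps this signed-sum condition rich enough to be \NP-hard (for $m=2$ it is vacuous). Within the proof of the corollary, the one point to be careful about is that the two embeddings are compatible with the respective input models, so that the nested family $G_n'\le H_n\le G_n$ together with the claimed complexity gap is a meaningful statement.
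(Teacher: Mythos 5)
Your proof is correct and essentially identical to the paper's: you use the same three witness families $G_n'=\Z_m^n \le H_n=\Z_m^n\rtimes C_2 \le G_n=(\Z_m\rtimes C_2)^n$, the same diagonal embedding, and the same two complexity ingredients (\cref{lem: DirectProd} for the outer families, \cref{prop: ZmC2} for the middle one).
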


\begin{proof}[of \cref{prop: ZmC2}]
Membership in \NP is by \cref{prop:inNP} (note that $\Z_m \rtimes C_2$ embeds into 2-by-2 matrices as described in \cref{sec: Dihrgrps}; hence, $\Z_m^k \rtimes C_2 \hookrightarrow (\Z_m^{k})_{k \in \N}$ can be embedded into $2m$-by-$2m$ matrices).

To show \NP-hardness, we reduce from \esc (see \cref{sec:NP}). 
Let $A_1, \ldots, A_\ell $ be subsets of $\interval{1}{k}$. We consider the group \[H=\Z^{k+\ell}_m\rtimes C_2.\]
Write $a_{i,j}$ for the $j$-th entry of $a_i \in  \Z^{k+\ell}_m$. We define $2\ell$ elements $c_i=(a_i,1) \in H$.  For $i \in \interval{1}{\ell}$, $j \in \interval{1}{k}$ we set $a_{i,j}=a_{\ell+i,j}=1$ if $j \in A_i$, $a_{\ell+i,k+i}=1$ and $a_{i,j}=0$ otherwise. 
We show that the equation 
\begin{align} \label{SEWP}
    \prod_{i = 1}^{2\ell} z_ic_iz_i^{-1}=((\underbrace{2, \ldots, 2}_{k \text{ times } },\underbrace{1, \ldots, 1}_{\ell \text{ times }}),1)
\end{align}
has a solution if and only if $\interval{1}{k}$ together with $A_1, \ldots, A_\ell$ is a positive instance of  \esc. 
Let  $\beta=(0,-1)$. We will use the observation that 
\begin{align} \label{cancel}
    \beta c_{i} \beta^{-1}c_{\ell+i} = ((\underbrace{0, \dots, 0}_{\mathclap{k + i-1 \text{ times}}},1,0,\dots,0),1).
\end{align}
Let $I \subseteq \interval{1}{\ell} $ such that $\{A_{i}\mid i\in I\}$ is an exact covering. By the choice of the $c_i$ we obtain that \[ \prod_{i \in I} c_{i}c_{\ell+i} = ((\underbrace{2, \ldots, 2}_{k \text{ times }},b_1, \ldots, b_\ell),1),\] 
with $b_i=1$ for $i \in I$  and $b_i=0$ otherwise. 
Now, (\ref{cancel}) implies  \[\prod_{i \in I} c_{i} \cdot c_{\ell+{i}} \cdot \prod_{i \not\in I}\beta c_{i} \beta^{-1} \cdot c_{\ell+i} = ((\underbrace{2, \ldots, 2}_{k \text{ times } },\underbrace{1, \ldots, 1}_{\ell \text{ times }}),1).\] 
 Thus, according to \cref{lem:reorder} there exists a solution to (\ref{SEWP}).

Now assume that (\ref{SEWP}) has a solution. Because $(c,1)^{(a,x)}=(c,1)^{(0,x)}$ there is a solution such that $z_i=(0,\pm 1)$ for all $i$. We have $z_{\ell+j}=(0,1)$ for all $j\in \interval{1}{\ell}$, as otherwise we get a $-1$ in one of the last $\ell$ components. 
Let $I=\Set{i \in \interval{1}{\ell}}{z_i=(0,1)}$.  Again by (\ref{cancel}),  it follows that $\sum_{i \in I}(a_{i,j}+a_{\ell+i,j})= 2$ for all $j\in \interval{1}{k}$ (be aware that this is in $\Z_m$). Now, it remains to combine the following observations 
\begin{itemize}
    \item $a_{i,j}+a_{\ell+i,j}\in \{0,2\}$ for all $i\in I$ and  $j \in \interval{1}{k}$,
    \item each $j$ appears in at most three sets $A_i$ meaning that  for each $j$ we have $\abs{\{ i \in \interval{1}{\ell} \mid a_{i,j}+a_{\ell+i,j} = 2 \}}  \leq 3$), and
    \item  $m=3 $ or $m\geq 5$
\end{itemize} 
to conclude that for each $j$ there is exactly one $i \in I$ with  $a_{i,j}=1$. Thus the $A_{i}$ for $i \in I$ are  an exact covering of $\interval{1}{k}$. 
\qed\end{proof}

\section{Conclusion and open questions}

As one of our main results, we have shown that $\DPSE{(\GL{2}{p})_{p \in \prm}}$ is in \P.  Our result about $(\UT{4}{p})_{p \in \prm}$ shows that also in dimension bigger than two there are sequences of non-abelian matrix groups such that $\DPSE{\cG} \in \P$. With the the generalized Heisenberg groups we have found a sequence of matrix groups for which we could show that  $\DPSE{\cG} \in \P$  even if the dimension is part of the input too. 

The dihedral groups as well as the groups $S_n$ and $A_n$ also can be seen as matrix groups. So, with this embedding, we have sequences of matrix groups  with \NP-hard Diophantine problem for spherical equations. Indeed, \cref{sec: grpZkC} shows that \NP-hardness does not necessarily transfer to super- nor sub-groups. So we are left with the following open questions:

\begin{itemize}
\item What is the complexity of $\DPSE{(\GL{n}{p})_{p \in \prm}}$ for $n \geq 3$?
We conjecture that for every $n$ there is a number $f_n$ such that every spherical equation over $\GL{n}{p}$ consisting of at least $f_n$ non-scalar matrices has a solution~-- so for each fixed $n$ the problem would be still solvable in polynomial time. However, this might change if the dimension $n$ is part of the input.

\item  What about other sequences of matrix groups in higher dimensions, like for example for $(\UT{5}{p})_{p \in \prm}$?

\item What is the role of prime vs.\ arbitrary integer as modulus? In particular, is $\DPSE{(D_p)_{p \in \prm}}$ \NP-hard? An affirmative answer to this question seems very unlikely without solving deep number-theoretic questions (see \cref{rem: CramerforET}). 

On the other hand, is it possible to extend the polynomial-time algorithm for $\DPSE{(\ET{2}{p})_{p \in \prm}}$ to $\DPSE{(\ET{2}{n})_{n \in \N}}$? A possible obstacle here might be that one might have to factor $n$ here, but it also seems plausible that factoring can be circumvented for the decision variant (as opposed to the variant where we want to compute a solution). The same thoughts apply to $(\TL{2}{n})_{n \in \N}$.

    \item What makes spherical equations difficult? Clearly the groups must be non-abelian~-- but on the other hand the example of $\GL{2}{p}$ shows that ``too far from abelian'' might be also easy.
\end{itemize}

\newcommand{\Ju}{Ju}\newcommand{\Ph}{Ph}\newcommand{\Th}{Th}\newcommand{\Ch}{Ch}\newcommand{\Yu}{Yu}\newcommand{\Zh}{Zh}\newcommand{\St}{St}\newcommand{\curlybraces}[1]{\{#1\}}

\end{document}